\DeclareSymbolFont{cyrletters}{OT2}{wncyr}{m}{n}
\DeclareFontFamily{OT1}{rsfs}{}
\newcommand{\Z}{\mathbf{Z}}
\newcommand{\C}{\mathbf{C}}
\newcommand{\SL}{\mathrm{SL}}
\newcommand{\R}{\mathbf{R}}
\newcommand{\g}{\mathfrak{g}}
\newcommand{\kg}{\mathfrak{k}}
\newcommand{\ag}{\mathfrak{a}}
\newcommand{\pg}{\mathfrak{p}}
\newcommand{\nn}{\mathfrak{n}}
  \newcommand{\D}{\mathbf{D}}
\DeclareFontShape{OT1}{rsfs}{n}{it}{<-> rsfs10}{}
\DeclareMathAlphabet{\mathscr}{OT1}{rsfs}{n}{it}
\newcommand{\vol}{\mathrm{vol}}
\newtheorem{theorem}{Theorem}
\newtheorem{conjecture}[theorem]{Conjecture}
\newtheorem{prop}[theorem]{Proposition}
\newtheorem{lem}[theorem]{Lemma}
\theoremstyle{definition}
\newtheorem{problem}{Problem}
\newtheorem{definition}[theorem]{Definition}
\newtheorem*{unremark}{Remark}
\definecolor{darkgreen}{RGB}{0,150,0}
\newcommand{\N}{\mathbf{N}}
\newcommand{\SO}{\mathrm{SO}}
\newcommand{\waveker}{{\rho_{\Gamma\backslash G}(k_t)}}
\numberwithin{equation}{section}
\title{Eigenfunctions and Random Waves in the Benjamini-Schramm limit}
\author{Miklos Abert}
\address {Renyi Institute of Mathematics \\
13-15 Realtanoda utca, 
1053 Budapest, Hungary\\}
\email{abert.miklos@renyi.mta.hu}
\author{Nicolas Bergeron}
\address{ENS / PSL University, D\'epartement de Math\'ematiques et Applications, F-75005, Paris, France}
\email{nicolas.bergeron@ens.fr}
\urladdr{https://sites.google.com/view/nicolasbergeron/accueil}
\author{Etienne Le Masson}
\address{CY Cergy Paris Universit\'e, AGM, 2 av. Adolphe Chauvin, 95302 Cergy-Pontoise Cedex, France}
\email{etienne.le-masson@cyu.fr}
\begin{document} 

\maketitle

\begin{abstract}
We investigate the asymptotic behavior of eigenfunctions of the Laplacian on Riemannian
manifolds. We show that Benjamini-Schramm convergence provides a
unified language for the level and eigenvalue aspects of the theory.
As a result, we present a mathematically precise formulation of
Berry's random wave conjecture for a compact negatively curved manifold and
formulate a Berry-type conjecture for sequences of locally symmetric
spaces. We prove some weak versions of these conjectures. Using
ergodic theory, we also analyze the connections of these conjectures
to Quantum Unique Ergodicity.
\end{abstract}

\section{Introduction}

\subsection*{Berry's random wave conjecture and Benjamini-Schramm convergence}

Since the seminal work of Berry \cite{Berry}, eigenfunctions of the Laplacian in chaotic settings are believed to behave like Gaussian random waves, in the large eigenvalue limit. 

This has been mostly addressed numerically, however in \cite[Section 5]{Hejhal} the conjectural randomness is quantified by studying the amplitude distribution of high energy eigenfunctions on a closed Riemanniann manifold $M$. 
Viewing $(M, d\mathrm{vol}_M)$ as a probability space each eigenfunction indeed yields an $\R$-valued random variable, and  
the Hejhal--Rackner random wave conjecture asserts that this random variable converges in a suitable sense to the normal distribution with mean $0$ and variance $1/\sqrt{\mathrm{vol}(M)}$.
Here `suitable sense' can have several different meanings, the weakest is the convergence in distribution, or weak convergence, but one can also ask for convergence of moments. These give respectively the Gaussian distribution conjecture and the Gaussian moments conjecture, see \cite[\S 1.2.2 \& 1.2.3]{Humphries}. These conjectures focus only on the values of the eigenfunctions and essentially forget their `shape'.  

We propose below a new formulation of the random wave conjecture that takes into account both the shape of the eigenfunction and its distribution of values. This makes heavy use of a notion of Benjamini--Schramm (BS) sampling in the general Riemannian setting. In Section \ref{S:BS} we first recall the definition \cite{AbertBiringer} of BS convergence for sequences $(M_n )$ of compact connected complete Riemannian $d$-manifolds. A limit point for this convergence is a probability measure on the space of pointed complete Riemannian $d$-manifolds that encodes what the geometry of $M_n$, for large $n$, looks like near randomly chosen points. We then extend the notion of BS convergence to sequences $(M_n , \phi_n )$ of manifolds equipped with a smooth function. A limit point is then a probability measure on the space of pointed complete Riemannian $d$-manifolds equipped with a smooth function.

Now, for a $d$-dimensional Riemannian manifold $M = (M,g)$, let $M_r = (M, g_r)$ denote
the rescaling of $M$ by $r$, which means that we change only the metric by multiplying every distance by $r$. It can be shown that, as $r\rightarrow \infty$, the manifolds $M_r$ BS-converge to $\R^{d}$. Moreover, if $\phi:M\rightarrow \R$ is an eigenfunction of the
Laplacian on $M$ with eigenvalue $\lambda$, then the very same function 
is an eigenfunction of the Laplacian on $M_r$
with eigenvalue $\lambda^{\prime}=\lambda / r^{2}$. We can then formulate the following conjecture.

\begin{conjecture}[Berry's conjecture in BS form] \label{BerryConj}
Let $M$ be a compact, negatively curved
manifold. Let $(\phi_{n} )$ be an orthonormal basis of $L^2 (M)$ that consists of eigenvectors for the Laplace operator, with eigenvalues $\alpha_n^2$. Then $(M_{\alpha_n}, \phi_{n})$ BS converges to the isotropic monochromatic Gaussian random Euclidean wave with eigenvalue $1$.
\end{conjecture}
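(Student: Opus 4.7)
The plan is to unpack the conjecture as convergence of the push-forward of normalized volume measure on $M$ under the map $x \mapsto (M_{\mu_n}, \phi_n, x)$ to the law of $(\R^d, \Phi, 0)$, where $\Phi$ is the isotropic monochromatic Gaussian wave. Concretely, for every bounded continuous functional $F$ on pointed manifolds-with-function (depending only on the germ in a ball of fixed radius around the basepoint, and invariant under change of orthonormal frame at the basepoint), one must establish
\begin{equation*}
\frac{1}{\vol(M)}\int_M F\bigl(M_{\mu_n}, \phi_n, x\bigr)\, d\vol(x) \;\longrightarrow\; \mathbb{E}\bigl[F(\R^d, \Phi, 0)\bigr].
\end{equation*}
The excerpt already records that $M_{\mu_n}$ BS-converges to $\R^d$, so the geometric side is under control: with probability tending to $1$ the injectivity radius at a random point in $M_{\mu_n}$ exceeds any prescribed $R$, and $\exp_x$ identifies the $R$-ball in $M_{\mu_n}$ with the Euclidean $R$-ball up to arbitrarily small $C^k$ distortion. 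The question reduces to showing that, after pulling back $\phi_n$ via $\exp_x$ at a uniformly random point $x$ (with frame averaged out to produce isotropy), the resulting random smooth function converges in distribution to $\Phi$.

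The natural attack is the method of moments. Since $\phi_n$ satisfies $(\Delta + 1)\phi_n = 0$ in the rescaled metric, its pullback is approximately a Helmholtz-$1$ solution on arbitrarily large Euclidean balls. I would approximate $F$ by a polynomial in cylinder linear functionals $\ell_i(\phi) = \int_{\R^d} \phi \cdot \psi_i$ with $\psi_i \in C_c^\infty(\R^d)$, and attempt to match the averaged moments
\begin{equation*}
\frac{1}{\vol(M)}\int_M \ell_{i_1}\!\bigl(\phi_n\circ\exp_x\bigr)\cdots \ell_{i_k}\!\bigl(\phi_n\circ\exp_x\bigr)\, dx
\end{equation*}
against the Gaussian moments, which by Isserlis' theorem are sums over pairings of two-point functions. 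The two-point function $\mathbb{E}[\Phi(v)\Phi(w)]$ is the Fourier transform of the uniform measure on $S^{d-1}$, equivalently a Bessel kernel in $|v-w|$. On $M$, averaging $\phi_n(x)\phi_n(\exp_x v)$ against a random frame produces $\langle \phi_n, S_{|v|} \phi_n\rangle/\vol(M)$ where $S_r$ is the spherical-mean operator of radius $r$; a standard finite-propagation/wave-kernel argument compares $S_r$ applied to a rescaled-eigenvalue-one eigenfunction with its Euclidean counterpart and yields the expected Bessel two-point function in the limit.

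The main obstacle is the higher moments, $k\geq 4$. One must show that the averaged $k$-point correlators $\int_M \phi_n(\exp_x v_1)\cdots \phi_n(\exp_x v_k)\,dx$ asymptotically factor over pairings, with vanishing connected parts. This is essentially the content of Berry's conjecture itself, and it is a finer question than quantum ergodicity, which supplies only mean-square control of $|\phi_n|^2$. The $k$-point correlators can be expressed through iterated wave propagators and, via a trace-type identity, in terms of sums over $k$-tuples of closed geodesics; establishing their factorization demands genuine decorrelation of these sums, i.e.\ an expression of the multiple mixing of the geodesic flow at the level of the wave kernel. A weak but natural first step, which I expect to be tractable and which the authors likely pursue, is to restrict $F$ to functionals controlled by the second moment (or to average in a short eigenvalue window) and combine quantum ergodicity with finite propagation to recover the two-point Gaussian structure; the full conjecture, however, appears to require ergodic-theoretic input beyond what current techniques provide.
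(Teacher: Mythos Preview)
The statement you are addressing is a \emph{conjecture}, not a theorem: the paper does not prove it and explicitly presents it as open. So there is no ``paper's own proof'' to compare against. Your write-up is not a proof but a sketch of an approach, and you yourself acknowledge in the final paragraph that the decisive step --- factorization of the higher $k$-point correlators into pairings --- is ``essentially the content of Berry's conjecture itself'' and ``appears to require ergodic-theoretic input beyond what current techniques provide.'' That assessment is accurate and matches the paper's stance.

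What the paper \emph{does} prove in the vicinity of this conjecture is: (i) that Conjecture~\ref{BerryConj} implies QUE (Theorem~\ref{weakQUE}), via an energy/ergodicity argument rather than moments; (ii) a quantum-ergodicity theorem in the level aspect (Theorem~\ref{T:introQE}), which controls two-point correlations on average over a spectral window; and (iii) that \emph{random superpositions} of eigenfunctions in a shrinking window do BS-converge to the Gaussian wave (Theorem~\ref{I:T5}). Your closing remark that ``a weak but natural first step\ldots is to restrict $F$ to functionals controlled by the second moment (or to average in a short eigenvalue window)'' is exactly the territory of (ii) and (iii), so your instincts about what is tractable are aligned with the paper. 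But nothing here constitutes a proof of the conjecture, and you should not present the proposal as one.
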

Here we think of the isotropic monochromatic Gaussian random Euclidean wave with eigenvalue $1$ as a probability measure on the space of smooth functions on $\R^d$ --- see  Section \ref{S:berry} for a detailed discussion of this conjecture.

\begin{unremark} 
For chaotic billiards, Ingremeau \cite{Ingremeau} recently and independently
formulated a conjecture of the same nature. Using results of Bourgain,
Buckley and Wigman \cite{Bou14,BW15} he also proved that certain deterministic
families of eigenfunctions on the $2$-torus satisfy the conclusion of
Berry's conjecture. Note that in this case, the curvature is 0 and no
chaotic dynamics are present.
\end{unremark}

\medskip

Conjecture \ref{BerryConj} connects the eigenvalue aspect of the theory (where we have a fixed manifold and the eigenvalues tend to infinity) to the so-called level aspect, where we deal with a sequence of manifolds and the eigenvalue is (close to) a constant. This new connection makes it natural to formulate Berry's conjecture in the level aspect, or, more generally, for Benjamini-Schramm convergent sequences of manifolds in general. 

In this paper we are mainly concerned with locally symmetric spaces $\Gamma \backslash X$ --- where $X=G/K$ is a Riemannian symmetric space associated with a connected center-free semi simple Lie group without compact factors, $K \subset G$ is a maximal compact subgroup and $\Gamma \subset G$ is uniform lattice in $G$. A particular important case is that of sequences of closed hyperbolic $d$-manifolds --- in this case $G= \mathrm{SO}_0 (d,1)$ and $K= \SO(d)$. We shall denote by $x_0 \in X$ the base point $eK \in G/K$.

To formulate the analogue of Berry's conjecture in this context we define isotropic monochromatic Gaussian random waves on $X$ in Section \ref{S2}. Now consider a sequence $M_n = \Gamma_n \backslash X$ of closed $X$-manifolds. Let $d\mathrm{vol}_{M_n}$ denote the normalized
volume on $M_n$, so that $d\mathrm{vol}_{M_n}$ is a probability measure. Let $(\phi_j ^{(n)} )$ be an orthogonal basis of $L^2 (M_n)$ such that for every $j$ we have: 
$$\int_{M_n } |\phi_j^{(n)} |^2 d\mathrm{vol}_{M_n} = 1 \quad \mbox{ and } \quad \Delta \phi_j^{(n)} = \lambda_j^{(n)} \phi_j^{(n)} \ \left(\lambda_j^{(n)}  \geq 0 \right).$$
We furthermore assume that the eigenvalues are ordered in non-decreasing order. 

In this context we replace the hypothesis that $\alpha_n$ tends to $\infty$ in Conjecture \ref{BerryConj} with the hypothesis that the sequence $(M_n)$ BS-converges toward $X$. In other words, for every positive real number $R$, if $n$ is large enough, the pointed covering map $(X , x_0 ) \to (M_n , p)$ associated to a random choice of point $p \in M_n$ (and frame in $(X,x_0)$) almost surely maps the $R$-ball $B_X (x_0 , R)$ isometrically onto $B_{M_n} (p,R)$. We shall also always assume that the sequence $(M_n)$ is {\it uniformly discrete}, i.e. that there exists a uniform (in $n$) lower bound on the injectivity radius of $M_n$.

The assumption that $M$ is negatively curved in Conjecture \ref{BerryConj} insures that the geodesic flow is chaotic. In our context we replace this hypothesis by the existence of a uniform spectral gap on the first positive eigenvalue $\lambda_1 (M_n )$. In this case we say that $(M_n)$ forms an \emph{expander family}.

Let us now fix an element $\lambda_0$ in the spectrum of the Laplace operator on $L^2 (X)$. In general $\lambda_0$ is not an eigenvalue of the Laplace operator on $L^2 (\Gamma_n \backslash X)$. However the following Weyl law type result holds (see Lemma \ref{L:deltan}):
\begin{prop} \label{Pintro}
Suppose that $(M_n )$ BS-converges toward $X$. Then there exists a sequence of positive numbers $\delta_n$ that tends to $0$ as $n$ tends to infinity such that
$$\# \{ i \; : \; \lambda_i^{(n)} \in [\lambda_0 -\delta_n , \lambda_0 + \delta_n] \} =  2\delta_n  p (\lambda_0) \mathrm{vol} (M_n ),$$
where $p (\lambda_0 )$ is a positive number\footnote{Here we take $\lambda_0$ in the interior of the spectrum of the Laplacian on $X$ so that in particular $p(\lambda_0) \neq 0$.} --- the density of the spectral measure of the Laplace operator acting on $L^2 (X)$.
\end{prop}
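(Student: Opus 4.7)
The plan is to prove a smoothed version first via a trace-formula argument and then unsmooth it. Pick a non-negative even Schwartz bump $\psi$ with $\int \psi = 1$, and consider $\psi_\delta(\lambda) = \delta^{-1} \psi((\lambda-\lambda_0)/\delta)$. Harmonic analysis on the symmetric space $X$ realises $\psi_\delta(\Delta)$ as a $G$-invariant convolution operator with a bi-$K$-invariant kernel $k_{\psi_\delta}$ on $X\times X$, whose diagonal value is the Plancherel integral
$$k_{\psi_\delta}(x_0,x_0) = \int \psi_\delta(\lambda)\, p(\lambda)\, d\lambda \longrightarrow p(\lambda_0) \quad \text{as } \delta \to 0.$$
Pushing this kernel down to $M_n = \Gamma_n\backslash X$ and taking the trace produces the pre-trace formula
$$\sum_j \psi_\delta(\lambda_j^{(n)}) \;=\; \int_{M_n} \sum_{\gamma\in\Gamma_n} k_{\psi_\delta}(\tilde x, \gamma \tilde x)\, d\mathrm{vol}(x),$$
whose identity contribution is exactly $k_{\psi_\delta}(x_0,x_0)\,\mathrm{vol}(M_n)$.

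Next I would control the non-identity terms using BS convergence. Truncate $k_{\psi_\delta} = k^R_\delta + r^R_\delta$ so that $k^R_\delta$ is supported in $\{d_X(x,y)\le R\}$. For the truncated piece, BS convergence of $(M_n)$ to $X$ says that the volume of points $p\in M_n$ whose injectivity radius is less than $R$ is $o(\mathrm{vol}(M_n))$ at fixed $R$, so only the $\gamma = e$ term survives up to an error $o(\mathrm{vol}(M_n))\cdot \|k^R_\delta\|_\infty$. For the remainder $r^R_\delta$, spherical function / Paley--Wiener estimates bound the spatial tail of $k_{\psi_\delta}$ in terms of $\delta$ and $R$: roughly, $k_{\psi_\delta}$ is concentrated on a ball of radius comparable to $1/\delta$ and the remainder outside $B(x_0,R)$ has $L^1$-mass that is small once $R$ dominates $1/\delta$. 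Combining, for any fixed $\delta>0$ one gets
$$\sum_j \psi_\delta(\lambda_j^{(n)}) \;=\; \mathrm{vol}(M_n)\bigl(k_{\psi_\delta}(x_0,x_0) + o_{n\to\infty}(1)\bigr).$$

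A diagonal argument — choosing $\delta_n \to 0$ slowly enough that at scale $R_n \gg 1/\delta_n$ the BS error on $M_n$ is still $o(\delta_n)$ — yields a sequence $\delta_n\to 0$ for which the smoothed count is $\sim 2\delta_n p(\lambda_0)\mathrm{vol}(M_n)$. To remove the smoothing, sandwich the sharp indicator $\chi_{[\lambda_0-\delta_n,\lambda_0+\delta_n]}$ between two bumps $\psi_\delta^\pm$ supported on slightly shrunk and enlarged windows, apply the smoothed asymptotic to each, and use that $p$ is continuous at $\lambda_0$ so the two bounds match in the limit. The main obstacle is the balancing act between three small parameters: the spectral window $\delta_n$, the spatial cutoff $R_n$, and the BS-error rate. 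The spectral localisation $\delta_n$ forces the kernel to spread spatially to scale $\sim 1/\delta_n$, which in turn demands BS control on balls of radius $R_n\gg 1/\delta_n$; the proof reduces to quantifying the Paley--Wiener tail of $k_{\psi_\delta}$ against the BS rate, and the sequence $\delta_n$ produced is not explicit but extracted from this trade-off, as in Theorem~\ref{t:shrinking window}.
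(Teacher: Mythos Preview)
Your proposal is correct and follows essentially the same route as the paper's proof of Corollary~\ref{C21} via Theorem~\ref{t:shrinking window}: pre-trace formula, spatial truncation of the spectral kernel, main term from $\gamma=e$, non-identity terms killed by BS convergence on the thin part, and a diagonal choice of $\delta_n$ against the truncation radius $r_n$ and the BS rate $\alpha_n$. The one place where you are heuristic and the paper is precise is the tail estimate you call ``Paley--Wiener'': the paper states and proves this as Lemma~\ref{Ltech}, namely $\|\widehat{F}_{\delta,r}-\chi_\delta\|_\infty\lesssim r^{-M}\delta^{-N}$, obtained from Harish-Chandra's bounds on the inverse spherical transform rather than a literal Paley--Wiener argument; this is what forces the constraint $\delta_n\ge r_n^{-\beta'}$ with $\beta'<M/N$ and makes your ``$R\gg 1/\delta$'' precise. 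Your final sandwich to pass from the smoothed count to the sharp window is exactly what the paper's ``which can be used to get'' is hiding.
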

In the rest of this introduction we fix such a sequence $(\delta_n)$ and furthermore assume that $\delta_n \gg 1/\log(\vol (M_n))$. In particular when $n \to +\infty$, the number of eigenvalues in the shrinking interval $[\lambda_0 -\delta_n , \lambda_0 + \delta_n]$ tends to infinity.

Now pick $\phi_j^{(n)}$ with $\lambda_j^{(n)} \in [\lambda_0 - \delta_n , \lambda_0 + \delta_n]$ and lift it using the pointed covering map $(X , x_0 ) \to (M_n , p)$ associated to a random choice of point $p \in M_n$ (and frame in $(X,x_0)$). It yields a random function on $X$ or a random field that we can loosely think of as a probability measure on $C^{\infty} (X)$. This random function is a random translate of $\phi_j^{(n)}$ by an element of $G$ and as such is periodic with respect to a conjugate of $\Gamma$. We shall say that a general random field on $X$ is \emph{aperiodic} if it almost surely admits no non trivial period.
By analogy with Berry's conjecture, we propose the following provocative conjecture. It is inspired by deep recent results of Backhausz and Szegedy \cite{BS} on the distribution of eigenvectors of random regular graphs. 

\begin{conjecture} \label{BC2}
Let $(M_n = \Gamma_n \backslash X )$ be a uniformly discrete expander family that BS-converges toward $X$. Let $\delta_n$ be as in Proposition \ref{Pintro}. Then 
$$\{ (M_n ,  \phi_j^{(n)}) \; : \;  \lambda_j^{(n)} \in [\lambda_0 - \delta_n , \lambda_0 + \delta_n] \}$$
is relatively compact in BS-topology, accumulation points are random fields on $X$ and the only possible aperiodic accumulation point is the isotropic monochromatic Gaussian random wave on $X$ with eigenvalue $\lambda_0$.
\end{conjecture}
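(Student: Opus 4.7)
The plan is to address the three assertions in turn, with almost all the difficulty concentrated in the last.

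For relative compactness and identification of limits as random fields on $X$, uniform discreteness provides a lower bound $r_0$ on injectivity radius, so on each pointed cover $(X,x_0) \to (M_n, p)$ the lifted eigenfunction $\widetilde{\phi}_j^{(n)}$ is defined and $L^2$-controlled on the ball of radius $r_0$. Since $\lambda_j^{(n)}$ stays in a bounded neighborhood of $\lambda_0$ and BS-convergence $M_n \to X$ guarantees that any fixed ball $B_X(x_0, R)$ is eventually isometrically embedded, elliptic regularity for $\Delta - \lambda_0$ gives uniform $C^k(B_X(x_0, R))$ bounds for every fixed $R$ and $k$. Arzel\`a--Ascoli then gives tightness in $C^\infty_{\mathrm{loc}}$, hence BS-compactness; the underlying pointed manifold in any accumulation point is almost surely $(X,x_0)$, so accumulation points are probability measures on $C^\infty(X)$. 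Passing to the limit in $\Delta \widetilde{\phi}_j^{(n)} = \lambda_j^{(n)} \widetilde{\phi}_j^{(n)}$, with $\delta_n \to 0$, shows that any accumulation point $\Psi$ is almost surely an eigenfield with eigenvalue $\lambda_0$; the random choice of frame at $p$ forces its distribution on $C^\infty(X)$ to be $G$-invariant.

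It remains to show that an aperiodic accumulation point $\Psi$ must be the isotropic monochromatic Gaussian random wave. Any $G$-invariant random eigenfield at eigenvalue $\lambda_0$ has covariance proportional to the elementary spherical function $\varphi_{\lambda_0}$, since $\varphi_{\lambda_0}$ is the unique $K$-biinvariant $\Delta$-eigenfunction at $\lambda_0$ normalized at $x_0$; the scalar is pinned down by the $L^2$-normalization together with Proposition \ref{Pintro}. One is therefore reduced to proving that the finite-dimensional distributions of $\Psi$ are Gaussian. Following Backhausz--Szegedy \cite{BS}, the strategy is (i) to convert the expander hypothesis into a genuine multi-point mixing statement for $|\widetilde{\phi}_j^{(n)}|^2$ at scale $\delta_n$, so that joint statistics of $\Psi$ at well-separated points approximately factorize; and (ii) to run a maximum-entropy / invariance argument to conclude that any $G$-invariant eigenfield with the prescribed covariance and asymptotically independent distant marginals is Gaussian, with the aperiodicity hypothesis ruling out the degenerate atoms (concentrated on single periodic eigenfunctions) that would otherwise obstruct the conclusion.

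The main obstacle is step (ii). Backhausz--Szegedy's argument rests on a well-developed entropy theory for factor-of-i.i.d.\ processes on regular trees together with the unimodularity of BS-limits of finite graphs. Transposing this to locally symmetric $X$ requires a $G$-equivariant entropy theory for stationary random fields on $X$ and an effective multi-point mixing statement at the shrinking spectral scale $\delta_n$. Neither ingredient is presently available in the generality needed, which is why Conjecture \ref{BC2} is only stated, not proved.
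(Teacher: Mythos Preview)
The statement you were asked to prove is a \emph{conjecture}: the paper does not prove it, and explicitly says so (``Conjecture \ref{BC2} may be hard to prove. However it suggests interesting results that appear to be more tractable.''). You recognize this yourself in your final paragraph, so in that sense your proposal is honest and there is nothing to compare against.

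A few comments on the sketch you do give. Your argument for relative compactness and for identifying accumulation points as $G$-invariant random $\lambda_0$-eigenfields on $X$ is reasonable and is the natural one; the paper does not spell this out, but it is implicit in the way the conjecture is phrased and in the discussion of $\mu_{\Gamma,\phi}$ in Section~\ref{S:BS}. Your observation that the covariance of any $G$-invariant $\lambda_0$-eigenfield must be a scalar multiple of $\varphi_{\lambda_0}$ is correct and is essentially the content of Lemma~\ref{l:covkernel}. However, your claim that ``the scalar is pinned down by the $L^2$-normalization together with Proposition~\ref{Pintro}'' is not obviously justified: the normalization $\int_{M_n}|\phi_j^{(n)}|^2\,d\mathrm{vol}_{M_n}=1$ only gives $\mathbb{E}[\Psi(x_0)^2]\le 1$ in the limit, and ruling out strict inequality (escape of energy to infinity) is itself part of the content of the conjecture --- compare the discussion around Proposition~\ref{P15} in the eigenvalue aspect.

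Your identification of the main obstacle --- transposing the Backhausz--Szegedy entropy/factor-of-i.i.d.\ machinery from regular trees to symmetric spaces --- is exactly the motivation the paper cites for the conjecture, and your assessment that the required ingredients are not presently available is correct. So your proposal is not a proof, but a fair summary of why the conjecture is open; that matches the paper.
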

Note that one can not hope in general that $(M_n ,  \phi_j^{(n)})$ BS-converges to the isotropic monochromatic Gaussian random wave on $X$ with eigenvalue $\lambda_0$: if the $M_n$'s form a tower of finite coverings and the $\phi_j^{(n)}$'s are just lifts of the very same function $\phi_0$, the limit random field is supported on random lifts of $\phi_0$, that are all periodic functions. 

\begin{unremark} 
Suppose that the sequence $\mathrm{vol} (M_n )$ tends to infinity. It then follows from \cite{7samurai}, and a recent announcement by Fraczyk and Raimbault based on \cite{Fraczyk}, that if either
\begin{itemize}
\item the group $G$ is of real rank $2$ and has Kazhdan's property (T) --- e.g. if $G = \mathrm{SL}_N (\mathbf{R})$ with $N \geq 3$, or
\item the discrete groups $\Gamma_n$ are congruence subgroups,
\end{itemize} 
then $(M_n )$ BS-converges toward $X$ and that the first positive eigenvalue $\lambda_1 (M_n )$ is uniformly bounded away from $0$. In these cases it is moreover expected --- a conjecture due to Margulis \cite[page 322]{margulis:book} --- that the family $(M_n)$ is always uniformly discrete, so that all the hypotheses of Conjecture \ref{BC2} should be satisfied. 
\end{unremark} 

\medskip

Conjecture \ref{BC2} may be hard to prove. However it suggests interesting results that appear to be more tractable. 

\subsection*{A deterministic viewpoint: Quantum Ergodicity}

As already mentioned, rigorous proofs of Berry's phenomenon are missing. However, a consequence of this expected randomness and one of the rare general results known is the Quantum Ergodicity (QE) theorem (due to Shnirelman, Zelditch and Colin de Verdi\`ere \cite{Sni, Zel87, CdV85}). 

Let $M$ be a compact Riemannian manifold with normalized volume form $d\mathrm{vol}_M$. To any norm $1$ function $\phi \in
L^{2}(M)$ corresponds a probability measure $\mu_{\phi}$ on $M$ whose density is $|\phi|^{2}$. 
The Quantum Ergodicity Theorem states that if the geodesic flow is ergodic, then for any fixed orthonormal basis 
$(\phi_{j})$ of eigenfunctions of the Laplace operator, with nondecreasing sequence of
eigenvalues $\lambda_0 \leq \lambda_1 \leq \ldots \leq \lambda_j \to +\infty$, and for any continuous function $a : M \to \R$ we have:
\[
\lim_{N\rightarrow\infty}\frac{1}{N}\sum\limits_{j=1}^{N}\left\vert \int_{M}a \,d\mu_{\phi_j}%
-\int_{M}a \, d\mathrm{vol}_M\right\vert^2 = 0.%
\]
This implies that for a density $1$ subsequence $j_{k}$ of the integers, we
have
\begin{equation}\label{e:QEsubseq}
\lim_{k\rightarrow\infty}\mu_{\phi_{j_{k}}}=d\mathrm{vol}_M
\end{equation}
in weak-$\ast$ convergence. In other words, the densities of probability measures associated with eigenfunctions become uniform in the large eigenvalue limit for most eigenfunctions. The Quantum Unique Ergodicity (QUE) conjecture, due to Rudnick and Sarnak \cite{RS94}, predicts that if $M$ has negative curvature, the full sequence in \eqref{e:QEsubseq} should converge. Although great progress have been made by Lindenstrauss \cite{Lin06}, Anantharaman \cite{Ana08} and Dyatlov-Jin \cite{DJ} in this direction, the conjecture is still open.

It is known that the Gaussian moment conjecture (for fourth moments) implies QUE. On the other hand, it can be deduced from the exceptional behaviour of $L^\infty$-norms of some eigenfunctions in arithmetic hyperbolic manifolds \cite{RS94} that the Gaussian moment conjecture is false in general for large moments. Thanks to the nature of BS sampling, these wild behaviours do not seem to affect our Conjecture \ref{BerryConj}. Note however that Conjecture \ref{BerryConj} is probably hard to prove. In \S \ref{S:relQUE} we indeed prove:

\begin{theorem} \label{weakQUE}
Conjecture \ref{BerryConj} implies that the sequence $(\mu_{\phi_j})$ weakly converges toward $d\mathrm{vol}_M$. 
\end{theorem}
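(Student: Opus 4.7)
The plan is to exploit Berry's conjecture to replace $|\phi_n|^2$ by its average over small balls (of radius $r/\mu_n$), then use the ergodicity of the isotropic monochromatic Gaussian random wave under translations of $\R^d$ to show those local averages concentrate at the value $1$, and finally remove the averaging using continuity of the test function.

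Fix a continuous $a:M\to \R$; I must show $\int_M a\,|\phi_n|^2\,d\mathrm{vol}_M \to \int_M a\,d\mathrm{vol}_M$. For $r>0$ introduce
$$f_n^r(p) := \frac{1}{\mathrm{vol}_M B_M(p,r/\mu_n)}\int_{B_M(p,r/\mu_n)}|\phi_n|^2\,d\mathrm{vol}_M,$$
which is precisely the average of $|\phi_n|^2$ over the ball of radius $r$ in the rescaled manifold $M_{\mu_n}$. A Fubini calculation, using uniform continuity of $a$ on the compact manifold $M$ together with the smoothness of the ball-volume function, gives $\int_M a\,f_n^r\,d\mathrm{vol}_M = \int_M a\,|\phi_n|^2\,d\mathrm{vol}_M + o_n(1)$ and $\int_M f_n^r\,d\mathrm{vol}_M = 1 + o_n(1)$ for fixed $r$ as $n\to\infty$. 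It therefore suffices to show $\|f_n^r - 1\|_{L^1(M)} \to 0$ along a suitable double limit.

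Here Conjecture~\ref{BerryConj} enters. Consider the bounded continuous test functional on pointed manifolds-with-functions
$$\Psi_r(X,x_0,F) := \bigl(1 - \Phi_r(X,x_0,F)\bigr)_+ \in [0,1], \qquad \Phi_r(X,x_0,F) := \frac{1}{\mathrm{vol}_X B_X(x_0,r)}\int_{B_X(x_0,r)} F^2\,d\mathrm{vol}_X,$$
which depends on $F$ only through its $L^2$-norm on $B_X(x_0,r)$. The BS-convergence asserted in Berry's conjecture yields
$$\int_M (1 - f_n^r)_+\,d\mathrm{vol}_M \ \longrightarrow \ \mathbb{E}_F\bigl[(1 - \Phi_r(F))_+\bigr] \quad \text{as } n \to \infty,$$
where $F$ is the isotropic monochromatic Gaussian random wave on $\R^d$ with eigenvalue $1$. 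Cutting $|f_n^r-1|$ into positive and negative parts and using $\int_M f_n^r\,d\mathrm{vol}_M = 1 + o_n(1)$,
$$\|f_n^r - 1\|_{L^1(M)} = 2\int_M(1 - f_n^r)_+\,d\mathrm{vol}_M + o_n(1) \ \longrightarrow \ 2\,\mathbb{E}_F\bigl[(1 - \Phi_r(F))_+\bigr].$$

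The last step is to send $r\to\infty$ via ergodicity. The isotropic monochromatic Gaussian wave on $\R^d$ is a stationary Gaussian field whose spectral measure (the uniform probability measure on $S^{d-1}$) is continuous, so by the classical Fomin--Maruyama criterion the $\R^d$-translation action on its law is mixing, hence ergodic. Applying the $L^2$ mean ergodic theorem to the square-integrable observable $F\mapsto F(0)^2$ gives $\Phi_r(F)\to \mathbb{E}[F(0)^2]=1$ in $L^2$ as $r\to\infty$, whence $\mathbb{E}_F[(1-\Phi_r(F))_+]\to 0$. Choosing $r$ large and then $n$ large makes $\|f_n^r-1\|_{L^1(M)}$ arbitrarily small and, combined with the reduction in the second paragraph, yields $\mu_{\phi_n}\to d\mathrm{vol}_M$ weakly. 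The main obstacle I anticipate is technical: one must verify that $\Psi_r$ is continuous in the precise topology on pointed manifolds-with-functions used to define BS-convergence in Section~\ref{S:BS} (continuity in the local $C^0$ topology on $F$ will suffice), and carefully track the $o_n(1)$ errors coming from the near-self-adjointness of the ball-averaging kernel when $r/\mu_n>0$; the ergodicity of the Gaussian wave, though classical, should also be cited with care.
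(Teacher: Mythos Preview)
Your argument is correct and takes a genuinely different route from the paper's. Both proofs ultimately rest on the same two facts about the limiting wave $F$ --- ergodicity under translations and energy $\mathbb{E}[F(0)^2]=1$ --- but they deploy them quite differently.

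The paper proceeds in two stages. First (Proposition~\ref{P15}) it shows, via an ``escape of mass'' argument on the value distribution of $\phi_n^2$, that any weak limit $\nu$ of $(\mu_{\phi_n})$ has singular part of mass at most $1-e(F)$; since $e(F)=1$ for the Gaussian wave, $\nu$ is absolutely continuous. Then, fixing a continuity set $B$, the paper introduces the \emph{conditional} BS measures $\mu_{M_{\mu_n},\phi_n}^B$ obtained by sampling only from $B$, proves their tightness, establishes translation invariance of any subsequential limit by a F{\o}lner-type argument on the geodesic flow (Lemma~\ref{L:16b}), and finally uses ergodicity in the form \emph{extremality among invariant measures} to force both conditional limits (for $B$ and ${}^cB$) to equal the Gaussian wave. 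Energy conservation is invoked a second time to conclude $\nu_n(B)\to\vol_M(B)$.

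Your approach bypasses all of this structure: you average $|\phi_n|^2$ over shrinking balls, apply BS-convergence to the single bounded continuous observable $(1-\Phi_r)_+$, exploit the elementary identity $\|f_n^r-1\|_{L^1}=2\int(1-f_n^r)_+ + o_n(1)$ (which is where $e(F)=1$ enters for you), and use ergodicity via the $L^2$ \emph{mean ergodic theorem} rather than extremality. This is shorter and avoids the conditional-sampling construction and the translation-invariance Lemma~\ref{L:16b} entirely. On the other hand, the paper's route isolates Proposition~\ref{P15} as an independently interesting statement (the singular part of any limit is controlled by the energy defect, with no ergodicity needed), and its extremality argument dovetails with the measure-rigidity and IRS perspective developed elsewhere in the paper. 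The technical point you flag --- continuity of $\Psi_r$ in the smooth topology of $\mathcal{E}^d$ --- is genuine but routine, since $\Phi_r$ depends only on the metric and on $F$ restricted to a fixed-radius ball.
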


Note that for Theorem \ref{weakQUE}, we do not use that the limiting invariant
random wave is Gaussian, just that it is ergodic and does not lose
energy. The energy stability implies that any subsequential limit of
square measures is absolutely continuous with respect to volume and an
ergodicity argument then shows that the limiting measure must be equal
to the volume. This is related to an observation of Hejhal and Rackner \cite[\S 5.1]{Hejhal}.

Conjecture \ref{BC2} suggests considering a different point of view where one takes eigenfunctions in a fixed spectral window and varies instead the manifold. Keeping notations as in Conjecture \ref{BC2}, we expect the correlation function $\phi_j^{(n)} (x) \phi_j^{(n)} (y)$ to be close to the correlation function of the isotropic monochromatic Gaussian random wave on $X$ with eigenvalue $\lambda_0$. We shall see that, by definition, the latter is the spherical function $\varphi_{\lambda_0} (x,y)$ on $X$ that only depends on the distance $d(x,y)$ and is a $\lambda_0$-eigenfunction of $y$ when $x$ is fixed. 

We address this question through a sequence of $\Gamma_n$-invariant test kernels on $X$: 
\begin{equation} \label{intro:test}
A^{(n)} : \Gamma_n \backslash (X \times X) \to \R \mbox{ with } A^{(n)} (x,y) =0 \mbox{ if } d(x,y) > M
\end{equation}
for some uniform (in $n$) constant $M$. The kernel $A^{(n)}$ defines an operator $\mathbf{A}^{(n)}$ on $C^{\infty} (M_n )$ by the formula 
$$(\mathbf{A}^{(n)} f ) (x) = \int_X  A^{(n)} (x,y) f(y) dy \quad \left( f \in C^\infty (M_n ) \right).$$ 
Finally, the expression $ \int_X A^{(n)} (x,y) \varphi_{\lambda_0} (x,y) dx$ being $\Gamma_n$-invariant we may define
$$\langle \mathbf{A}^{(n)} \rangle_{\lambda_0} = \int_{M_n} \left( \int_X A^{(n)} (x,y) \varphi_{\lambda_0} (x,y) dx \right) d\mathrm{vol}_{M_n}(y).$$ One would like to prove that  
$$\int_{M_n} \phi_j^{(n)} (x)  (\mathbf{A}^{(n)} \phi_j^{(n)} ) (x)  d\mathrm{vol}_{M_n}(x) - \langle \mathbf{A}^{(n)} \rangle_{\lambda_0}$$
tends to $0$ as $n$ tends to infinity. This is not true in general because of the existence of periodic subsequences $(M_n ,  \phi_j^{(n)})$. However, when the real rank of $X$ is $1$, we prove the following analogue of QE in this context.

\begin{theorem} \label{T:introQE}
Suppose that $\mathrm{rank}_\R (X) = 1$. Let $(M_n )$ be a uniformly discrete expander family that BS-converges toward $X$ and let $(A^{(n)})_{n \in \mathbb{N}}$ be a uniformly bounded sequence of test kernels satisfying \eqref{intro:test}. There exists a sequence $(\delta_n )$ as in Proposition \ref{Pintro} such that, setting $I_n = [\lambda_0 - \delta_n , \lambda_0 + \delta_n]$ and letting 
$$N(\delta_n , \Gamma_n ) = \# \{ j \; : \; \lambda_j^{(n)} \in I_n \},$$ 
we have:
$$\frac{1}{N(\delta_n , \Gamma_n )} \sum_{\lambda_j^{(n)} \in I_n} \left| \int_{M_n} \phi_j^{(n)} (x)  (\mathbf{A}^{(n)} \phi_j^{(n)} ) (x) d\mathrm{vol}_{M_n}(x) - \langle \mathbf{A}^{(n)} \rangle_{\lambda_0} \right|^2 \to 0$$
as $n$ tends to infinity. 
\end{theorem}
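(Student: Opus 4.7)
My plan is to adapt the classical Shnirelman--Zelditch--Colin de Verdi\`ere variance argument to the BS level aspect, in the spirit of Le Masson--Sahlsten's work on large hyperbolic surfaces. The essential substitutions are: the ergodic time-average along the geodesic flow is replaced by convolution with a family of bi-$K$-invariant kernels on $G$, and the ergodicity itself is replaced by the uniform spectral gap given by the expander hypothesis.

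Introduce a parameter $T = T_n \to \infty$ with $T_n \delta_n \to 0$, and choose a bi-$K$-invariant $k_T \in C_c^\infty(G)$ with $\mathrm{supp}(k_T) \subset B(e, T)$ whose spherical Harish-Chandra transform $\hat{k}_T$ is a smooth real-valued bump at $\lambda_0$ with $\hat{k}_T(\lambda_0) = 1$. The associated convolution operator $\mathbf{K}_T$ on $L^2(M_n)$ is self-adjoint, commutes with $\Delta$, and acts on $\phi_j^{(n)}$ by the scalar $\hat{k}_T(\lambda_j^{(n)}) = 1 + o(1)$ whenever $\lambda_j^{(n)} \in I_n$. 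Setting
$$\mathbf{B}^{(n)} := \mathbf{K}_T \mathbf{A}^{(n)} \mathbf{K}_T - c_n \mathbf{K}_T^2, \qquad c_n := \langle \mathbf{A}^{(n)} \rangle_{\lambda_0},$$
the identity $\langle \phi_j^{(n)}, \mathbf{A}^{(n)} \phi_j^{(n)} \rangle - c_n = \langle \phi_j^{(n)}, \mathbf{B}^{(n)} \phi_j^{(n)} \rangle + o(1)$ holds uniformly for $\lambda_j^{(n)} \in I_n$, and Cauchy--Schwarz yields
$$\sum_{\lambda_j^{(n)} \in I_n} |\langle \phi_j^{(n)}, \mathbf{B}^{(n)} \phi_j^{(n)} \rangle|^2 \leq \|\mathbf{B}^{(n)} \mathbf{P}_{I_n}\|_{\mathrm{HS}}^2.$$
The right-hand side is then expanded as a geometric sum by $\Gamma_n$-periodizing on $X \times X$. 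The identity-coset term reduces via BS-convergence to an integral on $X$, which by the construction of $c_n$ and the collapse of the spherical average under $\mathbf{K}_T$ becomes $o(\delta_n \mathrm{vol}(M_n))$; the non-identity cosets correspond to $\Gamma_n$-loops of length at most $2T + O(M)$, whose density tends to zero by BS-convergence and whose total contribution is controlled by the uniform spectral gap.

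The main obstacle is the simultaneous calibration of three scales: the window width $\delta_n \to 0$ from Proposition \ref{Pintro}, the propagation time $T \to \infty$ implementing the ergodic average, and the diameter scale at which BS-convergence takes effect. The condition $T\delta_n \to 0$ is needed so that $\mathbf{K}_T$ acts as the identity on $I_n$; $T \to \infty$ is needed so that the spherical average of the symbol of $\mathbf{A}^{(n)}$ collapses to the mean $c_n$; and BS-convergence must outpace the growth of $T$ so that the off-diagonal geometric terms stay negligible. The rank-one hypothesis enters here in an essential way: the spherical Plancherel density $p(\lambda_0)$ is one-dimensional, positive and locally regular at $\lambda_0$, which both underlies the Weyl-law count $N(\delta_n, \Gamma_n) \asymp \delta_n \mathrm{vol}(M_n)$ of Proposition \ref{Pintro} and makes the frequency localization $\hat{k}_T$ match the shrinking window $I_n$ cleanly. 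In higher rank the Plancherel measure is supported on a Weyl chamber and vanishes on its walls, breaking this dimensional matching and explaining the restriction to $\mathrm{rank}_{\R}(X) = 1$.
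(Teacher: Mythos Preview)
Your architecture is right --- conjugate by a bi-$K$-invariant convolution, bound the variance by a Hilbert--Schmidt norm, expand geometrically, and separate the identity coset from the rest --- but the specific mechanism you propose for the main term has a gap, and the roles of the two hypotheses are partly reversed.

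You ask $\mathbf{K}_T$ to play two incompatible roles. Requiring $\hat k_T(\lambda_0)=1$ with $\hat k_T$ a bump of width $\sim 1/T$ makes $\mathbf{K}_T$ an approximate spectral projector near $\lambda_0$; but then there is no reason for the identity-coset term in $\|\mathbf{B}^{(n)}\|_{\mathrm{HS}}^2$ to be $o(\delta_n\,\mathrm{vol}(M_n))$. A spectral projector does not average the symbol of $\mathbf{A}^{(n)}$ toward its mean --- that would require $\hat k_T$ to concentrate near the \emph{trivial} representation, not near $\lambda_0$. In the paper the device is different: one takes $k_t=\chi_{B_t}/\sqrt{m_G(B_t)}$ (a ball-average, not a spectral bump), shows on the spectral side only the \emph{lower} bound $\tfrac1T\int_0^T|\hat k_t(\lambda)|^2\,dt\ge C_I>0$ uniformly on $I_n$, and then bounds the HS norm of the \emph{time-averaged} operator $\tfrac1T\int_0^T\rho(k_t)\mathbf{A}_r\rho(k_t)\,dt$. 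The smallness of the identity-coset term then comes from Nevo's ergodic theorem applied to $b_r(g)=A(g,ga_r)$ (which has mean zero after subtracting the radial average $[\mathbf{A}]$), yielding a genuine $1/T$ gain. So the spectral gap controls the \emph{main} term, while BS-convergence controls the \emph{off-diagonal} $\gamma\neq e$ terms --- the opposite of what your last paragraph suggests.

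This also explains why your scale condition is backwards. You impose $T\delta_n\to 0$ so that $\hat k_T\approx 1$ on $I_n$; the paper needs no such condition because it never asks $\hat k_t$ to equal $1$ at $\lambda_0$. Instead it takes $T=r_n/2\to\infty$ with $\delta_n\ge r_n^{-\beta'}$, so that both $1/(T\delta_n)\to 0$ (main term, after dividing by $N(\delta_n,\Gamma_n)\sim\delta_n\,\mathrm{vol}$) and $\delta_n^{-1}\mathrm{vol}(B(e,r_n))\alpha_n\to 0$ (off-diagonal, via BS). The radial disintegration $\mathbf{A}=\int_0^M\mathbf{A}_r\sinh(\rho r)\,dr$ and the preliminary reduction to $[\mathbf{A}]=0$ are what make $b_r$ live in $L^2_0$ so that Nevo applies; your sketch is missing this reduction as well.
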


Note that here the sequence $(\delta_n)$ depends on the sequence of test kernels. In the remark following Theorem \ref{T2} we formulate a version of this theorem for a window of (uniform) positive radius $\delta$. A result of this nature was first obtained recently by Le Masson and Sahlsten \cite{LMS} for sequences of hyperbolic surfaces. Our result gives in particular a generalization to hyperbolic manifolds of any dimension. Note also that in \cite{LMS}, only operators obtained by multiplication by functions are considered, and the spectral window is not authorized to shrink. Our proof is similarly based on the use of the mixing dynamics in the form of an ergodic theorem of Nevo, and is developed in Sections \ref{s:qes1}, \ref{s:qes2} and \ref{s:qes4}. The work of \cite{LMS} and ours are deeply inspired by results on large regular graphs by Anantharaman and Le Masson \cite{ALM15} and a variation of the proof appearing in \cite{BLML15}. On discrete regular graphs the spectrum of the Laplacian is always bounded and the relevant limit becomes that of large graphs. 
One of the advantages of the Benjamini-Schramm formalism is to unify these theories by providing the same framework for the discrete and continuous, large eigenvalue and large volume settings.

\subsection*{A random viewpoint}

Proving that the limits associated with deterministic eigenfunctions in Conjecture \ref{BerryConj} and Conjecture \ref{BC2} exhibit Gaussian behavior is most likely a very difficult problem. However we show that for random superpositions of the eigenfunctions $\phi_j^{(n)}$ Conjecture \ref{BC2} holds. In fact we prove a stronger result, see Theorem \ref{I:T5} below. 

Considering random superpositions naturally leads to two processes on measures on $C^\infty (X)$: we may either first pick a random superposition and then lift it to $X$ via a random projection $X \to M_n$, or first pick a random projection and then lift a random superposition using this projection. In other words for the first process we first choose a random superposition, and then look at the function we just obtained around a randomly chosen point (and frame). In the second case we first choose a point at random and look at a random superposition around this point. This leads to two different random {\it processes on processes} on $C^\infty (X)$, or equivalently to two measures on $\mathbb{M}^1(C^\infty(X))$, the space of probability measures on $C^\infty (X)$ that we denote respectively by $\alpha$ and $\beta$. In general, the process associated with $\alpha$ is richer as far as we are interested in random eigenfunctions. For a precise definition of these two processes in the level aspect, see Section \ref{s:randomization}.

In the context of Conjecture \ref{BerryConj}, Nazarov and Sodin have shown that for any base point $x$ in $M$ picking a random element in the unit sphere of the finite dimensional space spanned by eigenfunctions of eigenvalues $\leq r$ defines --- after rescaling by $r$ --- a process $f_{x,r}$ which is close in law with the process $F_x$ associated to the monochromatic Gaussian random Euclidean wave with eigenvalue $1$. In particular the process $\beta$ above converges in law to the Dirac measure on the monochromatic Gaussian random Euclidean wave with eigenvalue $1$. In fact Nazarov and Sodin prove a stronger coupling result: they show that one can find a coupling $(f_{x,r} ' , F_x ')$ of random variables defined on the same probability space such that $f_{x,r}$ and $f_{x,r} '$, resp. $F_x$ and $F_x '$, have the same law, and with high probability $f_{x,r} '$ and $F_x '$ are close in $C^1$-norm. See \cite[\S 2.2]{SBM} or \cite[Example 1.1 and p. 1116-27]{NaliniBourbaki}. This step is crucial in the work of Nazarov and Sodin \cite{NS,SBM} on the asymptotic counting of nodal domains. 

Our second theorem is related to the (weak form of the) result of Nazarov and Sodin but in the level aspect and for the richer process $\alpha$.

\begin{theorem} \label{I:T5}
Let $(M_n )$ be a uniformly discrete sequence that BS-converges toward $X$. There exists a sequence $(\delta_n)$ as in Proposition \ref{Pintro} such that if for each $n$, we pick uniformly at random a function $\phi_n$ in the unit sphere of 
$$\mathrm{span} \{ \phi_j^{(n)} \; : \; \lambda_j^{(n)} \in [\lambda_0 - \delta_n , \lambda_0 + \delta_n ] \}$$
and randomly lift it to $X$, then the resulting process on $\mathbb{M}^1(C^\infty(X))$ converges in law to the Dirac measure on the isotropic monochromatic Gaussian random wave on $X$ with eigenvalue $\lambda_0$.
\end{theorem}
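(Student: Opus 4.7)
The plan is to establish two facts from which convergence in law of the random probability measure $F_n$---the conditional law of the lifted field $\Phi_n$ given the random unit vector $\phi_n$---to the Dirac mass at the isotropic monochromatic Gaussian random wave $F$ on $X$ follows: (i) the annealed finite-dimensional moments $\mathbb{E}[\Phi_n(x_1)\cdots\Phi_n(x_k)]$, taken jointly over $\phi_n$ and the random frame $\gamma\in\Gamma_n\backslash G$, converge to the moments of $F$; and (ii) the variance over $\phi_n$ of the conditional moments $\mathbb{E}_\gamma[\Phi_n(x_1)\cdots\Phi_n(x_k)\mid\phi_n]$ vanishes. Write $\phi_n=\sum_j c_j\phi_j^{(n)}$ with $c$ uniform on the unit sphere of $H_n:=\mathrm{span}\{\phi_j^{(n)} : \lambda_j^{(n)}\in[\lambda_0-\delta_n,\lambda_0+\delta_n]\}$, whose dimension $N_n\to\infty$ by Proposition~\ref{Pintro}, and choose the basis $(\phi_j^{(n)})$ to diagonalize the algebra of $G$-invariant differential operators. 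Uniform $C^\infty$-bounds on eigenfunctions in the shrinking window (from elliptic regularity and the uniform lower bound on the injectivity radius) give tightness of $F_n$ in $C^\infty(X)$, so it suffices to work at the level of finite-dimensional marginals.

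For (i) the classical sphere moment identity
\[
\mathbb{E}_c[c_{j_1}\cdots c_{j_{2m}}] = \frac{1}{N_n(N_n+2)\cdots(N_n+2m-2)}\sum_{\sigma}\prod_{\{a,b\}\in\sigma}\delta_{j_a j_b}
\]
collapses the $j$-sums via $\sum_j\phi_j^{(n)}(y)\phi_j^{(n)}(z)=K_{I_n}(y,z)$, the spectral projector onto $H_n$, yielding
\[
\mathbb{E}_c[\phi_n(y_1)\cdots\phi_n(y_{2m})] = \frac{1}{N_n(N_n+2)\cdots(N_n+2m-2)}\sum_{\sigma}\prod_{\{a,b\}\in\sigma} K_{I_n}(y_a,y_b).
\]
Substituting $y_i=\pi_\gamma(x_i)$ and averaging over $\gamma$, BS-convergence combined with the local Weyl law that underlies Proposition~\ref{Pintro} gives $N_n^{-1}K_{I_n}(\pi_\gamma(x_a),\pi_\gamma(x_b))\to\varphi_{\lambda_0}(x_a,x_b)$ in probability, so the annealed $2m$-moment converges to the Wick sum $\sum_\sigma\prod\varphi_{\lambda_0}(x_a,x_b)$---exactly the $2m$-point correlation of the Gaussian wave $F$.

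For (ii), compute the squared conditional moment by introducing an independent copy $\gamma'$ of the frame variable and applying the order-$4m$ sphere moment identity. The resulting $4m$-pairings split into those \emph{disconnected} with respect to the $\gamma/\gamma'$ partition (every pair lies inside one block), which factorize over the two independent averages and reproduce exactly the square of the mean computed in (i), and \emph{crossing} pairings. For each crossing pair $\{a,b\}$ the expansion produces, after separating the $\gamma$- and $\gamma'$-averages, a factor of the form
\[
\int_{\Gamma_n\backslash G}\phi_j^{(n)}(\gamma x_a)\phi_k^{(n)}(\gamma x_b)\,d\gamma \;=\; \varphi_{\lambda_j^{(n)}}(d(x_a,x_b))\,\delta_{jk},
\]
a consequence of the fact that the spherical averaging operator $T_r$ with $r=d(x_a,x_b)$ commutes with every $G$-invariant differential operator and therefore acts by a scalar on each joint eigenspace. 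The Kronecker $\delta$ eliminates one free spectral index per cross pair, producing a gain of at least $N_n^{-1}$ relative to the disconnected contribution; summing over the finitely many pairings then yields $\mathrm{Var}_c[\mathbb{E}_\gamma[\prod_i\Phi_n(x_i)]]\to 0$.

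The principal obstacle is the combinatorial bookkeeping for mixed crossing pairings of arbitrary order: one must verify that for every configuration the Schur-type collapse outlined above produces a strictly smaller power of $N_n$ than the normalization, which in turn requires reducing higher-order frame integrals $\int_{\Gamma_n\backslash G}\prod_i\phi_{j_i}^{(n)}(\gamma x_i)\,d\gamma$ to two-point spherical correlations via iterated $K$-averaging, together with a uniform $L^\infty$-estimate on eigenfunctions in the shrinking window coming from BS-convergence. Once this is in place, (i), (ii), and tightness combine via Chebyshev's inequality to prove that $F_n$ converges in law to the Dirac mass at the isotropic monochromatic Gaussian random wave on $X$ with eigenvalue $\lambda_0$.
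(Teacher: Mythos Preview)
Your approach via direct moment computations and a variance estimate is genuinely different from the paper's, but step (ii) has a real gap. The Schur-type identity you invoke,
\[
\int_{\Gamma_n\backslash G}\phi_j^{(n)}(\gamma x_a)\phi_k^{(n)}(\gamma x_b)\,d\gamma = \varphi_{\lambda_j^{(n)}}(d(x_a,x_b))\,\delta_{jk},
\]
is correct, but it applies only when the \emph{entire} $\gamma$-integrand is a product of two eigenfunctions. For $2m>2$, the $\gamma$-block carries $2m$ factors $\phi_{j_i}(\gamma^{-1}x_i)$; after imposing the pairing constraints from the sphere moment formula these do not decouple, and the frame integral $\int_{\Gamma_n\backslash G}\prod_{i=1}^{2m}\phi_{j_i}(\gamma^{-1}x_i)\,d\gamma$ cannot be reduced to two-point spherical data by ``iterated $K$-averaging'': once one point is fixed, the remaining $2m-1$ points rotate rigidly together under $K$, so no further diagonalisation occurs. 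Thus the claimed $N_n^{-1}$ gain per cross pair is unjustified beyond the case $m=1$, and the variance bound is not established for higher moments. (Even your description of the disconnected terms tacitly relies on these same multi-point integrals to reproduce the mean; they do, but only because the $\gamma$- and $\gamma'$-averages factor as integrals, not because the integrands reduce to spherical functions.)

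The paper avoids all of this by introducing a second process $\beta_{\Gamma_n,I_n}$ in which one first fixes the base point $g\in G/\Gamma_n$ and \emph{then} samples the random coefficient vector. For fixed $g$ the resulting field is exactly Gaussian, hence determined by its covariance $K_{I_n,\Gamma_n}(g^{-1}x,g^{-1}y)$; the local Weyl law (your step (i), essentially) shows this covariance converges to $\varphi_{\lambda_0}(x^{-1}y)$ for typical $g$, so $\beta$ concentrates at $\mu_{\mathrm{Gauss},\lambda_0}$ with no higher-moment work at all. Since the two processes share the same expectation $\mathbb{E}(\alpha)=\mathbb{E}(\beta)\to\mu_{\mathrm{Gauss},\lambda_0}$, and every point in the support of $\alpha$ is $G$-invariant, the \emph{ergodicity} of $\mu_{\mathrm{Gauss},\lambda_0}$ (its extremality among $G$-invariant measures) forces $\alpha$ itself to be the Dirac mass. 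This ergodicity argument is the key idea you are missing; it replaces your entire variance computation and sidesteps the intractable higher-order frame integrals.
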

Note that here again the sequence $(\delta_n)$ depends on $(M_n)$, see Lemma \ref{L:T1abis}.

This theorem is a consequence of Theorem \ref{T1}, see Section \ref{s:randomization}. Note that in this situation we do not need the hypothesis that $(M_n )$ is an expander family. 

One of the main points of this theorem is to go from the convergence of the process $\beta$, which is equivalent to a local Weyl law type of result (see Lemma \ref{L:T1abis} and its proof), to the convergence of the process $\alpha$. This is done using the ergodicity of the Gaussian random wave (Proposition \ref{L:Erg}). We expect that a similar argument can be used to prove an analogue of Theorem \ref{I:T5} in the eigenvalue aspect.

The use of ergodicity to go from a local Weyl law result to a result about \emph{almost all} eigenfunctions (or more precisely here random superpositions) is reminiscent of the phenomenon underlying the quantum ergodicity theorem. Let us recall the heuristic of the proof in the large eigenvalue limit, using the same notation as Theorem \ref{weakQUE}. We denote by $\rho_j$ the microlocal lift of the eigenfunction $\phi_j$ (a probability measure on the unit cotangent bundle $S^*M$ that projects to $\mu_{\phi_j}$ and is asymptotically invariant under the geodesic flow). The local Weyl law says that 
$$ \frac1{N} \sum_{j=1}^N \rho_j \to \omega$$
weakly when $N \to +\infty$, where $\omega$ is the Liouville measure on $S^*M$. By ergodicity of the Liouville measure, it cannot be decomposed as a (finite) convex combination of invariant measures unless they are all equal. This implies that almost every term $\rho_j$ in the sum tends to $\omega$ when $N\to +\infty$ (see \cite{Zel06} for more details on this point of view).

In our case, the lifts considered are given by the BS-sampling and the ergodicity is that of the Gaussian wave. We use a local Weyl law type of argument to show that  $\beta$ converges to the Dirac mass at the Gaussian wave. We then remark that the expected values $\mathbb{E}(\alpha)$ and $\mathbb{E}(\beta)$ are equal, and deduce that $\mathbb{E}(\alpha)$ converges to the Gaussian wave. Seeing the expected value as a convex combination of invariant measures, and using the ergodicity of the Gaussian wave, the limit of $\alpha$ has to be equal to the Dirac mass at the Gaussian wave.

\medskip

As we have tried to emphasise, Benjamini-Schramm convergence provides a unified language for both the level and eigenvalue aspects of Berry's conjecture. In fact, we believe that one of the main interest of the Benjamini-Schramm viewpoint is to naturally lead to many interesting questions and new results. Theorem \ref{I:T5} is an example of such a new result: the formulation of the statement entirely relies on the idea of BS-sampling. We conclude the article by a list of questions suggested by the Benjamini-Schramm viewpoint.

\bigskip

The organization of the paper is as follows. In Section \ref{S:BS} we introduce the notions of (decorated) Benjamini-Schramm convergence on general Riemannian manifolds, and of Invariant Random Subgroups in the case of symmetric spaces. We then define the Gaussian random wave and give some of its properties in Section \ref{S2}. We state and discuss the random wave conjectures in Section \ref{S:berry}, where we also prove that Berry's conjecture implies QUE in the eigenvalue aspect (the content of Theorem \ref{weakQUE}). Sections \ref{s:qes1}, \ref{s:qes2} and \ref{s:qes4} develop the proof of the Quantum Ergodicity theorem in the level aspect (Theorem \ref{T:introQE}). Theorem \ref{I:T5} about random superpositions of eigenfunctions is proved in Section \ref{s:randomization}. Finally, in Section \ref{s:problems} we list some questions and open problems. 

\subsection*{Acknowledgments} We thank Nalini Anantharaman, Farrell Brumley, Alix Deleporte, Maxime Ingremeau, Bart Michels, Mostafa Sabri, Roman Schubert, Nicolas Tholozan and Joe Thomas for interesting comments regarding the topics of this paper. Our deep gratitude goes to an anonymous referee who carefully read our paper and made us numerous corrections and suggestions. \\
M.A. was supported by the ERC Consolidator Grant 648017 and the NKFIH grant K109684. E.L.M. was partially supported by the Marie Sk{\l}odowska-Curie Individual Fellowship grant 703162 while at the University of Bristol, by a Rutherford fellowship at the University of Warwick, and by Initiative d’Excellence Paris//Seine.

\section{Benjamini-Schramm convergence and Invariant Random Subgroups} \label{S:BS} 

Let us start by giving an imprecise but quick explanation on the core notion
of Benjamini-Schramm convergence of finite volume Riemannian manifolds and
also on BS limits of a sequence of finite volume manifolds endowed with
smooth functions. We say that two large volume manifolds $M$ and $N$ are BS close, if for some large radius $R>0$, the distribution of $R$-balls centered at a volume-random point of $N$ and a volume-random point of $M$,
are close. This means that by picking independently randomly a large number
of points in $M$ and $N$, considering the $R$-balls around these points as
metric spaces, and allowing small distortions in the geometry, we can not
statistically distinguish $M$ and $N$. For instance, if both $M$ and $N$ are
finite volume quotients of the same hyperbolic space $X$, and their infimal 
injectivity radius is large as well, then these balls will all be isometric to the corresponding ball in $X$, hence they are close.

This distance notion can be made precise, and then we say that $M_{n}$ is BS
convergent if it converges in this metric. The limiting object is a random
rooted manifold, which can be best seen from the abstract (and precise)
definition below. 

When we also put smooth functions $\phi_{n}:M_{n}\rightarrow R$, the notion of
BS convergence stays the same, but we also look at the decoration of the $R$-balls around random points of $M_{n}$ coming from $\phi_{n}$. So, the limit
will be a decorated random rooted manifold. When the manifolds $M_{n}$
without decoration BS converge to a fixed homegeneous space $X$, it is the
functions $\phi_{n}$ that carry the interesting information on $M_{n}$. In this
case the limit of the manifolds will be the fixed $X$, and $\phi_{n}$ will turn
into a random function on $X$, that, by nature, will be invariant to
isometries of $X$, in distribution.

The limiting functions will sense how $\phi_{n}$ looked locally from a random point of $M_{n}$. We lose some
properties in this limiting process, e.g. the limit of $L^{2}$ functions
with norm $1$, will not be $L^{2}$ in the limit, it will be an invariant
random wave on $X$ and the role of the $L^{2}$ norm will be taken by the
expected value of the square of the value of this wave at the origin of $X$.

Let us now give precise definitions. Consider the space $\mathcal{M}^d$ of pointed, connected, complete Riemannian manifolds of dimension $d$ up to pointed isometries, with 
its {\it smooth topology,} see \cite[\S A.1]{AbertBiringer}.  In this topology, two pointed manifolds $(M,p)$ and $(N,q)$ are close if there are compact subsets of $M$ and $N$ containing large radius neighborhoods of the base points that are diffeomorphic via a map that is $C^\infty$-close to an isometry. More precisely: a sequence of pointed Riemannian manifolds $(M_n , p_n)$ $C^\infty$-converges toward $(M, p)$ if for every radius $R>0$, there is a sequence of maps $f_n : B_M (p,R) \to M_n$ with $f_n (p) = p_n$ such that the Riemannian metric $f^* m_n$ on the metric ball $B_M (p , R)$ inside $M$, pulled back from the Riemannian metric $m_n$ on $M_n$, converges to the restriction of the Riemannian metric $m$ on $M$ in $C^\infty$-topology. It can be proved that the smooth topology on $\mathcal{M}^d$ is induced by a Polish topology, i.e. the space $\mathcal{M}^d$ is separable and completely metrizable; see \cite[\S A.2]{AbertBiringer} or \cite{CandelAlvarezLopezBarralLijo} for a proof of this result; we elaborate on this below. The space $\mathcal{M}^d$ is not compact but Cheeger's compactness theorem implies that the subspace that consists of pointed manifolds $(M,p)$ with uniformly bounded geometry is a compact subspace; see \cite[\S A.1]{AbertBiringer}. 

Suppose $M$ is a compact connected complete Riemannian $d$-manifold. Pushing forward the normalized Riemannian volume measure under the map
$$M \to \mathcal{M}^d , \ p \mapsto (M,p)$$
one obtains a probability measure $\mu_M$ on $\mathcal{M}^d$. Let us observe that the measures thus obtained are particular.

Let $T^1 \mathcal{M}^d$ be the space of isometry classes of rooted unit tangent bundles $(T^1 M , p ,v)$, where $v \in T_p^1 M$. The geodesic flows on individual $T^1 M$ combine to give a continuous flow 
\begin{equation} \label{geodflow}
\mathbf{g}_t : T^1 \mathcal{M}^d \to T^1 \mathcal{M}^d.
\end{equation}
On the other hand, each fiber $T_p^1 M$ of 
$$T^1 \mathcal{M}^d \to \mathcal{M}^d; \ (M,p,v) \mapsto (M,p)$$
comes with a (Liouville) measure $\omega_{M,p}$ induced by the Riemannian metric on $M$. Any measure $\mu$ on $\mathcal{M}^d$ can then be lifted to a measure $\widetilde{\mu}$ on $T^1 \mathcal{M}^d$ defined by the equation $d\widetilde{\mu} = \omega_{M,p} d\mu$. The Liouville measure on the unit tangent bundle of a Riemannian manifold is invariant under the geodesic flow and, similarly, the measure $\widetilde{\mu}_M$ is invariant under the flow $\mathbf{g}_t$. 

Following \cite{AbertBiringer}, we say that a measure $\mu$ on $\mathcal{M}^d$ is {\it unimodular} if $\widetilde{\mu}$ is invariant under \eqref{geodflow}. We refer to \cite{AbertBiringer} for other characterizations of unimodularity.
 
\begin{definition} \label{def7}
A sequence $(M_n )$ of compact connected complete Riemannian $d$-manifolds is convergent in the sense of Benjamini-Schramm, or just \emph{BS-converges}, if the sequence $\mu_{M_n}$ converges in the weak* topology of the set of all unimodular probability measures on $\mathcal{M}^d$.
\end{definition}
Recall that a sequence of probability measures $(\mu_n )$ on $\mathcal{M}^d$ converge to $\mu$ in the {\it weak* topology} if $\int f d \mu_n \to \int f d\mu$ for every bounded, continuous function $f : \mathcal{M}^d \to \R$ (beware that some authors refer to this topology as {\it weak topology}).

\medskip
\noindent
{\it Example.} We will mostly deal with the very particular case where $(M_n)$ is a sequence of compact quotients of a given homogeneous space $X$; e.g., take $X$ to be the hyperbolic plane $\mathbf{H}$ and $(M_n)$ to be a sequence of closed connected hyperbolic surfaces. Since $X$ is homogeneous the pointed space $(X,p)$ is independent of the choice of $p$ and the Dirac measure $\delta_X$ at $(X,p)$ defines a unimodular probability measure on $\mathcal{M}^d$. One can prove that the sequence $(M_n)$ BS-converges toward $\delta_X$ if for every $R>0$, the probability that the $R$-ball centered a random point in $m_n$ is isometric to the $R$-ball in $X$ tends to $1$ when $n \to \infty$; i.e., for every $R>0$, we have 
$$\lim_{n \to \infty} \frac{\vol ((M_n)_{<R}) }{\vol (M_n )} = 0,$$ 
where $M_{<R} = \{ x \in M \; : \; \mathrm{InjRad}_M (x) < R \}$ is the $R$-thin part of $M$; see \cite[Corollary 3.8]{7samurai}. In that case we loosely say that $(M_N)$ BS-converges toward $X$. 

A well-studied example is when $M_n = \Gamma_n \backslash X$ where the groups $\Gamma_n$ form a chain of normal subgroups with trivial intersection in some fixed uniform
lattice of the isometry group of $X$; in this case, the $R$-thin part of $M_n$ is empty for large enough $n$.

\medskip

In general the geometry around a randomly chosen point in $M_n$ has no reason to be closer and closer to a given homogeneous manifold. The limiting object is then a unimodular probability measure on $\mathcal{M}^d$ that precisely encodes what the geometry of $M_n$, for large $n$, looks like near randomly chosen base points. This perspective is elaborated on in Section 3 of \cite{7samurai} and studied in great details in \cite{AbertBiringer}. Elaborating on the compacity theorem of Cheeger already alluded to, it can be proved that the set of all unimodular probability measures on $\mathcal{M}^d$ that are concentrated on pointed manifolds with pinched negative curvature and uniform upper and lower bounds on all derivatives of curvature is weak* compact; see \cite[Theorem 1.10]{AbertBiringer}. We will not use this result. 

\medskip 

One similarly defines the BS-convergence of a sequence $(M_n)$ with functions $\phi_n: M_n \to \R $: consider the space 
$$\mathcal{E}^d = \left\{ (M, p , \phi) \; \left| \;  \begin{array}{l} 
M \mbox{ connected, complete Riemannian $d$-manifolds}, \\ p \in M, \ \phi : M \to \R \mbox{ smooth} \end{array} \right. \right\} \Big/ \begin{array}{l} \mbox{pointed} \\ \mbox{isometries} \end{array}$$  
equipped with its smooth topology where $[M,p, \phi]$ is close to $[N,q, \psi]$ if there are compact subsets of $M$ and $N$ containing large radius neighborhoods of the base points that are diffeomorphic via a map $f$ that is $C^\infty$-close to an isometry and s.t. $\phi$ and $\psi \circ f$ are $C^\infty$-close. 

\begin{prop}
The topological space $\mathcal{E}^d$ has a compatible structure of a Polish space (a complete, separable metric space).
\end{prop}
\begin{proof}
We elaborate on the proof of \cite[Theorem A.9]{AbertBiringer}. There for each positive real $R$ and each integer $k \in \mathbf{N}$ a function 
$d_{R,k} : \mathcal{M}^d \times \mathcal{M}^d \to  \R$ is defined by 
$$d_{R,k} ((M,p) , (N,q)) = \inf \{ \log \lambda \; : \; (N,q) \in \mathcal{N}^k_{R/\lambda , 1/ \lambda, \lambda} (M,p) \}$$
where $\mathcal{N}^k_{R, r, \lambda} (M,p)$ is the set of all $(N,q)$ such that there is a smooth embedding
$$f : B_M (p , R)  \to N $$
with $f(p) = q$ such that the iterative total derivative $D^k f$ is locally $\lambda$-bilipschitz on some $r$-neighborhood of $B_M (p,R)$ in the $k$-fold iterated tangent bundle of $M$. Each $d_{R,k}$ satisfies an (asymmetric) triangle inequality.
Now given a triple $(M,p,\phi)$ that represents a point in $\mathcal{E}^d$ we define $\mathbf{N}^k_{R , r,  \lambda} (M,p)$ to be the set of all $(N,q)$ such that there is a smooth embedding
$$f : B_M (p , R)  \to N $$
with $f(p) = q$ such that the iterative total derivative $D^k f$ is locally $\lambda$-bilipschitz on some $r$-neighborhood $Z$ of $B_M (p,R)$ in the $k$-fold iterated tangent bundle of $M$ and 
$$||D^k (\phi ) - D^k (\psi \circ f) ||_{L^\infty (Z)} \leq \log \lambda.$$
We then define a function $\mathbf{d}_{R,k} : \mathcal{E}^d \times \mathcal{E}^d \to  \R$ by 
$$\mathbf{d}_{R,k} ((M,p , \phi ) , (N,q, \psi )) = \inf \{ \log \lambda \; : \; (N,q) \in \mathcal{N}^k_{R/\lambda , 1/ \lambda,  \lambda} (M,p , \phi ) \}.$$
The proof that $d_{R,k}$ satisfies an (asymmetric) triangle inequality and the triangle inequality for $|| \cdot ||_\infty$ imply that $\mathbf{d}_{R,k}$ also  satisfies an (asymmetric) triangle inequality. 

The subsets of $\mathcal{E}^d$ defined for each triple $(M,p,\phi)$ that represents a point in $\mathcal{E}^d$, $R>0$, $k \in \mathbf{N}$ and $\varepsilon >0$ by 
$$\mathbf{d}_{R,k} ((M,p , \phi ) ,  \cdot ) < \varepsilon$$
form a basis of the topology on $\mathcal{E}^d$. The maps $d_{R,k}$ are not symmetric but the reversed inequalities 
$$\mathbf{d}_{R,k} ( \cdot , (M,p , \phi )) < \varepsilon$$
define a basis for the same topology according to \cite[Lemma A.2]{AbertBiringer}. The function 
$$\mathbf{D} :\mathcal{E}^d \times \mathcal{E}^d \to  \R; \quad \mathbf{D} (x , y  ) = \sum_{k=1}^{+\infty} 2^{-k} \min \{ \mathbf{d}_{k,k} (x,y) , 1 \}$$
then defines a metric on $\mathcal{E}^d$ that induces the topology defined above. 

We now show that $\mathcal{E}^d$ is separable. Since any element of $\mathcal{E}^d$ is a limit of triples $(M,p,\phi)$ where $M$ is a closed Riemannian manifold, it suffices to construct a countable subset of $\mathcal{E}^d$ that accumulates onto every point $[M,p,\phi ] \in \mathcal{E}^d$ with $M$ closed.
 
Now Cheeger compactness theorem implies that there are only countably many diffeomorphism types of closed manifolds so we can work with the subspace of points $[M,p,\phi ] \in \mathcal{E}^d$ with $M$ closed and in a fixed diffeomorphism class. This subspace is the product of $M$ with the space of Riemannian metrics on $M$ and $C^\infty (M)$ (both equipped with the smooth topology). Separability then follows from the separability of $M$ and Weierstrass approximation theorem. 

It remains to show that $(\mathcal{E}^d , \mathbf{D})$ is complete. This follows from a simple adaptation of the proof of \cite[Theorem A.9]{AbertBiringer}.
\end{proof}

\begin{unremark} 
One could similarly prove that the subspace of $\mathcal{E}^d$ that consists of triplet $[M,p, \phi]$ where $(M,p)$ has uniformly bounded geometry and $\phi$ has all its derivatives uniformly bounded from below and from above is compact. 
\end{unremark}

\medskip

We equip the topological space $\mathcal{E}^d$ with the $\sigma$-algebra $\mathcal{B}$ generated by its open sets. A probability measure on $\mathcal{E}^d$ is a $\sigma$-additive function $\mathcal{B} \to [0,1]$ that maps the entire space $\mathcal{E}^d$ to $1$. Recall that a sequence of probability measures $\mu_n$ on $\mathcal{E}^d$ is said to converge in the weak* topology toward a probability measure $\mu$ if for each bounded, continuous real function $f$ on $\mathcal{E}^d$ we have 
$$\lim_{n \to \infty} \mu_n (f) = \mu (f).$$
In practice another characterisation of weak* convergence is useful: since $\mathcal{E}^d$ is a Polish space, the portmanteau theorem \cite[Theorem 2.1]{Billingsley} implies that $(\mu_n )$ weak* converges to $\mu$ if and only if for all continuity sets $A$ of $\mu$ --- i.e. a Borel set whose boundary set has $\mu$-measure $0$ --- we have 
$$\lim_{n \to \infty} \mu_n (A) = \mu (A).$$
Finally note that since $\mathcal{E}^d$ is a Polish space, Prokhorov's theorem \cite[Section 5]{Billingsley} implies that a collection of probability measures on $\mathcal{E}^d$ is relatively compact w.r.t. the weak* topology if and only if it is tight. 

\medskip

As above, if $M$ is a compact connected complete Riemannian $d$-manifold and $\phi : M \to \R$ a smooth map, pushing forward the normalized Riemannian volume measure under the map
\begin{equation} \label{map:pf}
M \to \mathcal{E}^d , \ p \mapsto [M,p , \phi] 
\end{equation}
one obtains a probability measure $\mu_{M , \phi}$ on $\mathcal{E}^d$. 

As with $\mathcal{M}^d$ we shall denote by $T^1 \mathcal{E}^d$ the space of isometry classes of rooted unit tangent bundles colored with a function $(T^1 M , p , v , \phi)$ where $v \in T_p^1M$. Here again it comes equipped with a continuous (geodesic) flow and any measure $\mu$ on $\mathcal{E}^d$ can be lifted to a measure $\widetilde{\mu}$ on $T^1 \mathcal{E}^d$ using the volume form on the fiber. The measure $\widetilde{\mu}_{M_n , \phi_n}$ is not invariant under the geodesic flow unless $\phi_n$ is constant on $M_n$. In Section \ref{S:berry} we consider measures on $\mathcal{E}^d$ whose lifts on $T^1 \mathcal{E}^d$ are invariant under the geodesic flow.

\begin{definition}  \label{def8}
A sequence $(M_n , \phi_n )$, where each $M_n$ is a compact connected complete Riemannian $d$-manifolds and $\phi_n : M_n \to \R$ is smooth, is convergent in the sense of Benjamini-Schramm, or just \emph{BS-converges}, if there exists a probability measure $\mu$ on $\mathcal{E}^d$ such that the sequence $\mu_{M_n , \phi_n}$ converges to $\mu$ in the weak* topology.
\end{definition}

\medskip

In this paper we shall largely focus on sequences of manifolds modeled on a given symmetric space $X=G/K$ as defined in the Introduction, i.e. $X$-manifolds $\Gamma \backslash X$ where $\Gamma \subset G$ is discrete and torsion free. 

Let $\mathrm{Sub}_G$ be the space of closed subgroups of $G$, endowed with its Chabauty topology, see \cite{7samurai}. 

\begin{definition}
An \emph{invariant random subgroup} (IRS) of $G$ is a Borel probability measure $\mu$ on $\mathrm{Sub}_G$ that is invariant under the conjugation action of $G$ on $\mathrm{Sub}_G$. 
\end{definition}

These were first studied in \cite{7samurai}. An important family of IRSs are associated to lattices in $G$. Suppose in particular that $\Gamma \subset G$ is a uniform, torsion free, discrete subgroup of $G$. Pushing forward the invariant probability measure on $G / \Gamma$ by the stabilizer map for the left  action of $G$ on $G / \Gamma$, indeed yields an IRS $\mu_\Gamma$. This is very similar to the construction of $\mu_M$ above, and more generally, there is a dictionary between IRSs of $G$ and unimodular random $X$-manifolds: the map $\Gamma \mapsto (\Gamma \backslash X, \Gamma eK)$ induces a weak*-homeomorphism between the spaces of distributions of discrete, torsion free IRSs of $G$ and of unimodular random $X$-manifolds. In particular a sequence of compact $X$-manifolds $(\Gamma_n \backslash X)$ BS-converges if, and only if, the sequence $(\mu_{\Gamma_n})$ of probability measures on $\mathrm{Sub}_G$ converges in the weak* topology. We refer to \cite[Corollary 3.8]{7samurai} for more details; see also \cite{AbertBiringer}.

\medskip
{\it Examples.} We have recalled in the example after Definition \ref{def7} that if $\Gamma_n$ is a sequence of finite index normal subgroups in a fixed uniform lattice $\Gamma \subset G$:
$$\cdots \Gamma_{n+1} \triangleleft \Gamma_n \triangleleft \cdots \triangleleft \Gamma \quad \mbox{such that} \quad \bigcap_n \Gamma_n = \{ 1 \},$$
then the sequence of compact $X$-manifolds $(\Gamma_n \backslash X)$ BS-converges toward $X$. In terms of IRSs this is equivalent to the fact that the sequence $(\mu_{\Gamma_n})$ of probability measures on $\mathrm{Sub}_G$ converges in the weak* topology toward the Dirac measure supported on the trivial subgroup of $G$. We refer to the latter as the {\it trivial IRS} of $G$. 

When $G$ is of real rank $\geq 2$ and has property (T) --- e.g. if $G = \SL_3 (\R)$ --- it follows from Corollary 4.7 in \cite{7samurai} that for any sequence $(\Gamma_n ) \subset \mathrm{Sub}_G$ such that $\vol (\Gamma_n \backslash X)$ tends to infinity, the sequence $(\mu_{\Gamma_n})$ converges in the weak* topology toward the trivial IRS of $G$. 

This is not true in general --- e.g. if $G = \SL_2 (\R)$, one can model hyperbolic surfaces along graphs with many short loops and construct counterexamples. However in general for any sequence $(\Gamma_n )$ of \emph{congruence} subgroups of $G$ such that $\vol (\Gamma_n \backslash X)$ tends to infinity, the sequence $(\mu_{\Gamma_n})$ converges in the weak* topology toward the trivial IRS of $G$, see \cite[\S 5]{7samurai}.

\medskip

Here again we may decorate a manifold $\Gamma \backslash X$ with a function $\phi : \Gamma \backslash X \to \R$. In fact working with IRSs it is more natural to decorate a quotient $\Gamma \backslash X$ with a $\Gamma$-invariant function $\phi : X \to \R$.\footnote{Note that this is stronger, even fixing the base point $\Gamma eK$ there is no canonical lift to $X$ of a function on $\Gamma \backslash X$, one needs to fix a frame.}  We shall therefore rather work with the space of decorated closed subgroups:
$$\widehat{ \mathrm{Sub}}_G  = \{ (H , \phi ) \; : \; H \in \mathrm{Sub}_G \mbox{ and } \phi \in C^{\infty} (X ) \  H\mbox{-invariant} \}$$
with topology induced by the product of the Chabauty topology on $\mathrm{Sub}_G$ and the $C^\infty$-topology on $C^\infty (X)$. 

To $(\Gamma , \phi )$ corresponds the map 
$$G / \Gamma \to \widehat{ \mathrm{Sub}}_G ; \quad g\Gamma \mapsto (g\Gamma g^{-1}  , \phi (g^{-1} \cdot )  ).$$
We shall denote by $\mu_{\Gamma , \phi}$ the push forward of the invariant probability measure on $G / \Gamma$. We may furthermore push forward this measure using the map 
$\widehat{ \mathrm{Sub}}_G \to C^\infty (X)$ and obtain an invariant probability measure $\mu_\phi$ on $C^\infty (X)$. 

Note that if $\phi$ is of norm $1$ on $\Gamma \backslash G$, i.e.
$$\frac{1}{\mathrm{vol} (\Gamma \backslash G)} \int_{\Gamma \backslash G} |\phi|^2 = 1$$
then $\mu_{\Gamma , \phi}$ and $\mu_\phi$ satisfy the following normalisation property:
\begin{equation} \label{energie}
\begin{split}
\int_{\widehat{ \mathrm{Sub}}_G} |\psi (eK) |^2 d\mu_{\Gamma , \phi} (H , \psi) & = \int_{C^\infty (X)} |\psi (eK) |^2 d\mu_{\phi} (H\psi) \\ 
& = \frac{1}{\mathrm{vol} (\Gamma \backslash G)} \int_{G/\Gamma} |\phi (g^{-1})|^2 dg =1.
\end{split}
\end{equation}

\begin{definition} \label{def10}
A sequence $(\Gamma_n \backslash X, \phi_n )$, where each $\Gamma_n \backslash X$ is a compact $X$-manifold and each $\phi_n$ is a smooth $\Gamma_n$-invariant function on $X$, is \emph{weakly convergent}, if there exists a probability measure $\mu$ on $\widehat{ \mathrm{Sub}}_G$ such that the sequence $\mu_{\Gamma_n , \phi_n}$ converges to $\mu$ in the weak* topology. 
\end{definition}
When moreover $(\Gamma_n \backslash X)$ BS-converges to $X$, we will often abusively identify the limit measure with the weak limit $\lim \mu_{\phi_n}$ --- a $G$-invariant measure on $C^\infty (X)$.

\begin{unremark} 
It is natural to expect that if $(\Gamma_n \backslash X, \phi_n )$ is a weakly converging sequence in the sense of Definition \ref{def10} it BS-converges in the sense of Definition \ref{def8} but we have not written a proof. 
\end{unremark}

\section{Gaussian fields on symmetric spaces} \label{S2}

\subsection{Gaussian fields}

Let $M$ be a smooth manifold. A smooth random field\footnote{In most of the paper we shall rather work with the probability measure on $C^\infty (M)$ which associates to a measurable subset $A \subset C^\infty (M)$ the non-negative number $\mathbb{P} (F \in A )$.} $F$ on $M$ is said to be {\it Gaussian} if for every $n$ and for every $n$-tuple of points 
$(x_1 , \ldots , x_n ) \in M^n$, the vector $(F(x_1) , \ldots , F (x_n )) \in \R^n$ has a Gaussian distribution. We refer to \cite{Hida} for details. 

In practice we will only deal with spaces $M=X$ equipped with a transitive action of a Lie group $G$ and we will only consider $G$-invariant random fields.\footnote{Equivalently $G$-invariant measures on $C^\infty (X)$.} We shall say that an invariant Gaussian random field is {\it standard} if for every $x \in X$, we have:
$$\mathbb{E} [F(x)] = 0 \mbox{ and } \mathbb{E} [F(x)^2] =1 .$$
Such random fields are determined by their {\it covariance kernel} 
$$\mathbb{E} [F(x) F(y)].$$

We shall see in the next paragraphs that the relationship with representation theory naturally leads to consider covariance function with complex values. Say that a {\it standard complex Gaussian field on $X$} is a complex-valued random field $F$ on $X$ whose whose real and imaginary parts are independent, identically distributed real Gaussian field and such that 
\begin{enumerate}
\item the map $(x,y) \mapsto \mathbb{E} [F(x) F(y)]$ is identically zero, and
\item the {\it covariance kernel} $(x,y) \mapsto \mathbb{E} [F(x) \overline{F(y)}]$ is constant, equal to $1$, on the diagonal $x=y$.
\end{enumerate}

\medskip
\noindent
{\it Example.} The {\it invariant (or isotropic) monochromatic Gaussian random Euclidean wave with eigenvalue $\mu^2$} is the standard complex Gaussian random field $F_{{\rm unif}, \mu} : \R^d \to \C$ whose covariance kernel is 
$$\mathbb{E} [F_{{\rm unif}, \mu} (x) F_{{\rm unif}, \mu} (y)] = \int_{\mathbf{S}^{d-1}} e^{i \mu \langle y-x , \xi \rangle} d \xi$$
where $\langle , \rangle$ is the standard scalar product on $\R^d$. 
In dimension $2$, one may also describe $F_{{\rm unif}, \mu}$, in polar coordinates, as 
$$F_{{\rm unif}, \mu} (r,\theta ) = \sum_{n \in \Z} c_n J_{|n|} (\mu r) e^{in \theta }$$
where $J_n$ is the $n$-th Bessel function, and $(c_n )_{n \in \Z}$ are standard complex Gaussians whose real and imaginary parts are independent. By construction the probability measure $\mu_{{\rm unif}, \mu}$ associated to the random field $F_{{\rm unif}, \mu}$ is supported on complex-valued fonctions $u$ such that $\Delta u = \mu^2 u$, in other words $F_{{\rm unif}, \mu} $ is almost surely a $\mu^2$-eigenfunction of the (geometric) Laplace operator $\Delta$. In particular $F_{{\rm unif}, \mu} $ is almost surely a smooth function on $\R^d$. 
  
\medskip

We shall now similarly define invariant monochromatic Gaussian random wave on symmetric spaces of non-compact types. We first set some notations.

\subsection{Notation}\label{s:notation}
Let $G$ be a non-compact real connected simple Lie group with associated symmetric space $X=G/K$. Fix a left invariant, bi-$K$-invariant metric on $G$.

Write $\g$ and $\kg$ for the Lie algebras of $G$ and $K$ and $\pg$ for the orthocomplement of $\kg$ with respect to the Killing form of $\g$ and $\ag$ for a maximal abelian subspace of $\pg$. Using a subscript $\C$ to denote 
complexifications, let $\mathcal{C} \subset i\ag^* \subset \ag_{\C}^*$ be the Weyl chamber corresponding
to a choice of positive roots for $(\g_\C , \ag_\C )$, and let $\rho$ be the corresponding half-sum
of positive roots. The direct sum of real root spaces for the chosen positive roots is a
Lie subalgebra, say $\nn$, of $\g$, and if $A$ and $N$ are the subgroups $\exp_G (\ag )$ and $\exp_G (\nn )$ of $G$, the map $(n, a, k) \mapsto nak$ is a diffeomorphism between $N \times A \times K$ and $G$ (Iwasawa decomposition). When the Iwasawa decomposition of an element $x \in G$ is $n \exp_G (H) k$ we write $H(x)=H$ for the $\ag$-component.

\medskip
\noindent
{\it Example.} The two main models of the hyperbolic plane are the Poincaré upper half-plane $\mathbf{H}$ and the unit disc $\D$. In the first case on can take $G = \SL_2 (\R)$ and in the latter $G =\mathrm{PSU}(1,1)$. Some computations are simpler in one model and some are simpler in the other one. We will switch between the two model leaving the reader and the context to decide whether $G = \SL_2 (\R)$ or $\mathrm{PSU}(1,1)$.  In both cases $K$ is isomorphic to $\SO_2$ and we can identify $\ag^*$ with $\R$ and $\mathcal{C}$ with $i \R_{>0}$. Using this identification $\rho \in \ag^*$ is equal to $\frac12 \in \R$. 

Let $g \in G=\SL_2 (\R)$. Writing 
$$g = \left( \begin{array}{cc} a & b \\ c & d \end{array} \right) \quad \mbox{and} \quad \frac{ai+b}{ci+d} = x + i y, $$ 
the Iwasawa decomposition of $g$ is
$$g = \left( \begin{array}{cc} 1 & x \\ 0 & 1 \end{array} \right) \left( \begin{array}{cc} y^{\frac12} & 0 \\ 0 & y^{-\frac12}  \end{array} \right) K$$
so that $H(g) = \frac12 \log y$.  

\medskip

\subsection{Spherical functions}
Suppose $s$ is in $\ag^*_\C$ and $b$ is in $K$. Define
$$e_{s , b} : G \to \C ; \ g \mapsto e^{-(s +\rho) (H (g^{-1}b ) )}.$$ 
Then $e_{s , b}$ defines a smooth function on $X$ that is an eigenfunction of the Laplace operator on $X$,
with eigenvalue $\|s\|^2 + \|\rho \|^2$. It plays the role of the exponentials $e^{i \mu \langle x-y , \xi \rangle}$ on $\R^d$. 

If $(s_1, b_1)$ and $(s_2, b_2)$ are elements of $\ag^*_\C \times K$, then $e_{s_1 , b_1}$ and $e_{s_2 , b_2}$ coincide if and
only if there is an element $w$ in the Weyl group of $(\g_\C, \ag_\C )$ such that $s_1 = w s_2$ and if $b_1$ 
and $b_2$ have the same image in the quotient $B = K/M$, where $M$ is the centralizer of $\ag$ in
$K$. Each of the $e_{s , b}$ thus coincides with exactly one of the $e_{s^+ , b}$'s, where $s^+$ runs through
the closure $\Lambda^+$ of $\mathcal{C}$ in $i\ag^*$.

\medskip
\noindent
{\it Example.} When $X$ is the hyperbolic plane then $B$ is the circle at infinity. In the disc model $\mathbf{D}$, for an eigenvalue $1/4 + r^2 \in \R$ associated to $s = ir$, we have 
$$e_{s , b} (z) = e^{\left( \frac12 + ir \right) \langle z , b \rangle} \quad (z \in \mathbf{D} , \ b \in B ),$$
where $\langle z , b \rangle$ is the signed distance to $0$ of the horocycle through the points $z$ and $b$, so that 
$e^{\langle z , b \rangle} = \frac{1-|z|^2}{|z-b|^2}$ is the Poisson kernel of the unit disc. In the upper half plane model, if $b$ is taken to be the point at infinity, we have $e_{s , b} (z) = y^{1/2 +ir}$. 

\medskip
We call \emph{spherical function} a bi-$K$-invariant function on $G$ (or in other words a function on $K \backslash X = K \backslash G / K$) that is also an eigenfunction of the Laplace operator on $X$. A theorem of Harish-Chandra \cite{HC} states that for each $s \in \Lambda^+$, 
$$\varphi_s : x \in G \mapsto \int_B e_{s , b} (x) db$$
is a spherical function. And every spherical function for the pair $(G,K)$ is of this form. In particular $\varphi_s$ is the only spherical function of eigenvalue $\|s\|^2 + \|\rho \|^2$ such that $\varphi_s(e) = 1$.

\begin{definition} \label{d:g1}
The {\it invariant (or isotropic) monochromatic Gaussian random wave with parameter $s \in \Lambda^+$ on $X$} is the standard complex Gaussian random field $F_{{\rm unif}, s} : X \to \C$ whose covariance kernel is 
$$\mathbb{E} [F_{{\rm unif}, s} (x) F_{{\rm unif}, s} (y)] = \varphi_s (x^{-1} y). $$
\end{definition}

\medskip
\noindent
{\it Example.} When $X$ is the hyperbolic plane and $s = ir$, then in the disc model we have 
$$\varphi_s (z) = \Phi_{r,0} (z),$$
where we more generally denote by $\Phi_{r,n}$ the family of generalized spherical functions defined in the disc model by 
$$e^{\left( \frac12 + ir \right) \langle z , b \rangle} = \sum_{n \in \Z} \Phi_{r,n} (z) b^n, \quad b \in B;$$ 
see \cite[Theorem 4.16]{Helgason}.

As it follows from the proof of \cite[Theorem 4.2]{Helgason} one could then alternatively describe $F_{{\rm unif}, s}$ as 
$$F_{{\rm unif}, \mu} (z) =  \sum_{n \in \Z} c_n \Phi_{r,n} (z)$$
where $(c_n )_{n \in \Z}$ are standard complex Gaussians whose real and imaginary parts are independent. 
 
\medskip

Note that it makes sense to study monochromatic Gaussian random waves in the discrete setting, as well, for instance for a regular tree, which is a symmetric space for its automorphism group. This direction has been initiated by Csoka, Gerencser, Harangi and Virag in \cite{CGHV} who used the notion to give new bounds on the independence ratio of random regular graphs. The deepest result in this direction for now is due to Backhausz and Szegedy \cite{BS}, who proved the Berry-type result that any (almost) eigenvector of a large random $d$-regular graph is BS-close to the monochromatic Gaussian eigenwave on the tree. 

We shall now provide more details on the construction of these random fields. 

\subsection{A Gaussian field on $B$}
The complex topological vector space $\mathcal{D} (B) = C^{\infty} (B )$ (equipped with the $C^\infty$-topology).  Let $\mathcal{D} ' (B )$ be the space of distributions, or continuous linear functionals $T$ on $\mathcal{D} (B )$. Since $\mathcal{D} (B)$ is a nuclear space, we may follow Hida \cite[\S 6.2]{Hida} and define the complex white noise in that context as a probability space $(\mathcal{D} ' (B ) , \mathcal{B} , \mu)$. Here $\mathcal{B}$ is the cylindrical $\sigma$-algebra on $\mathcal{D}' (B)$, i.e. the smallest $\sigma$-algebra such that 
for any $f \in \mathcal{D} (B)$, the function 
$$X_f : \mathcal{D} '(B) \to \C ; \ T \mapsto T(f)$$
is mesurable. Then $X_f$ defines a random variable on $(\mathcal{D} ' (B ) , \mathcal{B} , \mu)$ such that 
$$\mathbb{E} [ X_f ] = \int_{\mathcal{D} '(B)} T(f) d \mu (T) = 0 \quad \mbox{and} \quad \mathbb{E} [|X_f |^2 ] =  \int_B |f|^2 db;$$
see \cite[Proposition 6.7]{Hida}. 

From this we get that the covariance matrix for the collection $(X_f)_{f\in\mathcal D(B)}$ is given by 
\begin{equation}\label{e:whitecov}
\text{Cov}(X_f, X_g) = (f,g),
\end{equation}
with
\begin{equation}\label{scalar}
(f,g) =  \int  f(b) \overline{g(b)} \, db=  \iint \delta(b-b') f(b) \overline{g(b')} \, dbdb', 
\end{equation}
meaning that the covariance kernel is $\delta(b-b')$.

\subsection{Actions of $G$} 
Let $s \in \Lambda^+$. Denote by $\pi_{s}$ the compact picture of the induced spherical representation associated to $s$. It is the representation of $G$ in $L^2 (B)$ given by 
$$[ \pi_{s } (g) f ] (b ) = e^{( -s - \rho) H (g^{-1} b)} f (g^{-1} b) .$$
Since $s$ is imaginary, the representation $\pi_s$ is unitary. In other words it preserves the scalar product \eqref{scalar}; see e.g. \cite[Chap. VII, \S 2]{Knapp}. It extends to a representation of $G$ on $\mathcal{D} ' (B)$.

Since all the actions $\pi_s$ ($s \in \Lambda^+$) preserve the scalar product \eqref{scalar} they preserve the characteristic functional of the white noise. The measure $\mu$ is therefore $G$-invariant under all the $\pi_s$ ($s \in \Lambda^+$). It follows from the next proposition that $\mu$ is an \emph{ergodic} invariant measure. In fact much more is true:

\begin{prop} \label{L:Erg}
For any $s \in \Lambda^+$, the unitary representation $\Pi_s$ of $G$ in $L^2 (\mathcal{D}' (B) , \mu)$ induced by the action of $G$ on $\mathcal{D} '(B)$ by $\pi_s$ is mixing, i.e. for all $F_1 , F_2 \in L^2 (\mathcal{D}' (B) , \mu)$ we have
$$\int_{\mathcal{D} (B')} (\Pi_s (g) F_1 ) F_2 \ d \mu  \longrightarrow \left(  \int_{\mathcal{D}' (B)} F_1 \ d \mu \right) \left(  \int_{\mathcal{D} ' (B)} F_2 \ d \mu \right)$$
as $g$ tends to infinity in $G$. 
\end{prop}
\begin{proof} First note that the span of monomials 
\begin{equation} \label{monomials}
X_{f_1} \cdots X_{f_m} : \mathcal{D} ' (B) \to \C ; \ T \mapsto T( f_1) \cdots T (f_m) \quad (f_1 , \ldots , f_m \in \mathcal{D} (B))
\end{equation}
is dense in $L^2 ( \mathcal{D}'  (B) , \mu)$, see e.g. \cite[Corollary 1, p. 135]{Hida}. 

Now, for all $g \in G$ and $F \in L^2 ( \mathcal{D}'  (B) , \mu)$ we have
$$\Pi_s (g) F  = F \circ \pi_s (g^{-1}).$$
Since $\mu$ is $\pi_s (G)$-invariant, it follows that 
$$||\Pi_s (g) F ||_{L^2 ( \mathcal{D}'  (B) , \mu)}^2 = || F ||_{L^2 ( \mathcal{D}'  (B) , \mu)}^2.$$
So that approaching $F_1$ and $F_2$ by two linear combinations $F_1^{(n)}$ and $F_2^{(n)}$ of monomials \eqref{monomials}, we have
\begin{multline*}
\left| \int_{\mathcal{D}' (B)} (\Pi_s (g) F_1 ) F_2 d \mu - \int_{\mathcal{D}' (B)} (\Pi_s (g) F_1^{(n)} ) F_2^{(n)} d \mu \right| \\
\begin{split}
& \leq 
\left| \int_{\mathcal{D} ' (B)} (\Pi_s (g) ( F_1 -F_1^{(n)}) F_2 d \mu \right| + \left| \int_{\mathcal{D}' (B)} (\Pi_s (g) F_1^{(n)} ) (F_2 -F_2^{(n)} ) d \mu \right| \\
& \leq || F_1 -F_1^{(n)} ||_{L^2 ( \mathcal{D}'  (B) , \mu)} ||F_2||_{L^2 ( \mathcal{D}'  (B) , \mu)} \\
& \quad \quad + ||F_1^{(n)}||_{L^2 ( \mathcal{D}'  (B) , \mu)} ||F_2 -F_2^{(n)} ||_{L^2 ( \mathcal{D}'  (B) , \mu)}.
\end{split}
\end{multline*}
We are therefore reduced to proving Proposition \ref{L:Erg} when both $F_1$ and $F_2$ are monomials. 

Finally, for all $f_1 , \ldots , f_m, h_1 , \ldots , h_n$ in $\mathcal{D} (B)$ and $g \in G$, we evaluate
\begin{multline} \label{E:wick}
\int_{\mathcal{D} (B')} (\Pi_s (g) \cdot X_{f_1} \cdots X_{f_m} ) X_{h_1} \cdots X_{h_n} d \mu \\ 
= \mathbb{E} [ X_{\pi_s (g^{-1}) f_1 } \cdots X_{\pi_s (g^{-1}) f_m } X_{h_1} \cdots X_{h_n } ]
\end{multline}
using Isserlis' Theorem, a.k.a. Wick's probability Theorem. The latter indeed implies that the right hand side of \eqref{E:wick} is equal to 
\begin{multline*}
 \sum_{i=1}^m \sum_{j=1}^n \left( \mathbb{E} [X_{\pi_s (g^{-1}) f_i } X_{h_j}]  \right. \\ \left. \cdot \mathbb{E} [ X_{\pi_s (g^{-1}) f_1 } \cdots \widehat{X_{\pi_s (g^{-1}) f_i }} \cdots X_{\pi_s (g^{-1}) f_m } X_{h_1} \cdots \widehat{X_{h_j}} \cdots X_{h_n }] \right) \\ +\mathbb{E} [X_{\pi_s (g^{-1}) f_1 } \cdots X_{\pi_s (g^{-1}) f_m } ] \mathbb{E} [X_{h_1} \cdots X_{h_n }].
\end{multline*}
By Howe--Moore Theorem \cite[Theorem 2.2.20]{Zimmer} the matrix coefficients of $\pi_s$ vanish at infinity so that all the factors 
$$\mathbb{E} [X_{\pi_s (g^{-1}) f_i } X_{h_j}] = (\pi_s (g^{-1}) f_i , h_j )$$
tend to $0$ as $g$ tends to infinity. We conclude that, as $g$ tends to infinity, the integral
$$\int_{\mathcal{D} (B')} T(\pi_s (g^{-1}) f_1 ) \cdots T(\pi_s (g^{-1}) f_m ) T(h_1 ) \cdots T (h_n ) d \mu (T) $$
tends to 
\begin{equation*}
\mathbb{E} [X_{\pi_s (g^{-1}) f_1 } \cdots X_{\pi_s (g^{-1}) f_m } ] \mathbb{E} [X_{h_1} \cdots X_{h_n }]  = \mathbb{E} [X_{ f_1 } \cdots X_{f_m } ] \mathbb{E} [X_{h_1} \cdots X_{h_n }] 
\end{equation*} 
that is equal to
$$\left( \int_{\mathcal{D}' (B)} T( f_1 ) \cdots T(f_m ) d \mu (T) \right) \left( \int_{\mathcal{D} '(B)} T(h_1 ) \cdots T (h_n ) d \mu (T) \right).$$
This proves Proposition \ref{L:Erg} for monomials.
\end{proof}

\subsection{Gaussian random waves}
Let $\mathcal{E} (X)$ be the space of smooth functions on $X$ equipped with smooth topology. 
 
Given $s \in \Lambda^+$ we denote by $\mathcal{E}_s$ the closed subspace of $\mathcal{E} (X)$ generated by the translates of $\varphi_s$ under the left regular $G$-action. 

The map 
$$p_s :  \mathcal{D} '(B) \to \mathcal{E}_{s}; \ T \mapsto \int_B e_{s , b} dT(b)$$  
is $G$-equivariant with respect to the $G$-action on $\mathcal{D}' (B)$ induced by $\pi_{s}$ and the (left-regular) $G$-action on $\mathcal{E}_{s}$. 

\begin{definition}\label{d:gauss}
Pushing forward the probability space $(\mathcal{D} ' (B) , \mathcal{B} , \mu)$ by $p_s$ yields a probability space 
$$( \mathcal{E}_{s} , (p_s)_* (\mathcal{B} ) , \mu_{{\rm Gauss}, s} ),$$
the {\it Gaussian random wave} associated to $s \in \Lambda^+$ on the symmetric space $X=G/K$. 
\end{definition}

Gaussian random waves are probability measures invariant under the isometry group of the symmetric space. A similar construction has been considered by Afgoustidis \cite{Afgoustidis}. Note that it follows from Proposition \ref{L:Erg} that the $G$-invariant measure $\mu_{{\rm Gauss} , s}$ is ergodic under the $G$-action. 

We conclude this section by checking that Definitions \ref{d:gauss} and \ref{d:g1} agree: fix a function $f$ in the space $\mathcal D(X)$ of smooth compactly supported functions with the usual Schwartz topology (see Helgason \cite[p. 239]{Helgason}). By construction, the function
$$X^s_f : \mathcal E_s \to \C, \ g \mapsto (f,g)_{L^2 (X)} = \int_X f (x) \overline{g (x)} dx.$$ 
is $(p_s)_* (\mathcal{B} )$-mesurable and therefore defines a (complex) random variable on 
$$( \mathcal{E}_{s} , (p_s)_* (\mathcal{B} ) , \mu_{{\rm Gauss}, s} ).$$

\begin{prop}\label{l:covkernel}
The random variables $X^s_f$ have a centered (i.e. mean $0$) Gaussian distribution. The covariance is given by
$${\rm Cov}(X^s_f, X^s_g) = \iint \varphi_s(x^{-1}y) f(x) \overline{g(y)} \, dx dy.$$
The covariance kernel of the Gaussian field is therefore $\varphi_s(x^{-1}y)$, where $\varphi_s$ is the spherical function.
\end{prop}
\begin{proof}
The fact that $X^s_f$ is complex Gaussian follows from the fact that the process associated to the white noise $\mu$ is Gaussian. We can also check directly that both the real and imaginary parts of $X^s_f$ are real Gaussian. To do so we would to re-do the above construction with real functions (replacing in particular $e_{s,b}$ by its real and imaginary parts) and compute:
\begin{align*}
\mathbb{E} \left[ \exp (i t X^s_f) \right] &= \int_{\mathcal E_s} \exp(i t (f,g)_{L^2 (X)}) \, d\mu_{{\rm Gauss}, s} \\
&= \int_{\mathcal D'(B)} \exp \left( i t \int_X f(x) \int_B  e_{s,b} (x) \,dT(b)  \, dx \right) d\mu(T)\\
&= \int_{\mathcal D'(B)} \exp \left[ i   \int_B \left( t \int_X f(x) e_{s,b} (x)  \, dx \right) \,dT(b)\right] d\mu(T)\\
&= \exp \left[   -\frac{t^2}{2} \int_B \left(  \int_X f(x) e_{s,b} (x)  \, dx \right)^2 \,db\right],
\end{align*}
since 
$$f \mapsto C(f ) = \exp \left[ - \frac12 \int_B f^2 db \right]$$
is the characteristic functional of the white noise on $C^{\infty} (B , \R)$.

To compute the covariance kernel we write
\begin{align*}
 {\rm Cov}(X^s_f, X^s_g) &=  \mathbb{E}(X^s_f \overline{X^s_g}) \\
 &= \int (f, u)_{L^2 (X)} \overline{(g,u)_{L^2 (X)}} \, d\mu_{{\rm Gauss}, s} (u) \\
 &= \int\limits_{\mathcal D'(B)} \int\limits_X \overline{p_s(T)(x)} f(x) \, dx \int\limits_X  p_s(T) (y) \overline{g(y)} \, dy \, d\mu(T)\\
 &= \iint_{X\times X} \left( \int_{\mathcal D'(B)} \overline{p_s(T)(x)} p_s(T)(y) d\mu(T) \right) f(x) \overline{g(y)} \, dxdy ,
\end{align*}
and, by \eqref{e:whitecov}, the covariance kernel is given by
$$\int_B \overline{e_{s,b}(x)} e_{s,b}(y) \, db.$$
On the other hand we have:
$$\varphi_s (x^{-1} y) = \int_K e^{-(s + \rho) H(y^{-1} x k)} dk.$$
To conclude the proof we shall use the following identity. Let $g$ and $h$ be two elements in $G$ and let $k$ be an element in $K$. Writing $gk = u(gk) H(gk) n \in KAN$ we obtain, since $A$ normalizes $N$, 
\begin{equation} \label{eqH}
H(hgk) = H(gk) + H (h u(gk)).
\end{equation} 
It follows that 
$$\varphi_s (x^{-1} y) =  \int_K e^{-(s + \rho) H(xk)} e^{-(s + \rho) H(y^{-1} u(xk))} dk.$$
But, by \eqref{eqH}, we have $H(xk) = - H(x^{-1} u(xk))$ and therefore
$$\varphi_s (x^{-1} y) =  \int_K e^{(s + \rho) H(x^{-1} u(xk))} e^{-(s + \rho) H(y^{-1} u(xk))} dk.$$
Now the mapping $F_x : k \mapsto u(xk)$ is a diffeomorphism of $K$ and $(F_x)^* (dk) = e^{-2\rho H(x^{-1} k)}$. It follows that
\begin{align*}
\varphi_s (x^{-1} y) & =  \int_K e^{(s + \rho) H(x^{-1}k)} e^{-(s + \rho) H(y^{-1}k)} e^{-2\rho H(x^{-1} k)}dk \\
& = \int_K e^{(s - \rho) H(x^{-1}k)} e^{-(s + \rho) H(y^{-1}k)} dk \\
& = \int_B \overline{e_{s,b}(x)} e_{s,b}(y) \, db.
\end{align*}
Here we have used that $s \in i \mathfrak{a}^*$ so that $\overline{s} =- s$.
\end{proof}

\section{Random waves: Berry type conjectures} \label{S:berry}

\subsection{Berry's conjecture in BS form} Let $M$ be a $d$-dimensional closed Riemannian manifold. Recall from the introduction that we denote by $M_r$ the rescaling of $M$ by $r$, that is, we multiply every distance by $r$. A Riemannian metric is infinitesimally Euclidean, it follows from the definitions that, as $r\rightarrow \infty$, the sequence $(M_r)$ BS-converges toward the Dirac measure at $(\R^{d} , 0) \in \mathcal{M}^d$. Note that $\R^d$ being homogeneous, the limit measure does not depend on a particular choice of base point. We loosely say that $(M_r)$ BS-converges toward $\R^d$. 

Now, if $\phi:M\rightarrow \R$ is an eigenfunction of the
Laplace operator on $M$, then the very same function $\phi : M_r \rightarrow \R$ is an eigenfunction of the Laplacian on $M_r$
with eigenvalue $\lambda^{\prime}=\lambda / r^{2}$.  We conclude that if $\phi_n$ is a sequence of eigenvectors for the Laplace operator, with eigenvalues $\lambda_n = \alpha_n^2$, then any weak limit of $\mu_{M_{\alpha_n} , \phi_n }$ is supported in
$$\{ [\R^d , p , \psi ] \in \mathcal{E}^d \; : \; p \in \R^d , \ \Delta \psi = \psi \}.$$ 
We therefore loosely identify such a weak limit with a random field on $\R^d$. We may now recall our version --- Conjecture \ref{BerryConj} --- of Berry's conjecture.

\medskip
\noindent
{\bf Conjecture 1.} {\it Let $M$ be a compact, negatively curved
manifold. Let $(\phi_{n} )$ be an orthonormal basis of $L^2 (M)$ that consists of eigenvectors for the Laplace operator, with eigenvalues $\lambda_n = \alpha_n^2$. Then $(M_{\alpha_n}, \phi_{n})$ BS converges to $(\R^d,F_{{\rm unif} , 1})$ --- the isotropic monochromatic Gaussian random Euclidean wave with eigenvalue $1$.}

\subsection{Relation with QUE} \label{S:relQUE}

Let $M$ be a compact $d$ dimensional manifold with normalized volume form $d\mathrm{vol}_M$. Let $\phi_{n}$ be a sequence of eigenfunctions of eigenvalue $\lambda_{n} = \alpha_n^2$ of
$L^{2}$-norm $1$. Denote by $\nu_{n}$ the probability measure on $M$ defined by the density function
$\phi_{n}^{2}$. Let $X_{n}$ be the value of $\phi_{n}^{2}$ at a $d\mathrm{vol}_M
$-random point of $M$. Then $X_{n}$ is a bounded random variable with distribution
$\rho_{n}$ that is equal to the probability measure on $\R^{+}$ obtained as the pushforward of $\mu_{M_{\alpha_n} , \phi_n }$ by the map 
$$\mathcal{E}^d \to \R^+; \quad (M , p , \phi) \mapsto \phi (p)^2,$$  .

Let $M_{n}=M_{\alpha_{n}}$ be the rescaled manifold. Assume that a weak form of Conjecture 1 holds, namely that the sequence $(M_{n},\phi_{n})$ BS converges to $(\R^{d},F)$ where $F$ is an invariant random 
(not necessarily Gaussian) Euclidean eigenwave with eigenvalue $1$. Also assume that $\nu_{n}$ weakly converges to some probability measure $\nu$ on $M$. Since the $\phi_{n}$ are smooth, each measure $\nu_{n}$ is 
absolutely continuous with respect to $d\mathrm{vol}_M$. Let 
$$\nu=\nu_{c}+\nu_{s}$$ 
be the continuous-singular decomposition of $\nu$ with respect to $d\mathrm{vol}_M$ (Lebesgue's decomposition Theorem).

By BS convergence, the sequence of random variables $X_{n}$ weakly converges to the random variable $F(0)^2$. Note that since $F$ is a random function the expression $F(0)^2$ is a random variable in $\R^+$. The distribution $\rho$ of the random variable $F(0)^2$ is the weak* limit of $\rho_{n}$,\footnote{In this paragraph we use $\rho$ to denote a distribution; this has no relation with the $\rho$ of the previous section that will not appear here.} and we have
\[
\int xd\rho_{n}=\mathbb{E}X_{n}=1\text{. }%
\]
Let the energy of $F$ be
\[
e(F)=\mathbb{E}F^{2}(0)=\int xd\rho\text{.}%
\]
Since $\rho_{n}$ weak* converges to $\rho$, we have $e(F)\leq1$. A convenient
way to express the possible deficit is as follows. Let 
$$\tau_{n}=(\phi_{n}^2)_* \nu_{n}$$ 
be the push-forward measure of $\nu_{n}$ by $\phi_{n}^{2}$,
that is, let
\[
\tau_{n}(A)=\nu_{n}((\phi_{n}^{2})^{-1}(A))\text{ \ (}A\subseteq\R^{+}\text{ Borel).}%
\]
Then
\begin{equation} \label{A}
d\tau_{n}(x)=xd\rho_{n}(x)\text{ (}x\in\R^{+}\text{).} 
\end{equation}
Let us compactify $\R^{+}$ by adding $\infty$, call this space
$[0,\infty]$. Then $\tau_{n}$ is a probability measure on $[0,\infty]$ and by \eqref{A} the sequence $(\tau_{n})$ weakly
converges to the probability measure $\tau$ on $[0,\infty]$, defined by 
\[
d\tau(x)=xd\rho(x)\text{ }(x\in\R^{+})\text{.}
\]
The deficit $1-e(F)$ will be equal to the `amount of mass traveling to
$\infty$', that is, we have
\begin{equation} \label{B}
1-e(F)=\tau(\{\infty\})=\lim_{K\rightarrow\infty}\liminf_{n\rightarrow\infty
}\tau_{n}([K,\infty]) . 
\end{equation}
It would be desirable to write $\lim$ instead of $\liminf$ above but we do
not control how weak convergence of $\tau_{n}$ evolves at the point $K$.

Before proving (the stronger) Theorem \ref{weakQUE} we note that the singular part of $\nu$ has mass at most the loss of
energy in the limit:

\begin{prop} \label{P15}
We have $\nu_{s}(M)\leq1-e(F)$. In particular, if $e(F)=1$, then $\nu$ is
absolutely continuous wrt volume.
\end{prop}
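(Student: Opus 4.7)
The plan is to make precise the intuition that any singular part of $\nu$ must arise from concentration of $\phi_n^2$ on sets of small $d\mathrm{vol}_M$-volume. To that end, for each truncation level $K > 0$, I decompose $\phi_n^2 = \min(\phi_n^2, K) + (\phi_n^2 - K)_+$ and correspondingly split
$$ \nu_n = \alpha_n^K + \beta_n^K, \qquad \alpha_n^K := \min(\phi_n^2, K)\, d\mathrm{vol}_M, \quad \beta_n^K := (\phi_n^2 - K)_+\, d\mathrm{vol}_M. $$
The piece $\alpha_n^K$ has density bounded by $K$, while the tail $\beta_n^K$ should absorb whatever singular mass appears in the limit. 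Its total mass is controlled by the pushforward identity \eqref{A}: since $\phi_n^2$ has distribution $\rho_n$ under $d\mathrm{vol}_M$,
$$ \beta_n^K(M) = \int_K^\infty (x-K)\, d\rho_n(x) \leq \int_K^\infty x\, d\rho_n(x) = \tau_n([K, \infty]). $$

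Next, for each fixed $K$ I invoke weak-$*$ compactness of finite positive measures on the compact manifold $M$ to pass to a subsequence along which both $\alpha_n^K \to \alpha^K$ and $\beta_n^K \to \beta^K$. Summing gives $\nu = \alpha^K + \beta^K$, and the limit $\alpha^K$ inherits absolute continuity with density bounded by $K$: for every nonnegative continuous $f$,
$$ \int_M f\, d\alpha^K = \lim_n \int_M f\, d\alpha_n^K \leq K \int_M f\, d\mathrm{vol}_M. $$
Uniqueness of the Lebesgue decomposition applied to $\nu = \alpha^K + \beta^K_c + \beta^K_s$ then forces $\nu_s = \beta^K_s \leq \beta^K$ as positive measures. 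Because $M$ is compact, testing against the constant function $1$ shows that weak-$*$ convergence preserves total mass for positive measures, so
$$ \nu_s(M) \leq \beta^K(M) = \lim_n \beta_n^K(M) \leq \limsup_n \tau_n([K, \infty]). $$

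Finally, I let $K \to \infty$. Choosing $K$ outside the (at most countable) set of atoms of $\tau$ and using the weak convergence $\tau_n \to \tau$ on the compactification $[0, \infty]$, the $\limsup$ coincides with $\tau([K, \infty])$. By monotonicity and \eqref{B}, $\tau([K, \infty]) \to \tau(\{\infty\}) = 1 - e(F)$ as $K \to \infty$, which yields $\nu_s(M) \leq 1 - e(F)$. The "in particular" assertion is then immediate: if $e(F) = 1$ the nonnegative measure $\nu_s$ vanishes, so $\nu \ll d\mathrm{vol}_M$. The main subtlety is the bookkeeping on total masses under weak-$*$ limits — the fact that $\beta^K(M)$ equals $\lim_n \beta_n^K(M)$ and not merely a one-sided bound is what keeps the truncation argument tight, and this relies crucially on the compactness of $M$.
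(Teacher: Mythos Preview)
Your proof is correct and takes a genuinely different route from the paper's. The paper argues geometrically: it fixes a small-volume open neighborhood $O$ of the support of $\nu_s$, uses the Portmanteau bound $\liminf_n \nu_n(O) \geq \nu_s(M)$, and then observes that on a set of volume $<\varepsilon$ the $\nu_n$-mass must mostly come from points where $\phi_n^2$ is large (at least $\nu_n(O)/\sqrt{\varepsilon}$), which pushes that mass into the tail of $\tau_n$. Your argument instead performs a global truncation of the density, splitting $\nu_n$ into a part with bounded Radon--Nikodym derivative and a tail; the bounded part can only contribute to $\nu_c$, so $\nu_s$ is trapped in the weak limit of the tails, whose total mass is controlled by $\tau_n([K,\infty])$. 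Your approach is essentially a uniform-integrability argument and is arguably cleaner and more transportable; the paper's approach is more hands-on and avoids invoking subsequential compactness of measures and uniqueness of the Lebesgue decomposition. One minor remark on your writeup: since you pass to a subsequence, the equality $\beta^K(M)=\lim_n \beta_n^K(M)$ is along that subsequence, so the honest conclusion is $\nu_s(M)\leq \liminf_n \beta_n^K(M)\leq \liminf_n \tau_n([K,\infty])$, which is of course enough (and in fact matches the $\liminf$ appearing in the paper's identity \eqref{B}).
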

\begin{proof} Let $S$ be the support of the singular part $\nu_{s}$. By definition of the continuous-singular decomposition, the measures
$\nu_s$ and $d\vol_M$ are singular, i.e. there exist two disjoint Borel subsets $A_1$ and $A_2$ in $M$ whose union is $M$ and such that $\nu_s$ is zero on all measurable subsets of $A_2$ while $d\vol_M$ is zero on all mesurable subsets of $A_1$. In particular the Lebesgue measure of $S$ is zero and, given any positive $\varepsilon$, there exists $r>0$ such that the open
$r$-neighborhood $O$ around $S$ has volume less than $\varepsilon$. Weak
convergence of $\nu_{n}$ implies
\begin{equation} \label{C}
\liminf\nu_{n}(O)\geq\nu(O)\geq\nu_{s}(S)=\nu_{s}(M)\text{.} 
\end{equation}

For a fixed $n$, let $K_n=\nu_{n}(O)$ and let
\[
B=(\phi_{n}^{2})^{-1}([\frac{K_n}{\sqrt{\varepsilon}},\infty])=\left\{  x\in O\mid
\phi_n^{2}(x)\geq\frac{K_n}{\sqrt{\varepsilon}}\right\}  \text{.}%
\]
Then
\[
\nu_{n}(O\setminus B)=\int\limits_{O\setminus B}\phi_{n}^{2}(x)d\mathrm{vol}%
\leq\mathrm{vol}(O\setminus B)\frac{K_n}{\sqrt{\varepsilon}}\leq K_n \sqrt
{\varepsilon}%
\]
which implies
\[
\nu_{n}(B)\geq K_n (1-\sqrt{\varepsilon})\text{. }%
\]
Summarizing, for all $n>0$ we have
\[
\tau_{n}([\frac{\nu_{n}(O)}{\sqrt{\varepsilon}},\infty])\geq\nu_{n}%
(O)(1-\sqrt{\varepsilon})\text{.}%
\]
In particular, using \eqref{C}, we have
\[
\lim\inf_{n\rightarrow\infty}\tau_{n}([\frac{\nu_{n}(O)}{\sqrt{\varepsilon}%
},\infty])\geq\lim\inf_{n\rightarrow\infty}\nu_{n}(O)(1-\sqrt{\varepsilon
})\geq(1-\sqrt{\varepsilon})\nu_{s}(M)\text{.}%
\]
Choosing $\varepsilon>0$ to be arbitrarily small and using \eqref{B}, this proves
the Proposition. 
\end{proof}

\begin{proof}[Proof of Theorem \ref{weakQUE} from the Introduction]
Up to passing to a subsequence we may suppose that the sequence $d\nu_n := \phi_n^2 \, d\vol_M$ is weakly convergent; let $\nu$ be its limit.
To prove that $\nu$ coincides with the volume measure $\vol_M$, it is therefore enough to prove that for any continuity set $B \subset M$  (i.e. such that $\vol_{M} (\partial B) = 0$) of positive measure, we have:
\begin{equation} \label{w*conv}
\lim_{n \to \infty} \nu_n (B) = \vol_M (B).
\end{equation}

Let us fix a continuity set $B \subset M$ with $\vol_M (B) >0$ and prove \eqref{w*conv}. Let $\mu_{M_{n} , \phi_n }^B$ be the probability measure on $\mathcal{E}^d$ obtained by pushing forward the probability measure 
$$U \mapsto \frac{\vol_{M_{n}} (U \cap B)}{\vol_{M_{n}} (B )} =  \frac{\vol_M (U \cap B)}{\vol_M (B )}$$
on $M_n$, under the map \eqref{map:pf}. In other words $\mu_{M_{n} , \phi_n }^B$ is obtained by sampling only into $B$. 

\begin{lem} \label{L:16a}
The set $\{ \mu_{M_{n} , \phi_n }^{B} \}$ of probability measures on $\mathcal{E}^d$ is relatively compact with respect to the topology of weak convergence. 
\end{lem}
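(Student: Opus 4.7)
My plan is to verify the tightness hypothesis of Prohorov's theorem. A subset $K \subset \mathcal{E}^d$ is precompact in the smooth topology if and only if for every radius $R > 0$ and every integer $k \geq 0$ one has uniform bounds, on the ball of radius $R$ around the base point, for the $C^k$-norm of the curvature tensor, for the reciprocal of the injectivity radius, and for the $C^k$-norm of the decorating function (this is the natural extension of the Cheeger--Gromov compactness criterion to the decorated setting). It will suffice to produce, for every $\varepsilon > 0$, such a set $K$ with $\mu_{M_{\mu_n}, \phi_n}^B(K) \geq 1-\varepsilon$ uniformly in $n$.

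First I would control the intrinsic geometry of $M_{\mu_n}$. Rescaling by $\mu_n$ divides sectional curvatures by $\mu_n^2$, so the curvatures of $M_{\mu_n}$ and all their covariant derivatives tend uniformly to zero. The BS-convergence of $M_{\mu_n}$ to $\R^d$ implies, for each fixed $R > 0$, that the $\vol_M$-measure of the set of $p \in M$ whose injectivity radius in $M_{\mu_n}$ at $p$ is less than $R$ tends to $0$. A diagonal argument over a countable sequence of radii $R_j \to \infty$ then produces subsets $G_n \subset B$ of relative measure $\geq 1 - \varepsilon/2$ on which the pointed $R_j$-balls $(B_{M_{\mu_n}}(p, R_j), p)$ lie in a prescribed Cheeger--Gromov-precompact family.

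Next I would control the functions. Since $\Delta \phi_n = \phi_n$ on $M_{\mu_n}$, standard interior elliptic estimates on balls with controlled geometry yield
$$\|\phi_n\|_{C^k(B_{M_{\mu_n}}(p, R/2))} \leq C(k, R)\, \|\phi_n\|_{L^2(B_{M_{\mu_n}}(p, R))}.$$
Rewriting the right-hand side in the original metric it equals $\mu_n^d \int_{B_M(p, R/\mu_n)} \phi_n^2 \, d\vol_M$; integrating in $p$ against $d\vol_M$, Fubini and the bound $\vol_M(B_M(q, r)) \leq C_M r^d$ for small $r$ show the average is $\leq C_M R^d \int_M \phi_n^2 \, d\vol_M = C_M R^d$. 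Markov's inequality, together with the positivity of $\vol_M(B)$, then confines the set of $p \in B$ on which this local $L^2$-mass exceeds a large threshold $T$ to relative measure at most $C_M R^d / (T \vol_M(B))$, which can be forced below $\varepsilon/4$ uniformly in $n$; a further diagonal argument over $R$ and $k$ handles all derivative orders simultaneously.

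Intersecting the good sets from the previous two paragraphs yields a subset of $B$ of relative measure at least $1-\varepsilon$ on which all the triples $[M_{\mu_n}, p, \phi_n]$ belong to a prescribed precompact subset of $\mathcal{E}^d$, proving tightness. I expect the main obstacle to be the passage, in the third paragraph, from the global $L^2$-normalization of $\phi_n$ on $M$ to local $L^2$-bounds on balls in the rescaled metric: this must proceed via the Markov/Fubini averaging sketched above rather than by any pointwise control of $\phi_n$, and the eventual constants depend on the small-ball geometry of the fixed manifold $M$. Once that step is granted, Cheeger--Gromov compactness and interior elliptic regularity assemble the proof in a routine manner.
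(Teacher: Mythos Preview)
Your argument is essentially correct, but it is far more elaborate than necessary. The paper's proof is two lines: since $\mu^{B}_{M_{\mu_n},\phi_n}$ is obtained from $\mu_{M_{\mu_n},\phi_n}$ by restricting the sampling to $B$ and renormalising, one has the domination
\[
\mu_{M_{\mu_n},\phi_n}(A)\ \geq\ \vol_M(B)\,\mu^{B}_{M_{\mu_n},\phi_n}(A)
\]
for every measurable $A\subset\mathcal{E}^d$; and the sequence $(\mu_{M_{\mu_n},\phi_n})$ is already assumed to converge weakly (we are inside the proof of Theorem~\ref{weakQUE}, which assumes Conjecture~\ref{BerryConj}), hence is tight. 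Tightness of the dominated family follows immediately, and Prokhorov concludes.

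Your route rebuilds tightness from scratch via Cheeger--Gromov precompactness and interior elliptic regularity, together with a Fubini/Markov argument to transfer the global $L^2$-normalisation into local $L^2$-bounds on most balls. This is a legitimate strategy and has the advantage of not relying on the standing assumption that $(M_{\mu_n},\phi_n)$ already BS-converges; in particular your argument would prove relative compactness of $\{\mu_{M_{\mu_n},\phi_n}\}$ itself, unconditionally. But in the present context that assumption is available, and the simple domination inequality above makes the Cheeger--Gromov and elliptic machinery unnecessary.
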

\begin{proof} Note that if $A$ is a measurable subset of $\mathcal{E}^d$ we have:
$$\mu_{M_{n} , \phi_n } (A) \geq c \mu_{M_{n} , \phi_n }^{B} (A),$$
where $c$ is a positive constant independent of $n$ --- one can take $c = \vol_M (B)$. Since by assumption the sequence of probability measures $(\mu_{M_{n} , \phi_n })$ weakly converges to a probability measure on $\mathcal{E}^d$, the collection of measures $\{  \mu_{M_{n} , \phi_n }^{B} \}$ has to be tight, and the lemma follows from Prokhorov's theorem.
\end{proof}

\begin{lem} \label{L:16b}
Any  weak limit of a converging subsequence of $(\mu_{M_{n} , \phi_n }^{B} )$ is supported in 
$$\{ [\R^d , p , \psi ] \in \mathcal{E}^d \; : \; p \in \R^d , \ \Delta \psi = \psi \}$$ 
and is translation-invariant.
\end{lem}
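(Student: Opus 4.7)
Both assertions are limit statements along a convergent subsequence of $\mu_n := \mu_{M_{\mu_n}, \phi_n}^{B}$; the plan is to verify them in turn using only the smooth topology on $\mathcal{E}^d$ and the fact that $\mu_n \to \infty$.

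For the support statement, the point is that rescaling by $\mu_n$ zooms in on $M$. Fix $R>0$; for every base point $p \in M$, the exponential map in the rescaled metric $\mu_n^2 g$ identifies the ball $B_{M_{\mu_n}}(p, R)$ with $B_{\R^d}(0, R)$ up to a perturbation of the pulled-back metric that is $C^\infty$-small (of order $1/\mu_n^2$ in terms of the curvature tensor of $M$). Hence any pointed manifold in the support of a weak limit of $\mu_n$ has $\R^d$ as its underlying manifold. Since in the rescaled metric $\Delta_{M_{\mu_n}} = \mu_n^{-2} \Delta_M$, the relation $\Delta_M \phi_n = \mu_n^2 \phi_n$ becomes the linear elliptic equation $\Delta_{M_{\mu_n}} \phi_n = \phi_n$, which passes to the $C^\infty$-limit and yields $\Delta \psi = \psi$ on $\R^d$.

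For translation invariance, fix $v \in \R^d$ and lift all measures to the canonical $O(d)$-cover of $\mathcal{E}^d$ (i.e. framed pointed manifolds with a smooth function), so that translation $T_v$ becomes an honest self-map of the Euclidean stratum. For $n$ large, $T_v [M_{\mu_n}, p, \phi_n]$ is $C^\infty$-close to $[M_{\mu_n}, \exp_p(v/\mu_n), \phi_n]$ (with the frame transported along), because the exponential of $\mu_n^2 g$ at a vector of rescaled norm $|v|$ is close to the Euclidean translation by $v$. Let $B^{(n)}_v \subset M$ denote the image of $B$ under $p \mapsto \exp_p(v/\mu_n)$. Since the displacement has length $|v|/\mu_n$ in the unscaled metric,
\[
\vol_M\!\left( B \triangle B^{(n)}_v \right) = O(|v|/\mu_n) \longrightarrow 0 .
\]
Combining these two inputs, for any bounded continuous $F$ on (the $O(d)$-cover of) $\mathcal{E}^d$ supported at a fixed test radius,
\[
\int F \circ T_v \, d\mu_n \;=\; \int F \, d\mu_{M_{\mu_n}, \phi_n}^{B^{(n)}_v} + o(1) \;=\; \int F \, d\mu_n + o(1),
\]
the first equality using the smoothness comparison, the second the volume estimate and boundedness of $F$. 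Passing to the limit yields the $T_v$-invariance, and since $v$ is arbitrary, the full translation invariance.

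The main obstacle I anticipate is bookkeeping rather than substance: one must lift carefully to the frame bundle so that ``translation by $v$'' is well defined, and then quantify how closely the exponential map in the rescaled metric approximates the Euclidean translation by $v$ on a fixed test ball. Neither step is deep, but both are needed so that the change-of-variables from $B$ to $B^{(n)}_v$ and the error bound $o(1)$ above can be made rigorous.
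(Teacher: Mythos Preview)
Your argument is correct and is essentially the paper's proof. The paper lifts to the unit tangent bundle $T^1\mathcal{E}^d$ and uses the geodesic flow of length $T$ in the rescaled metric as the ``translation,'' whereas you lift to the frame bundle and use translation by an arbitrary $v$; in both cases the key input is the same F{\o}lner--type estimate, namely that moving the base point by a distance $|v|/\mu_n$ in the unscaled metric changes the sampling set $B$ by a set of vanishing volume. One small overclaim: the rate $\vol_M(B\triangle B^{(n)}_v)=O(|v|/\mu_n)$ requires some boundary regularity of $B$ (e.g.\ finite Minkowski content), which is not guaranteed for a general continuity set; only $o(1)$ is needed and that follows from $\vol_M(\partial B)=0$, exactly as in the paper's estimate $\vol_{T^1M}(N^0_{R/\mu_n}(\tilde B))/\vol_{T^1M}(\tilde B)\to 1$.
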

\begin{proof} The first part of the lemma follows from the construction since, denoting by $\Delta_n$ the Laplace operator on $M_n$, we have $\Delta_n \phi_n = \phi_n$. It therefore remains to prove that any weak limit is translation-invariant. 

Recall that each measure $\mu_{M_{n} , \phi_n }$ has a natural lift $\tilde{\mu}_{M_{n} , \phi_n }$ on the space $T^1 \mathcal{E}^d$ of isometry classes of rooted unit tangent bundles colored by a function. Since by assumption Conjecture \ref{BerryConj} holds, the sequence $(\tilde{\mu}_{M_{n} , \phi_n })$ weakly converges toward the natural lift of the measure associated with $(\R^d , F_{{\rm unif}, 1})$ that is supported in 
$$\{ [\R^d , p , \psi ] \in \mathcal{E}^d \; : \; p \in \R^d , \ \Delta \psi = \psi \}.$$ 
The flow \eqref{geodflow} naturally extends to a flow 
$$\mathbf{g}_t : T^1 \mathcal{E}^d \to T^1 \mathcal{E}^d$$
and the limit measure associated with $(\R^d , F_{{\rm unif}, 1})$ is invariant under the flow $\mathbf{g}_t$ (by homogeneity of $\R^d$).

Now consider the lifts $\tilde{\mu}^B_{M_{n} , \phi_n }$ of the measures $\mu_{M_{n} , \phi_n }^{B}$. Passing to a subsequence, we may suppose that the sequence $(\mu_{M_{n} , \phi_n }^{B} )$ weakly converges. Given a subset 
$$\mathcal{U} = ( \Omega , U_{[N , p , \phi]} ) \subset T^1 \mathcal{E}^d,$$
where $\Omega \subset \mathcal{E}^d$ is an open subset and $U_{[N , p , \phi ]}$ is an open subset of the unit sphere $T_p^1 N$, the measure  $\tilde{\mu}^B_{M_{n} , \phi_n }$ is defined by 
\begin{equation*}
\begin{split}
\tilde{\mu}^B_{M_{n} , \phi_n } (\mathcal{U} ) & = \int_\Omega \int_{U_{[N , p , f]}} dS_{T_p^1N}^{d-1} d \mu_{M_{n} , \phi_n }^{B} ([N , p , \phi]) \\
& = \int_{\omega \cap B} \int_{U_p} d S_{T_p^1 M_n}^{d-1} \frac{d \vol_{M_n} (p)}{\vol_{M_n} (B)} \\
& = \frac{\omega_{T^1 M_n} (U \cap \tilde{B})}{\omega_{T^1 M_n} (\tilde{B})}.
\end{split}
\end{equation*}
Here $dS_{T_p^1 N}^{d-1}$ is the usual Lebesgue measure on the unit sphere $T_p^1 N$, the open subset 
$$U = (\omega , U_p)  \subset T^1M_n$$ 
is the preimage of $\mathcal{U}$ by the map
$$T^1 M_n \to T^1\mathcal{E}^d; \quad (p,v) \mapsto [M_n , p , v,  \phi_n ],$$
we denote by $\tilde{B}$ the preimage of $B$ in $T^1 M_{n}$ and write $\omega_{T^1 M_n}$ for the Liouville measure of $M_n$. 

We shall now prove that
\begin{equation}\label{e:transinv}
\frac{\omega_{T^1M_{n}} (\mathbf{g}_{T}^{-1}( U) \cap \tilde{B}) - \omega_{T^1 M_{n}} (U \cap \tilde{B})}{\omega_{T^1 M_{n}} ( \tilde{B} )} \to 0
\end{equation}
as $n\to +\infty$. It follows that the lifted measure $\tilde{\mu}^B_{M_{n} , \phi_n }$ and $(\mathbf{g}_T)_* \tilde{\mu}^B_{M_{n} , \phi_n }$ are asymptotically equal and both converge to the lift to $T^1 \mathcal{E}^d$ of the weak limit of $(\mu_{M_{n} , \phi_n }^{B} )$. Pushing forward the measures $\tilde{\mu}^B_{M_{n} , \phi_n }$ and $(\mathbf{g}_T)_* \tilde{\mu}^B_{M_{n} , \phi_n }$ on $T^1 \mathcal{E}^d$ to measures on $\mathcal{E}^d$ we get Lemma \ref{L:16b}.

It remains to prove \eqref{e:transinv}. For any fixed positive real number $R$, denote by $N_R^{n}(\tilde{B})$ the $R-$neighborhood of $\tilde{B}$ on $T^1M_{n}$.
Notice first that since $M_n$ is obtained from $M=M_0$ by rescaling the metric by $\alpha_n$, and since we assumed $\vol_{M} (\partial B) = 0$, we have:
\begin{equation}\label{e:folner}
\frac{\omega_{T^1M_{n}} (N_R^n (\tilde{B}))}{\omega_{T^1M_{n}}(\tilde{B})} = \frac{\omega_{T^1M} (N_{R/\alpha_n}^0 (\tilde{B}))}{\omega_{T^1M}(\tilde{B})} \to 1
\end{equation}
as $n$ tends to infinity. Using the invariance of the Liouville measure under $\mathbf{g}_T$, we can then rewrite
\begin{align*}
&\omega_{T^1M_{n}} (\mathbf{g}_T^{-1} ( U) \cap  \tilde{B}) - \omega_{T^1M_{n}} (U \cap \tilde{B})  \\
&\qquad = \omega_{T^1M_{n}} ( U \cap \mathbf{g}_T \tilde{B}) - \omega_{T^1M_{n}} (U \cap \tilde{B})\\
&\qquad = \omega_{T^1M_{n}}(U \cap (\mathbf{g}_{T} \tilde{B} \setminus \tilde{B})) - \omega_{T^1M_{n}}(U \cap (\tilde{B}\setminus \mathbf{g}_{T} \tilde{B})).
\end{align*}
Since 
$$\mathbf{g}_{T} \tilde{B} \subset N_{T}^n(\tilde{B}) \quad \mbox{and} \quad \tilde{B} \subset N_T^n (\mathbf{g}_{T} \tilde{B}),$$
we have:
$$\frac{\omega_{T^1M_{n}}(U \cap (\mathbf{g}_{T} \tilde{B} \setminus \tilde{B}))}{\omega_{T^1M_{n}}(\tilde{B})}\leq \frac{ \omega_{T^1M_{n}}(N_{T}^n(\tilde{B}) \setminus \tilde{B})}{\omega_{T^1M_{n}}(\tilde{B})}$$
and 
\begin{equation*}
\begin{split}
\frac{\omega_{T^1M_{n}}(U \cap (\tilde{B}\setminus \mathbf{g}_{T} \tilde{B}))}{\omega_{T^1M_{n}}(\tilde{B})} & \leq \frac{ \omega_{T^1M_{n}}( N_T^n (\mathbf{g}_{T} \tilde{B}) \setminus \mathbf{g}_{T} \tilde{B}))}{\omega_{T^1M_{n}}(\tilde{B})} \\
& \leq \frac{\omega_{T^1M_{n}}( N_T^n (\mathbf{g}_{T} \tilde{B}) \setminus \mathbf{g}_{T} \tilde{B}))}{\omega_{T^1M_{n}}(\mathbf{g}_T \tilde{B})} 
\end{split}
\end{equation*}
and it follows from \eqref{e:folner} that they both tend to $0$ as $n\to+\infty$. This proves \eqref{e:transinv} and concludes the proof of Lemma \ref{L:16b}.

\end{proof}

Using the obvious fact that:
\begin{equation} \label{E:convcomb}
\mu_{M_{n} , \phi_n } = \vol_M (B) \mu_{M_{n} , \phi_n }^{B} + \vol_M ({}^c B) \mu_{M_{n} , \phi_n }^{{}^c B},
\end{equation}
where ${}^c B$ is the complement of $B$ in $M$, we finally prove:

\begin{lem} \label{L:16}
The sequence $(\mu_{M_{n} , \phi_n }^{B} )_n$ weakly converges toward the isotropic monochromatic Gaussian random Euclidean wave with eigenvalue $1$.
\end{lem}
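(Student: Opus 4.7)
The plan is to combine the compactness statement (Lemma \ref{L:16a}), the invariance/eigenfunction statement (Lemma \ref{L:16b}), the convex decomposition \eqref{E:convcomb}, and the ergodicity of the Gaussian wave under translations (the Euclidean analogue of Lemma \ref{L:Erg}). Because the monochromatic Gaussian random Euclidean wave is extremal in the simplex of translation-invariant probability measures on $\mathcal{E}^d$ supported on $\{\Delta\psi=\psi\}$, any convex decomposition into translation-invariant probability measures forces all components to coincide with it.

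First, I would extract a convergent subsequence. By Lemma \ref{L:16a} applied both to $B$ and to its complement ${}^c B$, the sequences $(\mu^B_{M_{\mu_n},\phi_n})$ and $(\mu^{{}^c B}_{M_{\mu_n},\phi_n})$ are tight on $\mathcal{E}^d$. By a diagonal extraction I can pass to a subsequence along which both converge, say to probability measures $\nu^B$ and $\nu^{{}^c B}$. By Lemma \ref{L:16b}, each of $\nu^B$ and $\nu^{{}^c B}$ is translation-invariant and supported on the locus $\{[\R^d,p,\psi]:\Delta\psi=\psi\}$.

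Next, pass to the limit in \eqref{E:convcomb}. Under Conjecture \ref{BerryConj}, the left-hand side converges to $F_{{\rm unif},1}$, so we obtain the identity
\begin{equation*}
F_{{\rm unif},1} \;=\; \mathrm{vol}_M(B)\,\nu^B \;+\; \mathrm{vol}_M({}^c B)\,\nu^{{}^c B}
\end{equation*}
of translation-invariant probability measures on $\mathcal{E}^d$. Since $\mathrm{vol}_M(B)>0$ and $\mathrm{vol}_M({}^c B)\ge 0$, this is a genuine convex combination. The isotropic monochromatic Gaussian random Euclidean wave is ergodic under $\R^d$-translations (the argument of Lemma \ref{L:Erg} adapts verbatim: the map $f\mapsto X_f$ embeds an irreducible unitary representation of $\R^d$ densely into $L^2$ of the Gaussian measure, so the translation action is ergodic). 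Ergodicity is equivalent to extremality in the convex set of translation-invariant probability measures, hence $\nu^B = F_{{\rm unif},1}$ (and likewise $\nu^{{}^c B}=F_{{\rm unif},1}$).

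Finally, since every subsequential limit of $(\mu^B_{M_{\mu_n},\phi_n})$ equals $F_{{\rm unif},1}$ and the sequence is tight, the whole sequence converges weakly to $F_{{\rm unif},1}$. The only subtle point is the ergodicity/extremality step; it could also be obtained directly by computing the covariance of $\nu^B$ and comparing with $\varphi_1$, but invoking ergodicity is cleaner and mirrors the strategy used later for the process $\alpha$ in Section \ref{s:randomization}.
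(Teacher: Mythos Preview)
Your proof is correct and follows essentially the same route as the paper: extract subsequential limits for both $B$ and ${}^c B$ via Lemma~\ref{L:16a}, use Lemma~\ref{L:16b} to get translation invariance, pass to the limit in \eqref{E:convcomb}, and invoke ergodicity of $F_{{\rm unif},1}$ to force both components of the convex combination to equal the Gaussian wave. Your explicit justification of the ergodicity step (via the Euclidean analogue of Lemma~\ref{L:Erg}) is a slight elaboration on the paper, which simply asserts it.
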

\begin{proof} By Lemma \ref{L:16a}, it is enough to prove that $F_{{\rm unit}, 1}$ is the only possible weak limit of $(\mu_{M_{n} , \phi_n }^{B} )_n$. So let $\mu_1$ be a weak limit of a converging subsequence $(\mu_{M_{n_j} , \phi_{n_j} }^{B})$. 

Working similarly with the complement ${}^c B$ of $B$ in $M$, we may, and will, suppose that both $\mu_{M_{n_j} , \phi_{n_j} }^B$ and $\mu_{M_{n_j} , \phi_{n_j} }^{{}^c B}$ weakly converge. Denote by $\mu_1$ and $\mu_2$ their respective limits. By Lemma \ref{L:16b} these measures are both translation-invariant, and it follows from \eqref{E:convcomb} that --- as the weak limit of $\mu_{M_{n} , \phi_n }$ --- the process $F_{{\rm unif}, 1}$ can be decomposed as a convex sum
$$\vol_M (B) \mu_1 + \vol_M ({}^c B) \mu_2$$
of two translation-invariant measures.  But being ergodic the process $F_{{\rm unif}, 1}$ cannot be decomposed as a non-trivial convex sum of translation-invariant measures. It then follows that both $\mu_1$ and $\mu_2$ are equal to $F_{{\rm unif}, 1}$. 
\end{proof}

To conclude the proof of Theorem \ref{weakQUE}, note that --- as in the proof of Proposition \ref{P15} --- we have:
$$\nu_n (B) = \vol_M (B) \mathbb{E} (X_n^B ) = \vol_M (B) \int x d\rho_n^B (x)$$
where $X_n^B$ is the value of $\phi_n^2$ at a random point of $B$ with respect to the probability measure $d\vol_M / \vol_M (B)$, and we denote by $\rho_n^B$ the distribution of $X_n^B$; it is equal to the probability measure on $\R^+$ obtained by pushing forward $\mu_{M_{n} , \phi_n }^B$ by the continuous map 
$$\mathcal{E}^d \to \R^+; \ [M,p,\psi] \mapsto \psi (p)^2.$$
Now, by Lemma \ref{L:16}, the sequence $(\mu_{M_{n} , \phi_n }^B)$ weakly converges toward the isotropic monochromatic Gaussian random Euclidean wave $F=F_{{\rm unif},1}$. It follows that the sequence $(\rho_n^B )$ weakly converges toward the distribution
$\rho$ of $F(0)^2$, and since the energy $e(F)$ is equal to $1$, the sequence of measures $\tau_n^B$ defined by 
$$d\tau_n^B (x) = x d\rho_n^B (x) \quad ( x \in \R^+),$$
weakly converges toward the probability measure $\tau$ on $\R^+$ defined by 
$$d\tau (x) = x d\rho (x) \quad (x \in \R^+).$$
We conclude that 
$$\int x d\rho_n^B (x) \to 1$$
and therefore 
$$\nu_n (B) \to \vol_M (B)$$
as $n$ tends to infinity.
\end{proof}

\begin{unremark} The proof of Theorem \ref{weakQUE} only uses that the limiting wave $F$ is ergodic and has energy $e(F) =1$.
\end{unremark}

\subsection{Level aspect} \label{S:level} As explained in the Introduction the BS formulation of Berry's conjecture immediately suggest a similar conjecture but regarding the level aspect --- Conjecture \ref{BC2} of the Introduction. Keeping  notations as in the preceding paragraphs, we first revisit in more details this conjecture before raising more questions. 

Recall that to any uniform, torsion free, discrete subgroup of $G$ and to any $\Gamma$-invariant function $\phi \in \mathcal{E}_s$ of normalized $L^2$-norm $1$ on $\Gamma \backslash G$ we have associated a $G$-invariant probability measure $\mu_{\phi}$ on $\mathcal{E}_s$. 

A family of lattices in $G$ is \emph{uniformly discrete} if there is an identity neighborhood in $G$ that intersects trivially all of their conjugates. For torsion
free lattices this is equivalent to saying that there is a uniform lower bound for the injectivity radius of the corresponding $X$-manifolds.
 
Let $\Gamma_n $ be a uniformly discrete sequence of lattices in $G$ that BS-converges toward the trivial IRS. Then there exists a sequence $R_n \to \infty $ such that 
$$\alpha_n = \frac{\mathrm{vol} (\Gamma_n \backslash G )_{<R_n}}{\mathrm{vol} (\Gamma_n \backslash G )} \to 0.$$
Note that necessarily $R_n = O(\log \vol(\Gamma_n \backslash G))$, otherwise $(\Gamma_n \backslash G )_{<R_n} = \Gamma_n \backslash G$.
\begin{unremark} It follows from \cite[\S 5]{7samurai} that, for congruence groups, we can take $R_n = c \log \mathrm{vol} (\Gamma_n \backslash G )$ so that $\alpha_n \leq \mathrm{vol} (\Gamma_n \backslash G )^{\beta}$ with $\beta$ positive. 
\end{unremark}

For later purposes let us fix $r_n \leq c' R_n$ with $0<c'<1$ a sequence that tends to infinity with 
\begin{equation} \label{limBS}
r_n \mathrm{vol} (B_G (e , r_n) ) \alpha_n \to 0.
\end{equation}

Let $s_0 \in \mathcal{C}$ (recall the notation in Section \ref{s:notation}) and let $\delta$ be a positive real number such that the $\delta$-neighborhood\footnote{Here we equip $\mathfrak{a}^*$ with the metric induced by the Killing form.} $I_\delta (s_0)$ of $s_0$ in $i \mathfrak{a}^*$ is contained in $\Lambda^+$. We denote by $N(\delta , \Gamma_n)$ the dimension of the subspace $\mathcal{H}_{\delta}^{(n)} \subset L^2 (\Gamma_n \backslash X)$ of smooth $\Gamma_n$-invariant functions on $X$ that is spanned by a maximal orthogonal family of functions $\phi \in L^2 (\Gamma_n \backslash X)$ that satisfy
$$\int_{\Gamma_n \backslash G} |\phi |^2 =1 \mbox{ and } \Delta \phi = (\|\rho\|^2 + \|s\|^2) \phi $$
with $s$ in $I_\delta (s_0)$. Note that if we write  $\lambda_0 = \|\rho\|^2 + \|s_0\|^2$, we have
$$N(\delta, \Gamma_n) = \# \{ i \; : \; \lambda_i^{(n)} \in [\lambda_0 -\delta' , \lambda_0 + \delta'] \}, $$
for some $\delta'$ depending on $\delta$ and $s_0$, where $(\lambda_i^{(n)})_{i\in\N}$ is the sequence of eigenvalues of the Laplacian on $\Gamma_n \backslash X$ with each eigenvalue appearing a number of time equal to its multiplicity.

\medskip
\noindent
{\it Example.} For $G = \SL_2 (\R)$, recall that we can identify $\ag$ with $\R$ and $\mathcal{C}$ with $i \R_{>0}$. Using this identification $\rho \in \ag^*$ is equal to $\frac12 \in \R$.  In this case, $I_\delta(s_0)$ is an interval in $[0,+\infty)$ corresponding to an interval $[\lambda_0 -\delta' , \lambda_0 + \delta'] \subset [\frac14, +\infty)$ in the spectrum, with $\lambda_0 = \frac14 + |s_0|^2$.

\medskip 

\begin{lem} \label{L:deltan}
There exists a sequence $(\delta_n)$ of  positive real numbers that converges to $0$ and satisfies the two following properties.
\begin{enumerate}
\item As $n$ tends to infinity, we have 
$$\frac{N(\delta_n ,\Gamma_n)}{\mathrm{vol} (\Gamma_n \backslash G)} \sim \mu_{\rm Planch} (I_{\delta_n} (s_0 ) ) .$$
\item For all $n \geq 1$, we have $\delta_n \geq r_n^{-1}$.
\end{enumerate}
\end{lem}
\begin{proof} By hypothesis $s_0$ correspond to an eigenvalue in the interior of the $L^2$-spectrum of $X$. Now  \cite[Theorem 1.2]{7samurai} implies that the spectral measure of $\Gamma_n \backslash X$ weakly converges toward the spectral measure of $X$. For all positive $\delta$ such that 
$I_\delta (s_0)$ is contained in the tempered spectrum, we have
$$\frac{N(\delta,\Gamma_n)}{\mathrm{vol} (\Gamma_n \backslash G)} \to  \mu_{\rm Planch} (I_\delta (s_0 ) ) $$
as $n$ tends to infinity. It follows the sequence 
$$\delta_n = \mathrm{inf} \left\{ \delta \in [r_n^{-1} , +\infty) \; : \; (1-\delta ) \mu_{\rm Planch} (I_\delta (s_0 ) ) \leq \frac{N(\delta,\Gamma_n)}{\mathrm{vol} (\Gamma_n \backslash G)} \leq (1+\delta ) \mu_{\rm Planch} (I_\delta (s_0 ) ) \right\},$$
satisfies the desired properties.  
\end{proof}

For each $n$, fix an orthonormal basis $(\phi_j^{(n)})$ of $\mathcal{H}_{\delta_n}^{(n)}$. 
The following provocative conjecture is equivalent to Conjecture 3 of the Introduction. 

\begin{conjecture} \label{C1}
Suppose that the representation of $G$ in $\overline{\bigoplus_n L_0^2 (\Gamma_n \backslash G)}$ has a spectral gap. Then  
any weak* limit of a subsequence of $(\mu_{\Gamma_n \backslash X, \phi_j^{(n)}})$ is a probability measure on $\mathcal{E}_{s_0}$ and the only possible aperiodic limit is $\mu_{{\rm Gauss}, s_0}$.
\end{conjecture}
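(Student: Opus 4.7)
The plan is to decompose the claim into two pieces: (i) any weak* limit of a subsequence of $(\mu_{\phi_j^{(n)}})$ is a probability measure on $\mathcal{E}_{\lambda_0}$; and (ii) any aperiodic weak* limit must coincide with $\mu_{\text{Gauss}, \lambda_0}$. I would first handle (i) and compute the covariance kernel of any limit; this shows that every limit \emph{looks} Gaussian at the level of second moments. The real content is then to upgrade equality of covariance to equality of distribution, which requires both the aperiodicity hypothesis and the spectral gap.

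For the support statement, each $\phi_j^{(n)}$ satisfies $\Delta \phi_j^{(n)} = \lambda_j^{(n)} \phi_j^{(n)}$ with $\lambda_j^{(n)} \in [\lambda_0 - \delta_n, \lambda_0 + \delta_n]$, and since the topology on $\widehat{\mathrm{Sub}}_G$ includes the $C^\infty$-topology on the decorating function, passing to the weak* limit together with $\delta_n \to 0$ yields support on $\lambda_0$-eigenfunctions. Combined with $G$-invariance, this places the support inside $\mathcal{E}_{\lambda_0}$. For the covariance, $G$-invariance of the limit $\mu$ and the normalization \eqref{energie} imply that the kernel $(g,h) \mapsto \int \psi(gx_0) \overline{\psi(hx_0)} \, d\mu(\psi)$ is a function of $g^{-1}h$, is bi-$K$-invariant, and is a $\lambda_0$-eigenfunction in each variable. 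The space of such spherical eigenfunctions is one-dimensional and normalized at the identity, so the covariance must equal $\varphi_{\lambda_0}(g^{-1}h)$, matching $\mu_{\text{Gauss}, \lambda_0}$ by Lemma \ref{l:covkernel}.

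The heart of the argument, and the main obstacle, is promoting equality of covariance to Gaussianity of all finite-dimensional distributions under aperiodicity. Here I would take inspiration from the Backhausz--Szegedy theorem in the discrete regular-graph case and attempt a central limit type argument for joint evaluations $\psi(g_1 x_0), \ldots, \psi(g_k x_0)$ under $\mu$. The strategy is to express $\psi(x_0)$ as an average of contributions indexed by a large piece of $G$ acting via convolution with spherical kernels, and to exploit the uniform spectral gap in $\overline{\bigoplus_n L^2_0(\Gamma_n \backslash G)}$ to show that these contributions are asymptotically decorrelated. The aperiodicity hypothesis is essential for excluding degenerate limits supported on translates of a single eigenfunction (as arise in tower-of-coverings examples), which would otherwise prevent the CLT from collapsing to a Gaussian. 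Executing this rigorously in the continuous, possibly higher-rank, setting---where the symmetric structure is a full $G$-action rather than a single Hecke operator---appears to require genuinely new technical ideas beyond those developed in this paper, which is precisely why the statement is left as a conjecture.
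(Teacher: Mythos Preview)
The statement you are addressing is labeled a \emph{Conjecture} in the paper, and the paper offers no proof of it. There is therefore nothing to compare your proposal against: the authors explicitly present Conjecture~\ref{C1} as an open problem (equivalent to Conjecture~\ref{BC2} of the Introduction), motivated by analogy with the Backhausz--Szegedy theorem for random regular graphs, and they go on to prove only weakened or averaged versions of it (Theorems~\ref{T2} and~\ref{T1}).

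Your final paragraph correctly diagnoses this: you acknowledge that the central step---promoting equality of covariance to full Gaussianity under the aperiodicity hypothesis---``appears to require genuinely new technical ideas beyond those developed in this paper, which is precisely why the statement is left as a conjecture.'' That is an accurate reading of the situation, but it also means your proposal is not a proof; it is a heuristic outline with the main step missing.

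Two smaller points on the parts you do sketch. First, the claim that any weak* limit is a \emph{probability} measure on $\mathcal{E}_{\lambda_0}$ is not automatic: one needs tightness of the family $(\mu_{\phi_j^{(n)}})$ to rule out escape of mass, and the paper's discussion around Proposition~\ref{P15} (energy $e(F)\leq 1$ with possible deficit) shows this is a genuine issue, not a formality. Second, your covariance computation assumes that the normalization \eqref{energie} survives in the limit, which is again a uniform-integrability question rather than a consequence of weak* convergence alone. Both of these are subsidiary to the main gap, but they would need attention in any serious attack on the conjecture.
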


\subsection{Probability measures on $\mathcal{D}' (B)$} To conclude this section we raise some general problem related to Conjecture \ref{C1}.

Let $\mu$ be a probability measure on $\mathcal{D}' (B)$ and suppose that it is invariant under the $G$-action given by some representation $\pi_s$. Then $(\mathcal{D}' (B) , \mu)$ is a probability space endowed with a (non free) action of $G$ and the push-forward of $\mu$ under the stabilizer map defines an IRS of $G$. We shall say that the IRS is {\it induced} from $\mu$. 

As an application of the Nevo-Stuck-Zimmer Theorem \cite{StuckZimmer,Nevo} ergodic IRSs in higher rank simple Lie groups are classified, see \cite[Theorem 1.14]{7samurai}. A very natural similar question would be to classify all probability measures on $\mathcal{D}' (B)$ that are invariant and ergodic under the $G$-action given by some representation $\pi_s$. Let us more modestly first describe some families of examples of such measures. 

Let $s \in \Lambda^+$. The standard Gaussian probability measure on $\mathcal{D}' (B)$ is invariant and ergodic under the $G$-action given by the representation $\pi_s$. It moreover follows from \eqref{e:whitecov} applied to $f=g=1$ that if satisfies the following normalisation:
\begin{equation} \label{normalisation}
\int_{\mathcal{D} ' (B)} |T (1)|^2 d\mu_{\rm Gauss} (T) =1.
\end{equation}
There are many other such measures. 

Indeed: let $\Gamma$ be a uniform, torsion free, discrete subgroup of $G$ and let $\phi$ be a $\Gamma$-invariant function in $\mathcal{E}_s$. We have associated to $\phi$ a probability measure $\mu_\phi$ on 
$\mathcal{E}_s$. From this one can get a probability measure on $\mathcal{D}' (B)$. In loose terms we push forward the measure using the inverse of $p_s$. To give a formal construction, one can proceed as follows.

It corresponds to $\phi$ an embedding of $\pi_s$ as a direct summand of $L^2 (\Gamma \backslash G)$. Denote by $\mathcal{H}$ the space of $\pi_s$. Since $L^2 (\Gamma \backslash G)$ is self-dual, the dual representation $(\pi_s ' , \mathcal{H} ')$ also occurs as a direct summand in 
$L^2 (\Gamma \backslash G)$. The inclusion
$$i : \mathcal{H} ' \hookrightarrow L^2 (\Gamma \backslash G)$$
maps the subspace of smooth vectors $\mathcal{H}' {}^{\infty} \subset \mathcal{H} '$ to smooth functions, which can be evaluated at the identity. The evaluation map 
$$t : v \mapsto i(v) (e) \quad (v \in  \mathcal{H}' {}^{\infty})$$
is continuous with respect to the topology of $\mathcal{H}' {}^{\infty}$, and thus defines a distribution vector for $\pi_s$ --- a $\Gamma$-invariant distribution vector in $\mathcal{H}^{-\infty} = \mathcal{D}' (B)$ since $i(v')$ is a $\Gamma$-invariant function. The orbit of this vector under the $\pi_s (G)$-action therefore yields a continuous map 
$$G/\Gamma \to \mathcal{D} ' (B).$$
We denote by $\overline{\mu}_{\Gamma , \phi}$ the push-forward of the normalized Haar measure on $G / \Gamma$ by this map. It defines a probability measure on $\mathcal{D}' (B)$ that is invariant and ergodic under the $G$-action given by the representation $\pi_s$. If moreover $\phi$ is of \emph{normalized $L^2$-norm $1$}, i.e. satisfies 
$$\frac{1}{\mathrm{vol} (\Gamma \backslash G)} \int_{\Gamma \backslash G} |\phi|^2 = 1$$
then $\overline{\mu}_{\Gamma , \phi}$ satisfies \eqref{normalisation} or equivalently:
$$\int_{\mathcal{D} ' (B)} |T (1)|^2 d\overline{\mu}_{\Gamma , \phi} (T) = \int_{\mathcal{E}_s} |\psi (e) |^2 d\mu_{\phi} (\psi ) = \frac{1}{\mathrm{vol} (\Gamma \backslash G)} \int_{G/\Gamma} |\phi (g^{-1})|^2 dg =1.$$

The IRS induced by $\widehat{\mu}_{\Gamma , \phi}$ is supported on the conjugacy class of a lattice. In rank $1$ where one can construct many interesting IRSs, e.g. associated to normal subgroups of a lattice (see \cite{IMRN} for more examples), one can similarly construct measures on $\mathcal{D}' (B)$ from eigenwaves on the corresponding unimodular random $X$-manifolds. 

 A general interesting problem would be to determine the possible weak* accumulation points of families of such measures when the corresponding IRSs BS-converge toward the trivial one. Measures $\mu$ whose induced IRS are trivial indeed correspond to aperiodic limit measures in Conjecture \ref{C1}. 

Any such accumulation point is a measure on $\mathcal{D}' (B)$ that is both invariant and ergodic under the $\pi_s (G)$-action. It is therefore natural to ask for a classification of mean zero, normalized probability measures on $\mathcal{D}' (B)$ that are both invariant and ergodic under the $\pi_s (G)$-action and whose induced IRSs are trivial.

\section{The Quantum Ergodicity Theorem}\label{s:qes1}

In this section, we assume $G$ is of rank $1$ and we take a deterministic point of view to address the question of two-point correlations of eigenfunctions. 
If $\phi_{s_n}$ is an eigenfunction of $L^2$-norm $1$, then we would like to show that in the Benjamini-Schramm limit, and when $s_n \to s$, the correlation function $\phi_{s_n}(x)\phi_{s_n}(y)$ is proportional to $\varphi_s(d(x,y))$, where $\varphi_s$ is the spherical function and by abuse of notation we write $\varphi_s(d(x,y)) = \varphi_s(a_r)$ for $r = d(x,y)$, where $a_r = \exp(r H)$ and $H \in \ag \simeq \R$ is of norm $1$. In other words we want to show that the two-point correlation function of the eigenfunctions converges to the two-point correlation function (or the covariance kernel) of the standard Gaussian wave associated with $s$ (see Definition \ref{d:gauss}). We are able to prove a weak form of this via a quantum ergodicity theorem.

Let $\Gamma_n $ be a uniformly discrete sequence of lattices in $G$ that BS-converges toward the trivial group, and let $A^{(n)} : G \times G \to \R$ be a sequence of kernels satisfying
\begin{equation}\label{e:ker1} \forall \gamma \in \Gamma_n \quad A^{(n)}(\gamma x, \gamma y) = A^{(n)}(x,y)\end{equation}
and
\begin{equation}\label{e:ker2} \forall k_1,k_1 \in K \quad A^{(n)}(xk_1, yk_2) = A^{(n)}(x,y)\end{equation}
for any $x,y \in G$. 
We assume moreover that there exists $M >0$ such that
\begin{equation}\label{e:ker3}
A^{(n)}(x,y) = 0 \quad \text{when} \quad d(x,y) > M.
\end{equation}
This defines an operator $\mathbf{A}^{(n)}$ on $\Gamma_n \backslash X = \Gamma_n \backslash G / K$, by the formula
$$ \mathbf{A}^{(n)} f (x) = \int_G A^{(n)}(x,y) f(y) \, dy,$$
valid for any $f \in C (\Gamma_n \backslash G / K)$.

Let $\{ \phi_j^{(n)} \}$ be an orthogonal basis of Laplacian eigenfunctions on $\Gamma_n \backslash X$. 
We shall see each $\phi_j^{(n)}$ as a function of norm $1$ in $L^2 (\Gamma_n \backslash G)$, with respect to the scalar product 
$$\langle f_1 , f_2 \rangle_{L^2 (\Gamma_n \backslash G)} = \frac{1}{\mathrm{vol} (\Gamma_n \backslash G)} \int_{\Gamma_n \backslash G} f_1 (g) f_2 (g) dg,$$ 
and denote by $is_j^{(n)}$ the parameter (in $i\mathfrak{a}^*$) of the representation it generates in the (quasi-)regular representation $\rho_{\Gamma_n \backslash G}$ in $L^2 (\Gamma_n \backslash G)$. This means that the eigenvalue $\lambda_j^{(n)}$ of the eigenfunction $\phi_j^{(n)}$ is given by
$$ \lambda_j^{(n)} = \rho^2 + (s_j^{(n)})^2,$$
(see Section \ref{s:notation}). In particular if $G = \SL(2,\R)$, then $\rho^2 = \frac14$.
We have the following \emph{quantum ergodicity} theorem.

\begin{theorem}\label{T2}
Assume that the representation of $G$ in $\oplus_n L_0^2 (\Gamma_n \backslash G)$ has spectral gap. Let $M$ be a positive real number and let  $(A^{(n)})_{n \in \mathbb{N}}$ be a uniformly bounded sequence of kernels on $G\times G$ satisfying \eqref{e:ker1}, \eqref{e:ker2} and \eqref{e:ker3}. Fix $is_0 \in \Lambda^+$. There exists a sequence $(\delta_n )$ of positive real numbers converging to $0$ such that letting  
$$N(\delta_n , \Gamma_n ) = \# \{ j \; : \; s_j^{(n)} \in [s_0 -\delta_n , s_0 + \delta_n] \} .$$
and $I_n= I_n(s_0) :=  [s_0 -\delta_n , s_0 + \delta_n] $, we have:
\begin{equation}\label{e:QEvariance}
\frac{1}{N(\delta_n , \Gamma_n )} \sum_{s_j^{(n)} \in I_n } \left| \langle  \phi_j^{(n)} , \mathbf{A}^{(n)} \phi_j^{(n)} \rangle_{L^2 (\Gamma_n \backslash G )} - \langle \mathbf{A}^{(n)} \rangle_{s_0}   \right|^2 \to 0
\end{equation}
as $n$ tends to infinity, where 
$$  \langle \mathbf{A}^{(n)} \rangle_{s} = \frac1{\vol(\Gamma_n\backslash G)} \int_{\Gamma_n\backslash G} \int_G A^{(n)}(x,y) \varphi_s(x^{-1}y) \, dxdy,$$
is the average of the kernel $A^{(n)}$ against the spherical function of spectral parameter $s$.
\end{theorem} 

\begin{unremark}
In the theorem the sequence $(\delta_n)$ depends on $(A^{(n)})_{n \in \mathbb{N}}$. However the proof will show (and even rely on the fact)  that if $\delta$ is any positive real number such that the interval $I_\delta =[s_0 - \delta , s_0 + \delta]$ is contained in the tempered spectrum, i.e. $i I_\delta \subset\Lambda^+$, then 
\begin{equation} \label{E:not-shrinking}
\frac{1}{N(\delta , \Gamma_n )} \sum_{s_j^{(n)} \in I_\delta } \left| \langle  \phi_j^{(n)} , \mathbf{A}^{(n)} \phi_j^{(n)} \rangle_{L^2 (\Gamma_n \backslash G )} - \langle \mathbf{A}^{(n)} \rangle_{s_j^{(n)}}   \right|^2 \to 0
\end{equation}
as $n$ tends to infinity; see the proof of Lemma \ref{t:shrinking window}.

In the proof of Theorem \ref{T2} we disintegrate the kernel $A^{(n)}$ into components $A_r^{(n)} \in \mathcal{D}'(G\times G)$ supported on $(x,y) \in G \times G$ such that $d(x,y) = r$, and prove \eqref{e:QEvariance} for each component. For $r=0$, the operator $\mathbf{A}_r^{(n)}$ is simply the multiplication by a function $a^{(n)} : G\to \R$ and this gives
\begin{equation*}
\frac{1}{N(\delta_n , \Gamma_n )} \sum_{s_j^{(n)} \in I_n } \left| \langle  \phi_j^{(n)} , a^{(n)} \phi_j^{(n)} \rangle_{L^2 (\Gamma_n \backslash G )} - \frac1{\vol(\Gamma_n\backslash G)} \int_{\Gamma_n\backslash G} a^{(n)}(g) \, dg   \right|^2 \to 0,
\end{equation*}
when $n\to +\infty$. Taking a fixed positive $\delta$ as in \eqref{E:not-shrinking} we get 
\begin{equation*}
\frac{1}{N(\delta , \Gamma_n )} \sum_{s_j^{(n)} \in I_\delta } \left| \langle  \phi_j^{(n)} , a^{(n)} \phi_j^{(n)} \rangle_{L^2 (\Gamma_n \backslash G )} - \frac1{\vol(\Gamma_n\backslash G)} \int_{\Gamma_n\backslash G} a^{(n)}(g) \, dg   \right|^2 \to 0.
\end{equation*}
In particular on recovers the Quantum Ergodicity theorem for hyperbolic surfaces of \cite{LMS} where only multiplication by functions are considered.
\end{unremark}

\section{A few simplifications}\label{s:qes2}
To prove Theorem \ref{T2} we first proceed by a series of reductions at a fixed level. We will therefore drop the index $n$ in what follows, adding it back only when needed. 
Note that
\begin{multline}\label{e:QEvariance2}
\sum_{s_j \in I } \left| \langle  \phi_j , \mathbf{A} \phi_j \rangle_{L^2 (\Gamma \backslash G )} - \langle \mathbf{A} \rangle_{s_0}   \right|^2 \\
 \lesssim  \sum_{s_j \in I } \left| \langle  \phi_j, \mathbf{A} \phi_j \rangle_{L^2 (\Gamma \backslash G )} - \langle \mathbf{A} \rangle_{s_j}   \right|^2 +  \sum_{s_j \in I } \left|  \langle \mathbf{A} \rangle_{s_j} -  \langle \mathbf{A} \rangle_{s_0}   \right|^2.
\end{multline}
We will bound the two sums on the right-hand side separately. The first one is the main part of the proof, and we will need ergodic theory to estimate it. The second involves only a spectral density estimate. Let us first introduce a few useful objects.

\subsection{Disintegration and radial averages}\label{s:disintegration}
We define the operator $\mathbf{A}_r$ acting on functions $f \in C(G/K)$ by
$$ \mathbf{A}_r f(x) = \int_K A(x, x k a_r ) f(x k a_r) dk. $$
It can be seen as a radial disintegration of $\mathbf{A}$ such that we have
\begin{equation}\label{e:disintegration}
 \mathbf{A} = \int_0^M \mathbf{A}_r \, \sinh(\rho r)dr,
 \end{equation}
where $\rho$ is defined in Section \ref{s:notation}, and here $\rho \in \R_+$ because we are in rank $1$. Because of \eqref{e:ker3}, the map $r \mapsto [A]_r$ is compactly supported in $[0,M]$.
We also define an average of the kernel $A$ over geodesic segments of length $r$
\begin{equation}\label{e:average simple}
[ A ]_r =  \frac1{\vol(\Gamma\backslash G)} \int_{\Gamma\backslash G} A(x, x a_r) \, dx,
\end{equation}
and we denote by $[ \mathbf{A} ]$ the convolution operator with radial kernel $r \mapsto [ A ]_r $, i.e.
$$ [ \mathbf{A} ] f(x) = \int_{G/K} [ A ]_{d(x,y)} f(y) \, dy.$$

As $G$ is of rank $1$, via the decomposition $g = k_1 \exp_G(H) k_2$ with $H \in \ag \simeq \R$ and $k_1,k_2\in K$ we can see a radial kernel such as $[A]_r$ as a function on $K \backslash G / K$. Recall that if $F \in C_c^\infty (K \backslash G / K)$ then, for every $is \in i\mathfrak{a}^*$, the spherical function $\varphi_s$ is an eigenfunction of the convolution operator associated with $F$. The {\it spherical transform} $\widehat{k} (s )$ is defined as the corresponding eigenvalue, i.e. 
\begin{equation}\label{e:sphertrans}
F * \varphi_s = \widehat{F} (s ) \varphi_s .
\end{equation}
More generally any eigenfunction of eigenvalue $\lambda$ is an eigenfunction of the convolution by $F$, with eigenvalue $\hat F (s)$ where we recall the parametrization $\lambda = \rho^2 + s^2$. The spherical transform extends to an $L^2$-isometry between $L^2 (K \backslash G / K)$ and $L^2 (\mathfrak{a}^* , \mu_{\rm Planch})$ for an appropriate (and explicit, see \cite{HC}) $W$-invariant measure $\mu_{\rm Planch}$ on $\mathfrak{a}^*$, the {\it Plancherel measure}, which is absolutely continuous with respect to the Haar measure.

Let us now record a few useful properties.
\begin{lem}\label{l:average}
We have $$[[\mathbf{A}]] = [\mathbf{A}]$$ and if $[A]$ is the kernel of $[\mathbf{A}]$, that is $[A](x,y) = [A]_{d(x,y)}$, then
$$ [[A]]_r = [A]_r.$$
Moreover the two quantities $\langle \mathbf{A} \rangle_{s_j}$ and $[\mathbf{A}]$ are related by the formula
$$\langle \mathbf{A} \rangle_{s_j} =  \langle \phi_j, [\mathbf{A}] \, \phi_j \rangle,$$
for any $L^2$-normalized eigenfunction $\phi_j$ of eigenvalue $\lambda_j = \rho^2 + s_j^2$.
\end{lem}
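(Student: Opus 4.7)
The first two identities follow directly from the definitions. The integral kernel of $[\mathbf{A}]$ is $[A](x,y) = [A]_{d(xK, yK)}$, and since $d(xK, xa_r K) = r$, we have $[A](x, xa_r) = [A]_r$, which is constant in $x$. Averaging over $x \in \Gamma \backslash G$ therefore gives
\[
[[A]]_r = \frac{1}{\vol(\Gamma \backslash G)} \int_{\Gamma \backslash G} [A]_r\, dx = [A]_r,
\]
and consequently $[[\mathbf{A}]] = [\mathbf{A}]$ because both are convolution operators with the same radial kernel.

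For the third identity, view $[A]$ as a bi-$K$-invariant function on $G$ by setting $[A](g) = [A]_r$ whenever $g \in K a_r K$. Then $[\mathbf{A}]$ is the convolution operator by $[A]$ acting on $\Gamma \backslash G / K$, so by \eqref{e:sphertrans} we have $[\mathbf{A}] \phi_j = \widehat{[A]}(\lambda_j)\, \phi_j$ and hence $\langle \phi_j, [\mathbf{A}]\, \phi_j \rangle = \widehat{[A]}(\lambda_j)$, using that $\phi_j$ is $L^2$-normalized. It therefore suffices to check that $\langle \mathbf{A} \rangle_{\lambda_j} = \widehat{[A]}(\lambda_j)$.

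Substituting $g = x^{-1} y$ in the integral defining $\langle \mathbf{A} \rangle_{\lambda_j}$ and writing $g = k_1 a_r k_2$ in the $KAK$ decomposition, the bi-$K$-invariance of $\varphi_{\lambda_j}$ reduces $\varphi_{\lambda_j}(g)$ to $\varphi_{\lambda_j}(a_r)$, while the right $K$-invariance of $A$ in its second argument eliminates $k_2$. One obtains
\[
\langle \mathbf{A} \rangle_{\lambda_j} = \int_K \int_0^\infty \left( \frac{1}{\vol(\Gamma \backslash G)} \int_{\Gamma \backslash G} A(x, x k_1 a_r)\, dx \right) \varphi_{\lambda_j}(a_r)\, J(r)\, dr\, dk_1,
\]
where $J(r)$ is the Jacobian of the $KAK$ decomposition. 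For each fixed $k_1 \in K$, the change of variables $x \mapsto x k_1^{-1}$ is measure-preserving on $\Gamma \backslash G$, and combined with the right $K$-invariance of $A$ in its first argument it replaces $A(x, x k_1 a_r)$ by $A(x, xa_r)$. The bracketed integral therefore equals $[A]_r$, independently of $k_1$. Integrating out $k_1$ yields the integral formula defining $\widehat{[A]}(\lambda_j)$, completing the proof. The computation is essentially bookkeeping; the only genuine point is the symmetrization of the $K$-average through a right-translation on $\Gamma \backslash G$, which exploits simultaneously the $K$-invariance of $A$ in both arguments and the $\Gamma$-invariance of the Haar measure.
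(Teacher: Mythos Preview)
Your proof is correct and follows essentially the same approach as the paper: both identify $\langle \phi_j, [\mathbf{A}]\,\phi_j\rangle$ with the spherical transform $\widehat{[A]}(\lambda_j)$ and then relate this to $\langle \mathbf{A}\rangle_{\lambda_j}$ via the $KAK$ decomposition together with the right $K$-invariance of $A$ in each argument and a right translation $x\mapsto xk_1^{-1}$ on $\Gamma\backslash G$. The paper runs the computation in the opposite direction (starting from the spherical-transform integral and unfolding to reach $\langle \mathbf{A}\rangle_{\lambda_j}$), but the substance is identical.
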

\begin{proof}
The first two properties can be checked easily. Let us prove that
$$\langle \mathbf{A} \rangle_{s_j} =  \langle \phi_j, [\mathbf{A}] \, \phi_j \rangle.$$
We know that $\phi_j$ is an eigenfunction of $[\mathbf{A}]$ with eigenvalue given by the spherical transform of the radial kernel $r \mapsto [A]_r$ evaluated at $s_j$. By definition of the spherical transform, this eigenvalue is equal to 
$$ \int_0^M [A]_r \, \varphi_{s_j}(r) \, \sinh(\rho r) dr. $$
As $\phi_j$ is $L^2$-normalized we have
\begin{align*}
 \langle \phi_j, [\mathbf{A}] \, \phi_j \rangle = \int_0^M [A]_r \, \varphi_{s_j}(r) \, \sinh(\rho r) dr.
\end{align*}
We now use the expression of $[A]_r$ and the $K$-invariance of $A$
\begin{align*}
 \langle \phi_j, [\mathbf{A}] \, \phi_j \rangle &=\int_0^M \frac1{\vol(\Gamma\backslash G)} \int_{\Gamma\backslash G} A(x, x a_r) \, dx \, \varphi_{s_j}(r) \, \sinh(\rho r) dr\\
  &=\int_0^M \int_K \int_K \frac1{\vol(\Gamma\backslash G)} \int_{\Gamma\backslash G} A(x k_1^{-1}, x a_r k_2) \, dx \, \varphi_{s_j}(r) \, \sinh(\rho r) dk_1 dk_2 dr\\
   &=\int_0^M \int_K \int_K \frac1{\vol(\Gamma\backslash G)} \int_{\Gamma\backslash G} A(x, x k_1 a_r k_2) \, dx \, \varphi_{s_j}(r) \, \sinh(\rho r) dk_1 dk_2 dr\\
  &=\int_G \frac1{\vol(\Gamma\backslash G)} \int_{\Gamma\backslash G} A(x,xy) \, dx \, \varphi_{s_j}(y) \, dy
\end{align*}
By a last change of variable $y \to xy$ and rearrangement of the integrals we obtain
$$ \langle \phi_j, [\mathbf{A}] \, \phi_j \rangle =  \frac1{\vol(\Gamma\backslash G)} \int_{\Gamma\backslash G} \int_G A(x,y)  \varphi_{s_j}(x^{-1}y) \, dx dy $$
as required.
\end{proof}

\subsection{Two simplifications}
What we just introduced allows us to make some simplifications. According to \eqref{e:QEvariance2}, we need to bound
$$\sum_{s_j \in I } \left| \langle  \phi_j , \mathbf{A} \phi_j \rangle_{L^2 (\Gamma \backslash G )} - \langle \mathbf{A} \rangle_{s_j}   \right|^2.$$
We will instead assume that $[\mathbf{A}] = 0$ and estimate
$$ \sum_{j: s_j \in I_n} \left| \langle \phi_j, \mathbf{A} \phi_j \rangle\right|^2.$$
Indeed we can then apply the estimate to
$$ \mathbf{B} = \mathbf{A} - [\mathbf{A}]$$
as by Lemma \ref{l:average} we have  $[\mathbf{B}] = [\mathbf{A}] - [[\mathbf{A}]] = 0$, and
$$ \sum_{j : s_j \in I_n} \left| \langle \phi_j , (\mathbf{A} - [\mathbf{A}]) \phi_j \rangle\right|^2 =  \sum_{j : s_j \in I_n} \left| \langle \phi_j , \mathbf{A} \phi_j \rangle - \langle \mathbf{A} \rangle_{s_j} \right|^2. $$
 By uniqueness of the kernel, our assumption $[\mathbf{A}] = 0$ means that
 \begin{equation}\label{e:zero average}
 \forall r\geq 0 \quad [A]_r = 0.
 \end{equation}

The second simplification is to use the disintegration \eqref{e:disintegration} in order to write
\begin{align*}
 \sum_{j: s_j \in I_n} \left| \langle \phi_j, \mathbf{A} \phi_j \rangle \right|^2
 & =  \sum_{j: s_j \in I_n} \left| \langle \phi_j , \left( \int_0^M \mathbf{A}_r \, \sinh(\rho r) dr \right) \phi_i \rangle \right|^2\\
 & =  \sum_{j : s_j \in I_n} \left| \int_0^M \langle \phi_j ,  \mathbf{A}_r \phi_j \rangle \sinh(\rho r) dr\right|^2\\
 &\leq M \sinh^2( \rho M) \int_0^M \left(\sum_{j : s_j \in I_n} \left| \langle \phi_j , \mathbf{A}_r \phi_j \rangle \right|^2\right) \, dr,
\end{align*}
where the last line is obtained by the Cauchy-Schwarz inequality.  This tells us that it is sufficient to estimate the term between brackets in the last line, as $M$ is fixed.

In conclusion, we have reduced the bound of the first term on the left-hand side of \eqref{e:QEvariance2} to bounding
$$\sum_{j : s_j \in I_n} \left| \langle \phi_j , \mathbf{A}_r \phi_j \rangle \right|^2,$$
assuming that $[A]_r = 0$. 

\subsection{Spectral averages}
Before estimating this term, let us look at the second term in \eqref{e:QEvariance2}, namely
$$ \sum_{s_i \in I } \left|  \langle \mathbf{A} \rangle_{s_j} -  \langle \mathbf{A} \rangle_{s_0}   \right|^2.$$
We denote by $F : K\backslash G / K \to \R$ the function corresponding to the radial kernel $r \mapsto [A]_r$. 
We have $$ \widehat F(s) =  \langle \mathbf{A} \rangle_{s}, $$ where $\widehat F$ is the spherical transform of $F$ (see Section \ref{s:disintegration}, in particular Lemma \ref{l:average}).
Reintroducing the index $n$ what we want to show is therefore that
$$
\frac{1}{N(\delta_n , \Gamma_n )} \sum_{j \; : \; s_j^{(n)} \in [s_0 -\delta_n , s_0 + \delta_n] } \left| \widehat{F}_n (s_j^{(n)}) -  \widehat{F}_n(s_0 ) \right|^2 \to 0.
$$
This is the content of the following lemma.

\begin{lem}\label{t:shrinking window}
Fix $is_0 \in \Lambda^+$. There exists a sequence $(\delta_n )$ of positive real numbers (depending on the sequence of test kernels $\mathbf{A}^{(n)}$) that converges to $0$ and satisfies the following  properties.
\begin{enumerate}
\item We have:
\begin{equation} \label{e:a=1}
\frac{1}{N(\delta_n , \Gamma_n )} \sum_{j \; : \; s_j^{(n)} \in [s_0 -\delta_n , s_0 + \delta_n] } \left| \widehat{F}_n (s_j^{(n)}) -  \widehat{F}_n (s_0 ) \right|^2 \to 0
\end{equation}
as $n$ tends to infinity. 
\item For all $n \geq 1$, we have $\delta_n \geq r_n^{-1}$. 
\end{enumerate}
\end{lem}
\begin{proof} By hypothesis $s_0$ correspond to an eigenvalue in the interior of the $L^2$-spectrum of $X$. Let $\chi$ be a smooth cutoff function supported in $[-1 , 1]$ and taking the constant value $1$ on $[-1/2 , 1/2]$. Identifying $\mathfrak{a}^*$ with $\R$ (recall that we suppose that $G$ is of rank 1), we shall see $\chi$ as a function on $\mathfrak{a}^*$. 

Now let $\varepsilon$ be a positive real number such that $[s_0 - 2\varepsilon , s_0 + 2\varepsilon]$ is contained in the $L^2$-spectrum of $X$. We define $\widehat \chi_{\varepsilon} (s ) = \chi \left(\frac{1}{2\varepsilon} (s -s_0) \right)$; it is a function equal to $1$ in the spectral interval $[s_0 -\varepsilon  , s_0 + \varepsilon]$ and compactly supported in $[s_0 -2\varepsilon , s_0 + 2\varepsilon]$.  Being compactly supported, $\widehat \chi_{\varepsilon}$ is the spherical transform of a rapidly decaying function $\chi_{\varepsilon}$ on $G$. 

Up to replacing each $F_n$ with $F_n- \frac{\widehat{F}_n (s_0 )}{\widehat{\chi}_{\varepsilon} (s_0 )} \chi_{\varepsilon}$, we may assume that $\widehat{F}_n (s_0 ) = 0$. The radial functions $F_n$ are uniformly bounded and supported in the ball of radius $M$, their spherical transforms $\widehat{F}_n$ therefore belong to a compact subspace $C$ of test functions on $\mathfrak{a}^*$.

Now \cite[Theorem 1.2]{7samurai} implies that the spectral measure of $\Gamma_n \backslash X$ weakly converges toward the spectral measure of $X$. For all positive $\delta$ such that 
$[s_0 - \delta , s_0 + \delta]$ is contained in the tempered spectrum, and for any test function $\widehat{F}$ on $\mathfrak{a}^*$, we have
$$\frac{1}{\mathrm{vol} (\Gamma_n \backslash G)}  \sum_{j \; : \; s_j^{(n)} \in [s_0 -\delta , s_0 + \delta] } \left| \widehat{F} (s_j^{(n)})  \right|^2 \to  \int_{[s_0 - \delta , s_0 + \delta]} \left| \widehat{F} (s) \right|^2 p(s) ds,$$
as $n$ tends to infinity, where the limit is uniform for $\widehat{F} \in C$. In particular we have
$$\frac{1}{\mathrm{vol} (\Gamma_n \backslash G)}  \sum_{j \; : \; s_j^{(n)} \in [s_0 -\delta , s_0 + \delta] } \left| \widehat{F}_n (s_j^{(n)})  \right|^2 \to  \int_{[s_0 - \delta , s_0 + \delta]} \left| \widehat{F}_n (s) \right|^2 p(s) ds,$$
as $n$ tends to infinity.
Since on the other hand 
$$\frac{N(\delta,\Gamma_n)}{\mathrm{vol} (\Gamma_n \backslash G)} \to  \int_{[s_0 - \delta , s_0 + \delta]} p(s) ds$$
as $n$ tends to infinity and the Plancherel density $p$ is not zero at $s_0$, we conclude that 
$$\frac{1}{N(\delta,\Gamma_n)}  \sum_{j \; : \; s_j^{(n)} \in [s_0 -\delta , s_0 + \delta] } \left| \widehat{F}_n (s_j^{(n)})  \right|^2 \to \frac{\int_{[s_0 - \delta , s_0 + \delta]} \left| \widehat{F}_n (s) \right|^2 p(s) ds}{ \int_{[s_0 - \delta , s_0 + \delta]} p(s) ds},$$
as $n$ tends to infinity. The sequence $(\delta_n)$ where $\delta_n$ is the infimum over all positive $\delta$ such that  
$$\frac{1}{N(\delta,\Gamma_n)}  \sum_{j \; : \; s_j^{(n)} \in [s_0 -\delta , s_0 + \delta] } \left| \widehat{F}_n (s_j^{(n)})  \right|^2 \leq 2 \frac{\int_{[s_0 - \delta , s_0 + \delta]} \left| \widehat{F}_n (s) \right|^2 p(s) ds}{ \int_{[s_0 - \delta , s_0 + \delta]} p(s) ds}$$ 
and 
$$\delta_n \geq r_n^{-1},$$
finally satisfies the desired properties.  
\end{proof}

\section{Main estimate}\label{s:qes4}
Recall that we need to bound
\begin{equation}\label{e:tobound}
\sum_{j : s_j \in I_n} \left| \langle \phi_j , \mathbf{A}_r \phi_j \rangle \right|^2,
\end{equation}
assuming that $[A]_r = 0$. 

Following the proof of \cite{LMS}, an essential ingredient is a wave propagation operator that we generalize here.
Let $k_t = \textbf{1}_{B_t} / \sqrt{\vol (B_t )}$, where 
$$B_t = \{ k a_r k' \; : \; k, k' \in K, r \leq t  \},$$
and $\textbf{1}_{B_t}$ is the characteristic function of the set $B_t$.
The convolution operator $\rho_{\Gamma \backslash G} (k_t)$ can be roughly seen as a wave propagator at time $t$.

We want to replace \eqref{e:tobound} with 
$$\sum_{j : s_j \in I_n} \left| \langle \phi_j  ,\frac1T \int_0^T \waveker \mathbf{A}_r \waveker \,dt \, \phi_j \rangle \right|^2$$
in order to take advantage of the ergodic properties arising from our spectral gap assumption. For this we first need to look at the action of $\waveker$ on eigenfunctions.

\subsection{Spectral side}We know that if $\psi$ is an eigenfunction associated with the eigenvalue $\lambda = \rho^2 + s^2$,
$$\rho_{\Gamma \backslash G} (k_t) \psi = h_t(s) \psi,$$
where $h_t(s) = \widehat k_t(s)$ is the spherical transform of $k_t$ (see \eqref{e:sphertrans}).
Since $\varphi_s (e) = 1$, it follows from the definition of the spherical transform that we have:
$$h_t (s) = \frac{1}{\sqrt{\vol (B_t )}} \int_{B_t} \varphi_{s} (g) \, dg.$$
The following lemma is classical, see e.g. \cite[Eq. (4.6.14)]{GangV}.
\begin{lem}
Let $H$ in $\mathfrak{a}$ be so that $\rho (H)>0$. Set $a_t = \exp (tH)$. There exists some positive real number $\delta$ such that for every positive real number $\varepsilon$, there exists a constant $C_\varepsilon$ (that depends only on $\varepsilon$ and $H$) such that for all real $s$ with $|s | \geq \varepsilon >0$ we have
$$\| e^{\rho (t H)} \varphi_{s} (a_t ) - B (is) (e^{-is \rho (tH )} + e^{is \rho (tH) }) \| \leq C_\varepsilon e^{-\delta t}.$$
\end{lem}
Here $B$ is some explicit analytic function, products of Gamma functions. Taking $H$ to be of norm $1$ we get:
\begin{equation*}
\begin{split} 
\int_{B_t} \varphi_{s} (g) dg & = \int_0^t \sinh (2 \rho u) \varphi_{s} (a_u ) du \\
& = 2B(is) \int_\varepsilon^t \cos (s \rho u ) e^{\rho u } du + O (e^{(\rho - \delta ) t})
\end{split}
\end{equation*}
as $t$ tends to infinity. It follows that for $t$ large 
$$\frac{1}{\sqrt{\vol (B_t )}} \int_{B_t} \varphi_{is} (g) dg$$
is close to 
$$\mathcal I_{t,s} = \frac{2B(is)}{e^{\rho t}} \int_\varepsilon^t \cos (s \rho u ) e^{\rho u } du.$$
By double integration by parts we find that
$$\mathcal I_{t,s} = \frac{2B(is)}{(1+s^2)\rho} \left( \cos(s\rho t) + s \sin(s\rho t)\right) + O(e^{-\rho t}).$$ 
and adding small intervals around $t_k = \frac{2\pi k}{s\rho}$ on which $\mathcal I_{t,s}$ is uniformly bounded from below (as is detailed in \cite[Section 8]{LMS}), we get that there exists a constant $C_s$ such that for any $T$ large enough
$$\frac{1}{T} \int_0^T |h_t(s) |^2 dt \geq C_s.$$
This can be done uniformly for $s \in I$.

As
$$  \langle  \phi_j , \mathbf{A}_r \, \phi_j \rangle  = \frac1{\frac1T\int_0^T |h_t(s)|^2 dt} \langle  \phi_j ,  \frac1T \int_0^T \rho_{\Gamma \backslash G} (k_t ) \,\mathbf{A}_r \, \rho_{\Gamma \backslash G} (k_t) \, dt  \, \phi_j \rangle,$$
we can now write
\begin{align*}
\sum_{j : s_j \in I} \left| \langle \phi_j, \mathbf{A}_r \phi_j \rangle \right|^2 
&= O_{I} \left( \sum_{j : s_j \in I_n} \left| \langle \phi_j, \frac1T \int_0^T \waveker \mathbf{A}_r \waveker \,dt \, \phi_j \rangle \right|^2 \right) \\
&= O_{I} \left(  \left\|  \frac1T \int_0^T \waveker \mathbf{A}_r \waveker \,dt \right\|_\text{HS}^2 \right),
\end{align*}
with an implied constant depending only on the interval $I$.
Estimating the Hilbert-Schmidt norm of this time average constitutes what we call the Geometric side of the proof.
\subsection{Geometric side}
We first show that the kernel whose Hilbert-Schmidt norm we want to compute can be expressed as the convolution of a function $b_r$, where we define
$$ b_r(g) = A(g,ga_r).$$
This is contained in the following lemma.
\begin{lem}
The kernel of the operator 
$$\frac{1}{T} \int_0^T \rho_{\Gamma \backslash G} (k_t ) \mathbf{A}_r \rho_{\Gamma \backslash G} (k_t ) dt $$
acting on $L^2 (\Gamma \backslash G / K)$ is 
$$\sum_{\gamma \in \Gamma} F(g , \gamma h)$$
where $F : G\times G \to \R$ satisfies the invariance properties \eqref{e:ker1} and \eqref{e:ker2}, and there exist a measurable function $\phi_{h,t,r}: G\to \R$ and a constant $m_{h,t,r} > 0$ such that
$$ F(g,h) =  \frac{1}{T} \int_0^T m_{g^{-1}h,t,r}  \rho_{\Gamma\backslash G} (\phi_{g^{-1}h,t,r}) b_r(g) dt, \quad g,h \in G.$$
We have more precisely
$$\phi_{h,t,r} = \frac{1}{\vol (B_t \cap hB_t a_{-r})} \mathbf{1}_{B_t \cap hB_t a_{-r}},$$
and
$$ m_{h,t,r} =  \frac{\vol (B_t \cap hB_t a_{-r})}{\vol (B_t)}$$ 
In particular $F(g,h)=0$ whenever $d(g,h) \geq 2T + r$. 
\end{lem}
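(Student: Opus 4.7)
\emph{Plan.} I would compute the unperiodized kernel by direct composition. Writing $\rho_{\Gamma\backslash G}(k_t)$ as the right-convolution operator $f \mapsto \int_G k_t(h)\,f(\cdot\, h)\,dh$, unfolding $\mathbf{A}_r$, and stacking the three operators, I obtain
$$[\rho_{\Gamma\backslash G}(k_t)\,\mathbf{A}_r\,\rho_{\Gamma\backslash G}(k_t) f](g)
= \iiint\limits_{G\times K \times G} k_t(h_1)\,A(gh_1,\, gh_1 k a_r)\,k_t(h_2)\,f(gh_1 k a_r h_2)\,dh_1\,dk\,dh_2.$$
I would then absorb the $K$-integral via the substitution $h_1 \mapsto h_1 k^{-1}$: the $K$-biinvariance of $k_t$ preserves $k_t(h_1)$, while the right $K$-invariance of $A$ in its first argument turns $A(gh_1 k^{-1},\, gh_1 a_r)$ into $b_r(gh_1)$, so the integrand becomes $k$-free. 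A change of variable $h = gh_1 a_r h_2$ then identifies the unperiodized kernel as
$$\widetilde K_t(g,h) = \int_G k_t(h_1)\,b_r(gh_1)\,k_t\bigl((gh_1 a_r)^{-1} h\bigr)\,dh_1.$$

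Setting $\eta = g^{-1}h$ and using the symmetry $B_t = B_t^{-1}$ together with the $K$-biinvariance of $B_t$, the two factors $k_t$ force $h_1 \in B_t \cap \eta B_t a_{-r}$, on which their product is $m_G(B_t)^{-1}$. Multiplying and dividing by $m_G(B_t \cap \eta B_t a_{-r})$ immediately yields
$$\widetilde K_t(g,h) = m_{\eta,t,r}\,\bigl[\rho_{\Gamma\backslash G}(\phi_{\eta,t,r})\,b_r\bigr](g),$$
with $m_{\eta,t,r}$ and $\phi_{\eta,t,r}$ exactly as defined in the statement. Averaging over $t\in[0,T]$ gives the function $F(g,h)$ claimed, and the kernel of the full averaged operator on $\Gamma\backslash G$ is the $\Gamma$-periodization $\sum_{\gamma\in\Gamma} F(g,\gamma h)$.

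The remaining verifications are direct: (i) joint $\Gamma$-invariance $F(\gamma g,\gamma h) = F(g,h)$ follows from $(\gamma g)^{-1}(\gamma h) = g^{-1}h$ together with $b_r(\gamma x) = A(\gamma x,\gamma x a_r) = A(x, xa_r) = b_r(x)$; (ii) right $K$-invariance in each variable, $F(gk_1, hk_2) = F(g,h)$, follows from $K$-biinvariance of $B_t$, which gives $B_t \cap k_1^{-1}\eta k_2 B_t a_{-r} = k_1^{-1}(B_t \cap \eta B_t a_{-r})$, after which the left translation by $k_1$ is absorbed by a change of variable in the convolution; (iii) the support bound comes from the triangle inequality, since $h_1 \in B_t \cap \eta B_t a_{-r}$ forces $\eta = h_1 a_r b^{-1}$ for some $b \in B_t$, so $d(g,h) = d(e,\eta) \leq 2t + r \leq 2T + r$.

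The main delicacy I expect is the $K$-bookkeeping in the first step, where the invariances of $A$ are only one-sided in each argument; one must thread the $k$ variable through the correct substitution so that the $K$-integral collapses cleanly. Matching the precise normalization of $\phi_{\eta,t,r}$ with the right-convolution convention for $\rho_{\Gamma\backslash G}(\phi)$ also requires some care. Once these two conventions are pinned down, the remainder of the argument is essentially routine manipulation of Haar integrals.
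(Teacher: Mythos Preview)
Your proposal is correct and follows essentially the same route as the paper. The only difference is the order of operations: the paper first applies Fubini to extract the kernel (landing on $\frac{1}{m_G(B_t)}\int_K\int_{gB_t\cap hB_t a_{-r}k^{-1}} A(x,xka_r)\,dx\,dk$) and then performs the substitution $x\mapsto xk$ to collapse the $K$-integral, whereas you collapse the $K$-integral first via $h_1\mapsto h_1 k^{-1}$ and then extract the kernel; both rely on the same ingredients ($K$-biinvariance of $B_t$, right $K$-invariance of $A$, and $B_t=B_t^{-1}$), and your explicit verification of the invariances \eqref{e:ker1}--\eqref{e:ker2} and of the support bound is a welcome addition that the paper leaves implicit.
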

\begin{proof}
We obtain by a simple computation and application of Fubini's theorem
\begin{align*}
&\rho_{\Gamma \backslash G} (k_t ) \mathbf{A}_r \rho_{\Gamma \backslash G} (k_t ) f(g) \\
&\quad = \frac1{\vol(B_t)} \int_K \int_{gB_t} \int_{xka_r B_t}  A(x,xka_r) f(h) \, dh \, dx \, dk \\
&\quad = \frac1{\vol(B_t)} \int_{G}  \left(\int_K \int_{gB_t\cap h B_t a_{-r} k^{-1}} A(x,xka_r) \, dx \, dk \right)\, f(h) \, dh
\end{align*}
So we have
$$F (g,h) = \frac{1}{T} \int_0^T \frac{1}{\vol (B_t )} \int_K \int_{gB_t \cap h B_t a_{-r} k^{-1}} A (x,xka_r) \, dx dk dt.$$
Doing a change of variable $x \mapsto xk$ we obtain
$$F (g,h) = \frac{1}{T} \int_0^T \frac{1}{\vol (B_t )} \int_K \int_{gB_tk \cap h B_t a_{-r}} A (xk^{-1},xa_r) \, dx dk dt.$$
We then use that $gB_t k = g B_t$ by definition of $B_t$ and that $A$ is right $K$-invariant to get
$$F (g,h) = \frac{1}{T} \int_0^T \frac{1}{\vol (B_t )} \int_{gB_t \cap h B_t a_{-r}} A (x,xa_r) \, dx dt.$$
By the change of variable $x \mapsto g^{-1}x$ we then have
\begin{align*}
F (g,h) &= \frac{1}{T} \int_0^T \frac{1}{\vol (B_t )} \int_{B_t \cap g^{-1} h B_t a_{-r}} A (gx,gxa_r) \, dx dt\\
&= \frac{1}{T} \int_0^T m_{g^{-1}h,t,r}  \rho_{\Gamma\backslash G} (\phi_{g^{-1}h,t,r}) b_r(g) dt.
\end{align*}

Now to see that $F(g,h)=0$ when $d(g,h) \geq 2T + r$ we note that 
$B_t \cap g^{-1}hB_t a_{-r}$
is the intersection of a ball of radius $t$ centered at $e$ and of a ball of radius $t+r$ centered at $g^{-1}h$. The intersection is empty when $g^{-1}h$ is at distance greater than $2t+r$ from $e$, or in other words if $d(g,h) \geq 2t + r$. By integrating over $t\in[0,T]$ we obtain that this intersection is empty whenever $d(g,h) \geq 2T + r$, in which case $F(g,h) = 0$.
\end{proof}

We therefore have:
\begin{equation*}
\begin{split}
\left\| \frac{1}{T} \int_0^T \rho_{\Gamma \backslash G} (k_t )\, \mathbf{A}_r \,\rho_{\Gamma \backslash G} (k_t ) \, dt \right\|_{\rm HS}^2 & = \int_{\Gamma \backslash G} \int_{\Gamma \backslash G} \left| \sum_{\gamma \in \Gamma} F(g , \gamma h) \right|^2 dg dh \\
& \leq \int_{\Gamma \backslash G} \int_G |F(g,h)|^2 dg dh \\
&\quad+ \int_{(\Gamma \backslash G)_{<2T+r}} \int_{\Gamma \backslash G} \left| \sum_{\gamma \in \Gamma} F(g , \gamma h) \right|^2 dg dh,
\end{split}
\end{equation*}
where in the second line we split between the points in $\Gamma \backslash G$ whose injectivity radius is $\geq 2T + r$ and those whose injectivity radius is $< 2T + r$. In the first case the sum is reduced to one term because $F(g,h)=0$ when $d(g,h) \geq 2T + r$. We can therefore take the sum out of the absolute value and this yields the integral over $G$.
Since the injectivity radius of $\Gamma \backslash G$ is assumed to be bounded away from $0$ by a uniform constant, the number of terms in the second sum is bounded by a constant times the volume of a ball of radius $2T+r$. It follows from this estimate on the number of terms and the Cauchy-Schwarz inequality that
\begin{align*}
\int_{\Gamma \backslash G} \left| \sum_{\gamma \in \Gamma} F(g , \gamma h)  \right|^2\, dh &\leq \vol (B_G (e, 2T+r)) \int_{\Gamma \backslash G} \sum_{\gamma \in \Gamma} |F(g , \gamma h)|^2 \, dh\\
&\leq  \vol (B_G (e, 2T+r))  \int_{G} |F(g , h)|^2 \, dh\\
&\leq \vol (B_G (e, 2T+r))^2 \|b_r\|_\infty^2.
\end{align*}
 Hence we have
\begin{multline} \label{EBS}
\left\| \frac{1}{T} \int_0^T \rho_{\Gamma \backslash G} (k_t )\, \mathbf{A}_r \,\rho_{\Gamma \backslash G} (k_t ) \, dt \right\|_{\rm HS}^2 
\\ \leq \int_{\Gamma \backslash G} \int_G |F(g,h)|^2 dg dh + O \left( \vol (B_G (e, 2T+r))^2 \vol (\Gamma \backslash G)_{<2T+r} \|b_r\|_\infty^2 \right) .
\end{multline}
Here the implied constants depend on $T$ and $r$ but not on $\Gamma$ (as long as the injectivity radius of $\Gamma \backslash G$ is bounded away from $0$ independently of $\Gamma$).

The change of variables $h \mapsto g^{-1}h$ then leads to:
\begin{equation*}
\begin{split}
\int_{\Gamma \backslash G} \int_G |F(g,h)|^2 dg dh & = \int_{\Gamma \backslash G} \int_{B_{2T+r}} \left|  \frac{1}{T} \int_0^T m_{g^{-1}h,t,r}  \rho_{\Gamma\backslash G} (\phi_{g^{-1}h,t,r}) b_r(g) dt\right|^2 dg dh \\
& = \int_{\Gamma \backslash G} \int_{B_{2T+r}} \left|  \frac{1}{T} \int_0^T m_{h,t,r}  \rho_{\Gamma\backslash G} (\phi_{h,t,r}) b_r(g) dt \right|^2 dg dh.
\end{split}
\end{equation*}
Writing $h=k_1a_uk_2$ the previous expression becomes
$$\int_{\Gamma \backslash G} \int_{K} \int_K \int_{0}^{2T+r} \left|  \frac{1}{T} \int_{((u-r)/2)_+}^T m_{k_1 a_u k_2,t,r}  \rho_{\Gamma\backslash G} (\phi_{k_1 a_u k_2,t,r}) b_r(g) dt \right|^2 dg dk_1 dk_2 \sinh (\rho u) du$$
where $((u-r)/2)_+ = \max (0,(u-r)/2)$.

\medskip

Now a general theorem of Nevo \cite[Theorem 4.1]{GNevo} implies the following lemma that we shall apply to the representation $\rho_{\Gamma \backslash G}$ in $L^2_0 (\Gamma \backslash G)$. This theorem is key to our quantum ergodicity result and on a dynamical level expresses the exponential mixing of the geodesic flow, that we can control thanks to the spectral gap. 

\begin{theorem}\label{l:nevo}
Let $\pi$ be a unitary representation of $G$ with a spectral gap and no invariant vectors. Then, there exists a positive constant $\theta$ (that depends only on the spectral gap) such that 
$$\|\pi (\mathbf{1}_{E}/\vol(E)) \| \leq  \vol (E)^{-\theta},$$
for any measurable subset $E \subset G$.
\end{theorem}
The theorem applied to our setting says that for any $\Gamma$-periodic function $f$ on $G$ such that
$ \int_{\Gamma \backslash G} |f(g)|^2 dg < \infty$, the ``ergodic'' average
$$\rho_{\Gamma \backslash G}(\mathbf{1}_{E}/\vol(E)) f(g) = \frac1{\vol(E)} \int_{gE} f(h) \, dh $$
converges in $L^2$ towards $$ \frac1{\vol(\Gamma \backslash G)} \int_{\Gamma \backslash G} f(g) \, dg,$$
exponentially fast when $\vol(E) \to +\infty$ (i.e. at a rate $\vol(E)^{-\theta}$, where $\theta$ is determined by the spectral gap of the Laplacian).

The sets $B_t$ and $k_1 a_u k_2 B_t a_{-r}$ are balls of radius $t$ and $t+r$ whose center are at distance $u$. Their intersection is bounded by a ball of radius $t-(u-r)_+/2$. We thus have 
$$\vol (B_t \cap k_1 a_u k_2 B_t a_{-r}) \lesssim \vol (B_{t-(u-r)_+/2}) \lesssim e^{2\rho(t-(u-r)_+/2)},$$ 
where $\rho > 0$ depends only on $G$.
In particular
$$ m_{k_1 a_u k_2,t,r} \lesssim \frac{\vol (B_{t-(u-r)_+/2})}{\vol(B_t)} \lesssim e^{-\rho (u-r)_+}.$$
As we assumed in \eqref{e:zero average} that $[A]_r = 0$, we have $b_r \in L^2_0 (\Gamma \backslash G)$, i.e.
$$ \int_{\Gamma\backslash G} b_r(g) \, dg = \int_{\Gamma\backslash G} A(g,ga_r) \,dg = 0,$$
 and by Theorem \ref{l:nevo}
$$\| \rho_{\Gamma\backslash G} (\phi_{k_1 a_u k_2,t,r}) b_r \|_{L^2_0 (\Gamma \backslash G)} \lesssim e^{-2\theta \rho(t- (u-r)_+/2)}   \|b_r \|_{L^2 (\Gamma \backslash G)}. $$
Now applying Minkowski integral inequality to 
$$ \int_{0}^{2T+r} \int_{K} \int_K \int_{\Gamma \backslash G} \frac{1}{T^2} \left| \int_{((u-r)/2)_+}^T m_{k_1 a_u k_2,t,r}  \rho_{\Gamma\backslash G} (\phi_{k_1 a_u k_2,t,r}) b_r(g) dt  \right|^2 dg dk_1 dk_2 \sinh (\rho u) du$$
we obtain
\begin{equation*}
\begin{split}
&\int_{\Gamma \backslash G} \int_G |F(g,h)|^2 dg dh \\
&\quad \leq  \int_{0}^{2T+r} \int_{K} \int_K  \frac{1}{T^2} \left(  \int_{((u-r)/2)_+}^T m_{k_1 a_u k_2,t,r} \| \rho_{\Gamma\backslash G} (\phi_{k_1 a_u k_2,t,r}) b_r \|_{L^2_0 (\Gamma \backslash G)} dt \right)^2  dk_1 dk_2 \sinh (2\rho u) du \\
&\quad \leq  \int_{0}^{2T+r} \int_{K} \int_K  \frac{1}{T^2} \left( \int_{((u-r)/2)_+}^T e^{-\rho (u-r)_+} e^{-2\theta \rho(t- (u-r)_+/2)}   \|b_r \|_{L^2 (\Gamma \backslash G)} dt \right)^2  dk_1 dk_2 \sinh (2\rho u) du \\
&\quad \leq  \frac{ \|b_r \|^2_{L^2 (\Gamma \backslash G)}}{T^2}  \int_{0}^{2T+r} e^{-2\rho(1-\theta) (u-r)_+} \left( \int_{((u-r)/2)_+}^T  e^{-2\theta \rho t}   dt \right)^2  \sinh (2\rho u) du\\
&\quad \leq  \frac{ \|b_r \|^2_{L^2 (\Gamma \backslash G)}}{T^2\theta^2}  \int_{0}^{2T+r} e^{-2\rho(1-\theta) (u-r)_+}  e^{-2\theta \rho (u-r)_+}  \sinh (2\rho u) du\\
&\quad \lesssim e^{2\rho r} \frac{\|b_r \|_{L^2 (\Gamma \backslash G)}^2}{T\theta^2} .
\end{split}
\end{equation*}
Inserting the last inequality into \eqref{EBS} we conclude that
$$\left\| \frac{1}{T} \int_0^T \rho_{\Gamma \backslash G} (k_t )\, \mathbf{A}_r \,\rho_{\Gamma \backslash G} (k_t ) \, dt \right\|_{\rm HS}^2 
\lesssim_r \frac{\|b_r \|_{L^2 (\Gamma \backslash G)}^2}{T \theta^2}  + \vol (B_G (e, 2T)^2) \vol (\Gamma \backslash G)_{<T} \|b_r\|_\infty^2.$$

\subsection{Conclusion of the proof} 
Adding back the index $n$ and integrating in $r$ we have
\begin{multline*}\left\| \frac{1}{T} \int_0^T \rho_{\Gamma_n \backslash G} (k_t )\, \mathbf{A}^{(n)} \,\rho_{\Gamma_n \backslash G} (k_t ) \, dt \right\|_{\rm HS}^2 \\
\lesssim_M \frac{\sup_{r\leq M} \|b_r^{(n)} \|_{L^2 (\Gamma_n \backslash G)}^2}{T \theta^2}  + \vol (B_G (e, 2T))^2 \vol (\Gamma_n \backslash G)_{<T} \sup_{r\leq M}  \|b_r^{(n)}\|_\infty^2.
\end{multline*}
Since $ \|b_r^{(n)} \|_{L^2 (\Gamma \backslash G)}^2 = O (  \|b_r^{(n)}\|_\infty^2 \mathrm{vol} (\Gamma_n \backslash G))$, the function $b_r^{(n)}$ is uniformly bounded and $N(\delta_n , \Gamma_n) \gtrsim  p(s_0) \delta_n \mathrm{vol} (\Gamma_n \backslash G)$, with $p(s_0) \neq 0$, choosing $T=\frac14 r_n$, with $r_n$ as in \eqref{limBS}, we get that
$$\frac{1}{N(\delta_n , \Gamma_n)} \left\| \frac{1}{T} \int_0^T \rho_{\Gamma_n \backslash G} (k_t )\, \mathbf{A}^{(n)} \,\rho_{\Gamma_n \backslash G} (k_t ) \, dt \right\|_{\rm HS}^2  \lesssim \frac{1}{r_n \delta_n} + \delta_n^{-1} \mathrm{vol} (B_G (e, r_n)) \alpha_n.$$
We have used here that $\mathrm{vol} (B_G (e, r_n/2))^2 \lesssim \mathrm{vol} (B_G (e, r_n))$. It now follows from \eqref{limBS} and $\delta_n \geq r_n^{-1}$ that the right hand side tends to $0$ when $n$ tends to infinity. This concludes the proof of Theorem \ref{T2}.

\section{A random viewpoint}\label{s:randomization}

In this section we prove Theorem \ref{I:T5}. This is done by considering two processes associated to a fixed spectral window $I$ and a compact quotient $\Gamma\backslash X$. Here  $\Gamma$ is a cocompact lattice in $G$ and $I \subset \ag^*$ is a compact subset.

\subsection{Two random processes}

Consider an orthogonal basis $\phi_1,\ldots, \phi_k$ of $\Gamma$-invariant eigenfunctions of the Laplacian on $X$ with eigenvalues in $I$ and such that $\| \phi_j \|_2 = \sqrt{\vol(\Gamma \backslash X)}$. Linear combinations of the (deterministic) eigenfunctions $\phi_1 , \ldots , \phi_k$ yield a map 
\begin{equation} \label{Process2}
\R^k  \to C^\infty (X); \ (c_1 , \ldots , c_k ) \mapsto \frac1{|c|} \sum_{j=1}^k c_j \phi_j,
\end{equation}
where $|c| = \left(\sum_{j=1}^k c_j^2\right)^{1/2}$. Putting on each factor $\R$ of $\R^k$ the Gaussian measure with mean $0$ and variance $1/ k$, and pushing this measure forward we get a measure $\lambda_{\Gamma , I} \in \mathbb{M}^1 (C^\infty (X))$. It is well known that if $X = (X_1,\ldots,X_k)$ is a vector of independent Gaussian random variables of mean $0$, then $X/|X|$ follows a uniform probability distribution on the unit sphere, hence $\lambda_{\Gamma , I}$ is the distribution of a random variable in $C^\infty (X)$ consisting in choosing uniformly at random a function in the unit sphere of $$\mathrm{span} \{ \phi_j :  s_j \in I \}.$$
The measure $\lambda_{\Gamma , I}$ is a Gaussian measure but it is not $G$-invariant. However --- being a random linear combination of $\Gamma$-invariant functions --- the measure $\lambda_{\Gamma , I}$ is supported on $\Gamma$-invariant functions $C^\infty (X)^\Gamma$. 
We can therefore BS-sample it to get a $G$-invariant measure on $\mathbb{M}^1 (C^\infty (X))$. One can think of two different ways to BS-sample it:
\begin{itemize}
\item First, take a $\lambda_{\Gamma , I}$-random $\phi$ in $C^\infty (X)$, and look at it from a random
point. The result is a $\lambda_{\Gamma , I}$-random element in $\mathbb{M}^1 (C^\infty (X))$, i.e. a probability measure $\alpha_{\Gamma , I}$ on $\mathbb{M}^1 (C^\infty (X))$.
To define it properly, consider the `BS-sampling' map
$$\Phi : C^\infty (X)^\Gamma \to \mathbb{M}^1 (C^\infty (X)); \ \phi \mapsto \mu_\phi$$
that associates to any $\Gamma$-invariant function $\phi \in C^\infty (X)$, the $G$-invariant probability measure $\mu_{\phi}$ on $C^\infty (X)$. We set
\begin{equation} \label{MesA}
\alpha_{\Gamma , I} = \Phi_* (\lambda_{\Gamma , I} ).
\end{equation}

\item Alternatively, take a random element $g$ in $G/\Gamma$, with respect to the Haar measure $\mu$, and restrict the
$\lambda_{\Gamma , I}$-random function $\phi$ on balls $g^{-1} (B(e , R))$. The result is a
$\mu$-random element of $\mathbb{M}^1 (C^\infty (X))$, so it is also a probability measure $\beta_{\Gamma , I}$ on $\mathbb{M}^1 (C^\infty (X))$.
To define it properly, consider the map
$$\Psi : G/ \Gamma \to \mathbb{M}^1 (C^\infty (X)); \ g\Gamma \mapsto (L_g)_* \lambda_{\Gamma , I},$$
where $L_g : C^\infty (X) \to C^\infty (X)),$ $\phi \mapsto \phi (g^{-1} \cdot )$, $(g \in G)$ is the left-translation map. We set
\begin{equation} \label{MesB}
\beta_{\Gamma , I} = \Psi_* (\mu).
\end{equation}
\end{itemize}

\begin{unremark} 
In general these two processes are not the same. To see this on a simplified situation replace the measure $\lambda_{I , \Gamma}$ by the measure $\lambda$ that takes the
constant $1$ function with probability $1/2$ and the constant $-1$ function with probability $1/2$.

Then the measure $\alpha$, obtained by the first process, will be the Dirac measure on constant $1$ function with probability $1/2$ and the
Dirac on the constant $-1$ function with probability $1/2$. But the measure $\beta$, obtained by the second process, will be the Dirac measure
on $\lambda$. The first is not a Dirac measure, the second is, so even for this very simple example,
the two processes are not equal.
\end{unremark}

However, the expected values of $\alpha_{\Gamma , I}$ and $\beta_{\Gamma , I}$ are always equal. We denote by $\nu_{\Gamma , I}$ their common expected values; it is the element of $\mathbb{M}^1 (C^\infty (X))$ obtained by pushing forward the product of the Gaussian measure by the normalized Haar measure by the map
\begin{equation} \label{RIBH}
\R^k \times G/\Gamma \to C^\infty (X)
\end{equation}
which maps $((c_1 , \ldots , c_k ),g)$ to $\sum_{j=1}^k c_j \phi_j (g^{-1} \cdot )$. 

\subsection{The theorem} Let $\Gamma_n $ be a uniformly discrete sequence of lattices in $G$ that BS-converges toward the trivial group. 
Let $\{ \phi_1^{(n)} , \ldots , \phi_{k_n}^{(n)} \}$ be a normalized orthonormal basis of the subspace 
of $C^\infty (X)^{\Gamma_n }$ spanned by eigenfunctions with eigenvalues in some interval $I_n = [s_0 - \delta_n , s_0 +\delta_n]$ with $\delta_n$ satisfying the condition stated after \eqref{limBS}. Suppose furthermore that $\beta'$ in the definition of $\delta_n$ is sufficiently small. Then we have the following theorem on the asymptotics of the measures $\alpha_{\Gamma_n , I_n}$ and $\beta_{\Gamma_n , I_n}$ on $\mathbb{M}^1 (C^\infty (X))$.

\begin{theorem} \label{T1}
The measures $\alpha_{\Gamma_n , I_n}$ and $\beta_{\Gamma_n , I_n}$ both weakly converge toward the Dirac mass concentrated at the Gaussian random wave $\mu_{{\rm Gauss} , s_0} \in \mathbb{M}^1 (C^\infty (X))$.
\end{theorem}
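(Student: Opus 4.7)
The plan is to first establish the convergence of $\beta_{\Gamma_n, I_n}$ via a two-point local Weyl law, and then to bootstrap to $\alpha_{\Gamma_n, I_n}$ using the $G$-ergodicity of $\mu_{{\rm Gauss}, \lambda_0}$ (Lemma \ref{L:Erg}).

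For every $g \in G/\Gamma_n$, the pushed-forward measure $(L_g)_*\lambda_{\Gamma_n, I_n}$ is the law of the random function $\sum_j c_j \phi_j^{(n)}(g^{-1}\cdot)$, where $(c_1, \ldots, c_{k_n})$ is uniform on the unit sphere of $\R^{k_n}$. Its covariance kernel is
$$K_n^g(x,y) = \frac{1}{k_n}\sum_{j=1}^{k_n}\phi_j^{(n)}(g^{-1}x)\,\phi_j^{(n)}(g^{-1}y) = \frac{1}{k_n}P_{I_n}(g^{-1}x, g^{-1}y),$$
where $P_{I_n}$ is the kernel of the spectral projector onto the span of the $\phi_j^{(n)}$. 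Since $k_n \to +\infty$ by Corollary \ref{C21}, Poincar\'e's lemma on high-dimensional spheres implies that the finite-dimensional marginals of $(L_g)_*\lambda_{\Gamma_n, I_n}$ become Gaussian with covariance matrix $(K_n^g(x_i,x_j))$. The first step is therefore to prove the two-point local Weyl law
$$K_n^g(x,y) \to \varphi_{\lambda_0}(x^{-1}y) \quad \mbox{in } L^2(g \in G/\Gamma_n)$$
uniformly for $x,y$ in any fixed compact subset of $X$; this will imply $(L_g)_*\lambda_{\Gamma_n, I_n} \to \mu_{{\rm Gauss}, \lambda_0}$ in probability in $g$, and hence $\beta_{\Gamma_n, I_n} \to \delta_{\mu_{{\rm Gauss}, \lambda_0}}$.

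Once this is in hand, by Fubini $\mathbb{E}(\alpha_{\Gamma_n, I_n}) = \mathbb{E}(\beta_{\Gamma_n, I_n}) = \nu_{\Gamma_n, I_n}$, and continuity of the expectation map for the weak topology yields $\mathbb{E}(\alpha_{\Gamma_n, I_n}) \to \mu_{{\rm Gauss}, \lambda_0}$. Now let $\alpha_\infty$ be any weak accumulation point of $(\alpha_{\Gamma_n, I_n})$. Each $\alpha_n$ is supported on the closed set of $G$-invariant probability measures on $C^\infty(X)$ (because $\mu_\phi$ is $G$-invariant by construction, cf.\ Section \ref{S:BS}), so the same is true of $\alpha_\infty$, and $\mathbb{E}(\alpha_\infty) = \mu_{{\rm Gauss}, \lambda_0}$. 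This exhibits the $G$-ergodic measure $\mu_{{\rm Gauss}, \lambda_0}$ as a convex integral of $G$-invariant probability measures; refining each component through its own ergodic decomposition and invoking uniqueness of the ergodic decomposition of $\mu_{{\rm Gauss}, \lambda_0}$ (which is simply $\delta_{\mu_{{\rm Gauss}, \lambda_0}}$), we conclude that $\alpha_\infty$-almost every $\mu$ equals $\mu_{{\rm Gauss}, \lambda_0}$, i.e., $\alpha_\infty = \delta_{\mu_{{\rm Gauss}, \lambda_0}}$. Since every subsequential limit is this Dirac mass, $\alpha_{\Gamma_n, I_n} \to \delta_{\mu_{{\rm Gauss}, \lambda_0}}$.

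The main obstacle is the two-point local Weyl law itself. Its trace (one-point) version is essentially Theorem \ref{t:shrinking window}, which smooths the sharp projector $\mathbf{1}_{I_n}(\Delta)$ by the rapidly decaying, effectively compactly supported kernel of Lemma \ref{Ltech}. The two-point version requires estimating $\int_{\Gamma_n\backslash G} |K_n^g(x,y) - \varphi_{\lambda_0}(x^{-1}y)|^2\, dg$ by a Selberg-type trace computation: the mean $\int K_n^g(x,y) dg$ is read off from the Plancherel identity applied to the spherical transform of the smoothed projector and concentrates on $\varphi_{\lambda_0}(x^{-1}y)$, while the fluctuations are governed by non-trivial $\Gamma_n$-orbits in balls of radius $\lesssim r_n$. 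The latter are negligible under the uniform discreteness of $(\Gamma_n)$ and the constraint $\delta_n \geq r_n^{-\beta'}$ --- precisely the BS-effective regime already set up in Section \ref{s:convol}, so the technical machinery needed is parallel to (though strictly richer than) that of the proof of Theorem \ref{t:shrinking window}.
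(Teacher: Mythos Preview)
Your strategy matches the paper's almost exactly: first show $\beta_{\Gamma_n,I_n}\to\delta_{\mu_{{\rm Gauss},\lambda_0}}$ by proving that the covariance kernel $K_n^g(x,y)$ of $(L_g)_*\lambda_{\Gamma_n,I_n}$ converges to $\varphi_{\lambda_0}(x^{-1}y)$ for most $g$, then deduce $\alpha_{\Gamma_n,I_n}\to\delta_{\mu_{{\rm Gauss},\lambda_0}}$ from $\mathbb{E}(\alpha)=\mathbb{E}(\beta)$ and the $G$-ergodicity of $\mu_{{\rm Gauss},\lambda_0}$.

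Two remarks on where you diverge slightly. First, you invoke Poincar\'e's lemma to pass from the uniform measure on a high-dimensional sphere to a Gaussian; the paper simply asserts that $(L_g)_*\lambda_{\Gamma_n,I_n}$ is Gaussian and appeals to Fernique's continuity theorem to reduce weak convergence to convergence of covariance kernels. Your observation that the normalized measure is not literally Gaussian is correct, and Poincar\'e's lemma is the honest way to bridge this (harmless since $k_n\to\infty$). Second, you propose an $L^2(g)$ estimate on $K_n^g(x,y)-\varphi_{\lambda_0}(x^{-1}y)$ and describe it as requiring machinery ``strictly richer'' than that of Theorem~\ref{t:shrinking window}. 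The paper in fact only proves the $L^1(g)$ estimate
\[
\int_{G/\Gamma_n}\bigl|K_{I_n,\Gamma_n}(h^{-1}x,h^{-1}y)-\varphi_{\lambda_0}(x^{-1}y)\bigr|\,d\dot h\to 0,
\]
which already gives convergence in probability and is obtained by the same cutoff-and-pretrace argument as Theorem~\ref{t:shrinking window} with no extra ingredients: one writes $\sum_i\widehat F_n(\lambda_i)\phi_i(x)\phi_i(y)=\sum_\gamma F_n(x^{-1}\gamma y)$, truncates $F_n$ at radius $r_n$, isolates the $\gamma=e$ term as $\int\widehat F_n(\lambda)\varphi_\lambda(x^{-1}y)\,d\mu_{\rm Planch}$, and bounds the rest by BS-convergence. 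So the step you flag as the main obstacle is not harder than what is already in Section~\ref{s:convol}.
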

In particular the sequence of their common expected values $\nu_{\Gamma_n , I_n}$ weakly converges toward the Gaussian random wave $\mu_{{\rm Gauss} , s_0}$.

To prove Theorem \ref{T1} we will show that, in the weak limit, the second process $\beta_{\Gamma_n , I_n}$ is a
Dirac measure. Namely, $\lambda_{\Gamma_n , I_n}$ looks the same from most points, locally.
So, when we take expected value of the second process then we just have to erase the
Dirac symbol. To deal with the first process we shall use the ergodicity of the Gaussian random wave $\mu_{{\rm Gauss} , s_0}$.

\subsection{Covariance kernel}
We first fix $\Gamma$ and $I$ and compute the covariance kernel of the Gaussian process $(X^{\Gamma , I}_f)_{f \in \mathcal{D} (X)}$ associated to the measure $\lambda_{\Gamma , I} \in \mathbb{M}^1 (C^\infty (X))$.

\begin{lem} \label{L:cov}
Let $f,g \in \mathcal{D} (X)$. The covariance kernel is given by
$$\mathbb{E} ( X^{\Gamma , I}_f X^{\Gamma , I}_g ) = \int_{X \times X} K_{I , \Gamma}  (z, w)  f (z) g(w) dz dw$$
where 
$$K_{I, \Gamma} (z,w) = \frac{1}{k} \sum_{i=1}^k \phi_i (z) \otimes \phi_i (w).$$
\end{lem}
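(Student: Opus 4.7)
The plan is to unwind the definition of $\lambda_{\Gamma, I}$ and reduce the computation to the well-known moment identities for the uniform distribution on a sphere.

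First, I will recall from the paragraph preceding the lemma that $\lambda_{\Gamma, I}$ is the pushforward of the uniform probability measure on the unit sphere $S^{k-1} \subset \R^k$ under the linear map
$$\iota : \R^k \to C^\infty(X), \quad (c_1, \ldots, c_k) \mapsto \sum_{j=1}^k c_j \phi_j,$$
since the $\phi_i$'s are $L^2$-orthonormal on $\Gamma \backslash X$ and thus $\iota$ maps $S^{k-1}$ isometrically onto the unit sphere of $\mathrm{span}(\phi_1, \ldots, \phi_k)$. Equivalently, a $\lambda_{\Gamma, I}$-random function is of the form $\phi = \sum_{j=1}^k \xi_j \phi_j$ where $\xi = (\xi_1, \ldots, \xi_k)$ is uniformly distributed on $S^{k-1}$.

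Second, I will record the standard second-moment formula for the uniform measure on $S^{k-1}$: by orthogonal invariance (specifically by reflecting one coordinate and by permuting coordinates), one has $\mathbb{E}(\xi_i \xi_j) = 0$ for $i \neq j$, while $\sum_i \mathbb{E}(\xi_i^2) = \mathbb{E}|\xi|^2 = 1$ combined with the symmetry $\mathbb{E}(\xi_i^2) = \mathbb{E}(\xi_1^2)$ yields $\mathbb{E}(\xi_i^2) = 1/k$. Hence
$$\mathbb{E}(\xi_i \xi_j) = \frac{\delta_{ij}}{k}.$$

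Finally, since $X^{\Gamma, I}_f(\phi) = (f, \phi)_{L^2(X)} = \int_X f(z) \phi(z)\, dz$ (following the same convention as for the evaluation maps $X_f$ on $\mathcal{D}'(B)$ introduced earlier in the paper), I will apply Fubini's theorem — which is justified by the compact support of $f$ and $g$, the smoothness of the $\phi_i$, and the boundedness of the $\xi_j$ on $S^{k-1}$ — to obtain
$$\mathbb{E}(X^{\Gamma, I}_f X^{\Gamma, I}_g) = \sum_{i,j=1}^k \mathbb{E}(\xi_i \xi_j) \int_X f(z) \phi_i(z)\, dz \int_X g(w) \phi_j(w)\, dw = \frac{1}{k} \sum_{i=1}^k \int\int f(z) g(w) \phi_i(z) \phi_i(w)\, dz\, dw,$$
which is exactly the claimed identity. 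There is no real obstacle here; the only mild point to verify is that the normalization $1/k$ (rather than some other constant) is the correct one, which comes from the trace identity $\sum_i \mathbb{E}(\xi_i^2) = 1$ that fixes the scale.
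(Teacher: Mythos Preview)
Your proof is correct and follows essentially the same line as the paper's: expand the bilinear form, use $\mathbb{E}(\xi_i\xi_j)=\delta_{ij}/k$, and recombine into the kernel $K_{I,\Gamma}$. The only cosmetic difference is that you work with the uniform-on-sphere model and derive the second moments from rotational symmetry, whereas the paper integrates directly against the product Gaussian on $\R^k$ with variance $1/k$ per coordinate (which has the same second moments by construction); the two computations are line-for-line equivalent.
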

\begin{proof}
By definition of $\lambda_{\Gamma , I}$ we have:
\begin{align*}
\int_{\mathcal{E} (X)} T(f) T(g) d \lambda_{\Gamma , I} (T) & = \int_{\R^k} \left( \sum_{i=1}^k c_i (\phi_i ,  f )_{L^2 (X)} \right) \left( \sum_{i=1}^k c_i (\phi_i , g)_{L^2 (X)} \right)  dc  \\
\end{align*}
where $dc$ denotes the product of $k$ Gaussian measures on $\R$ with mean $0$ and variance $1/k$. In particular we have
$$ \int_{\R^k} c_i^2 \, dc = \frac1k$$
and $$  \int_{\R^k} c_i c_j \, dc = 0$$
if $i \neq j$. We thus have
\begin{align*}
\int_{\mathcal{E} (X)} T(f) T(g) d \lambda_{\Gamma , I} (T)
& = \frac{1}{k} \sum_{i=1}^k (\phi_i ,  f )_{L^2 (X)} (\phi_i , g)_{L^2 (X)} \\
& = \frac{1}{k} \sum_{i=1}^k \left( \int_{X} f(z) \phi_i (z) dz \right) \left( \int_{X} g(w) \phi_i (w) dw \right)  \\
& = \int_{X \times X} K_{I , \Gamma}  (z, w) f (z) g(w) dz dw.
\end{align*}
\end{proof}

\subsection{Asymptotics of $\beta_{\Gamma_n , I_n}$}
Now let $\Gamma_n $ be a uniformly discrete sequence of lattices in $G$ that BS-converges toward the trivial group and let $\{ \phi_1^{(n)} , \ldots , \phi_{k_n}^{(n)} \}$ be a normalized orthonormal basis of the subspace  of $C^\infty (X)^{\Gamma_n}$ spanned by eigenfunctions with eigenvalues in some interval $I_n = [s_0 - \delta_n , s_0 +\delta_n]$ with $\delta_n$ as in Conjecture \ref{C1}. Suppose furthermore that $\beta'$ in the definition of $\delta_n$ is sufficiently small. We first prove:

\begin{prop} \label{L:T1a}
The measures $\beta_{\Gamma_n , I_n}$ weakly converge toward the Dirac mass concentrated at the Gaussian random wave $\mu_{{\rm Gauss} , s_0} \in \mathbb{M}^1 (C^\infty (X))$.
\end{prop}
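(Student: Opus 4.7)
Each slice $(L_g)_\ast\lambda_{\Gamma_n,I_n}$ of $\beta_n$ is, by Lemma \ref{L:cov} combined with translation-invariance of the construction, a centered Gaussian probability measure on $C^\infty(X)$ with covariance kernel
$$K_g^{(n)}(z,w)=\frac{1}{k_n}\sum_{i=1}^{k_n}\phi_i^{(n)}(g^{-1}z)\phi_i^{(n)}(g^{-1}w),$$
and the target $\mu_{\mathrm{Gauss},\lambda_0}$ is the centered Gaussian with kernel $\varphi_{\lambda_0}(z^{-1}w)$ (Lemma \ref{l:covkernel}). Testing weak convergence against the characteristic-functional maps $F_f(\mu)=\int e^{iT(f)}d\mu(T)$ for $f\in\mathcal{D}(X)$ reduces $\beta_n\to\delta_{\mu_{\mathrm{Gauss},\lambda_0}}$ to showing that the random quantity
$$Q_n^f(g):=\iint K_g^{(n)}(z,w)f(z)f(w)\,dz\,dw \longrightarrow Q_\infty^f:=\iint \varphi_{\lambda_0}(z^{-1}w)f(z)f(w)\,dz\,dw$$
in probability with respect to the Haar measure on $G/\Gamma_n$; bounded convergence then passes the limit through $\exp(-Q_n^f/2)$. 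My plan is to establish the first-moment convergence $\mathbb{E}_g Q_n^f\to Q_\infty^f$ together with the variance estimate $\mathrm{Var}_g(Q_n^f)\to 0$.

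For the mean, I substitute $h=g^{-1}$ (unimodularity of $G$) so that as $g$ ranges over $G/\Gamma_n$, $h$ ranges over $\Gamma_n\backslash G$, and use that each $\phi_i^{(n)}$ realises the normalised $K$-fixed unit vector of an irreducible spherical principal series of parameter $\lambda_i^{(n)}$ inside $L^2(\Gamma_n\backslash G)$; the classical matrix-coefficient identity then yields $\int_{\Gamma_n\backslash G}\phi_i^{(n)}(hz)\phi_i^{(n)}(hw)\,d\mu(h)=\varphi_{\lambda_i^{(n)}}(z^{-1}w)$, whence $\mathbb{E}_g K_g^{(n)}(z,w)=\frac{1}{k_n}\sum_i\varphi_{\lambda_i^{(n)}}(z^{-1}w)\to\varphi_{\lambda_0}(z^{-1}w)$ locally uniformly by continuity of $\lambda\mapsto\varphi_\lambda$ and the shrinking of $I_n=[\lambda_0-\delta_n,\lambda_0+\delta_n]$. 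For the variance, expanding $(Q_n^f)^2$ and averaging produces a double sum over $(i,j)$ of four-point integrals
$$\Phi_{ij}(z,w,u,v):=\int_{\Gamma_n\backslash G}\phi_i(hz)\phi_i(hw)\phi_j(hu)\phi_j(hv)\,d\mu(h);$$
the ``Wick-paired'' piece $\varphi_{\lambda_i}(z^{-1}w)\varphi_{\lambda_j}(u^{-1}v)$ of $\Phi_{ij}$ resums exactly to $(\mathbb{E}_g Q_n^f)^2$ and cancels in the variance, leaving an off-diagonal residue that I control via the shrinking-window local Weyl law (Theorem~\ref{t:shrinking window}) applied to radial test functions built from $f\otimes f$, combined with the prefactor $1/k_n^2$ and the lower bound $k_n\asymp\delta_n\,\mathrm{vol}(\Gamma_n\backslash G)$ from Corollary~\ref{C21}.

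The principal obstacle is the off-diagonal four-point estimate: $\Phi_{ij}$ for $i\ne j$ is a matrix coefficient of $\pi_{\lambda_i}\otimes\pi_{\lambda_j}$, which does not enjoy a bi-$K$-invariant structure and so is not directly accessible through Theorem~\ref{t:shrinking window}. The natural route is to spectrally expand the pointwise product $\phi_i\phi_j$ in $L^2(\Gamma_n\backslash G)$ and apply the shrinking-window estimate to each component, with uniform discreteness providing the required $L^\infty$ control on the $\phi_i$. I expect Lemma~\ref{L:T1abis} (referenced in the exposition) to package precisely this analytic input; once variance vanishing is in hand, the reduction from the first paragraph closes the proof.
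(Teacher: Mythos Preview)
Your reduction to Gaussian covariance kernels is sound and matches the paper's starting point: since every slice $(L_g)_*\lambda_{\Gamma_n,I_n}$ and the target are centered Gaussians, it suffices to control the random covariance kernel $K_g^{(n)}(z,w)$. Your first-moment computation via the spherical matrix-coefficient identity is also correct.

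However, your expectation about Lemma~\ref{L:T1abis} is mistaken, and this leaves a genuine gap. That lemma does \emph{not} supply a four-point estimate; it bypasses the variance computation entirely. The paper's route is to prove directly that
\[
\int_{G/\Gamma_n}\bigl|K_g^{(n)}(z,w)-\varphi_{\lambda_0}(z^{-1}w)\bigr|\,d\dot g\longrightarrow 0
\]
uniformly for $(z,w)$ in compacts, which is $L^1$-convergence of the random kernel to the deterministic one --- strictly stronger than your ``convergence in probability of $Q_n^f$'' and obtained without ever touching $\Phi_{ij}$. The mechanism is the pretrace formula: for a suitable bi-$K$-invariant approximate cutoff $F_n$ one has $\sum_i\widehat F_n(\lambda_i)\phi_i(h^{-1}x)\phi_i(h^{-1}y)=\sum_{\gamma\in\Gamma_n}F_n(x^{-1}h\gamma h^{-1}y)$; the identity term $\gamma=e$ yields $\int\widehat F_n(\lambda)\varphi_\lambda(x^{-1}y)\,d\mu_{\mathrm{Planch}}$, while the $\gamma\ne e$ terms are supported where the injectivity radius at $h^{-1}y$ is small and are killed in $L^1$ by BS-convergence. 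Fernique's L\'evy-type continuity theorem then converts $L^1$-convergence of covariance kernels into the metric statement of Lemma~\ref{L:T1abis}, from which the Dirac limit follows by Markov's inequality.

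Your variance approach, by contrast, would require genuine control of $\int\phi_i(hz)\phi_i(hw)\phi_j(hu)\phi_j(hv)\,dh$, i.e.\ triple-product or $L^4$-type input that is not available from the shrinking-window Weyl law alone and is not what the paper provides. The spectral expansion of $\phi_i\phi_j$ you sketch would need uniform bounds on the resulting coefficients across the full spectrum of $\Gamma_n\backslash G$, which is substantially harder than anything in the paper. So the proposal as written has an unfilled gap at exactly the point you flag; the fix is not to fill it but to abandon the mean-plus-variance scheme in favor of the pretrace identity, which gives the needed convergence in one stroke.
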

\begin{proof} The space $\mathbb{M}^1 (C^\infty (X))$ equipped with the weak* topology is a metrizable space. Indeed the topology is induced by the L\'evy-Prokhorov metric (see for example \cite[p. 72]{Bil99}). Note that here $C^\infty (X)$ is equipped with the usual compact convergence topology which makes it a Polish space. We prove the following lemma below.

\begin{lem} \label{L:T1abis}
There exists a sequence $(\delta_n )$ of positive real numbers that converges to $0$ and satisfies the following property: 
given a distance $d$ on $\mathbb{M}^1 (C^\infty (X))$ that induces the weak* topology, the expected distance 
$$\int_{G/\Gamma_n} d((L_g)_* \lambda_{\Gamma_n , I_n} , \mu_{{\rm Gauss} , s_0}) d \dot g,$$  
between a random $G$-translate of $\lambda_{\Gamma_n , I_n}$ and $\mu_{{\rm Gauss}, s_0}$, tends to $0$ as $n$ tends to infinity. Here we indicate by a dot in $d\dot g$ that the Haar measure is normalized.
\end{lem}

Let us now prove that Lemma \ref{L:T1abis} implies Proposition \ref{L:T1a}. The weak convergence of $\beta_{\Gamma_n , I_n}$ to $\delta_{\mu_{{\rm Gauss} , s_0}}$ is equivalent to having for any open subset $U$ of $\mathbb{M}^1 (C^\infty (X))$
$$ \liminf_n \beta_{\Gamma_n , I_n} (U) \geq \delta_{\mu_{{\rm Gauss} , s_0}} (U).$$
Let $U$ be an open subset of $\mathbb{M}^1 (C^\infty (X))$. If $U$ does not contain $\mu_{{\rm Gauss} , s_0}$ we obviously have
$$\liminf_n \beta_{\Gamma_n , I_n} (U) \geq 0 = \delta_{\mu_{{\rm Gauss} , s_0}} (U).$$
Suppose now that $U$ contains $\mu_{{\rm Gauss} , s_0}$. Then $U$ contains a small open ball $B ( \mu_{{\rm Gauss} , s_0} , \eta )$. Let $\varepsilon$ be a positive real number. 
Since 
$$\int_{G/\Gamma_n} d((L_g)_* \lambda_{\Gamma_n , I_i} , \mu_{{\rm Gauss} , s_0}) d \dot g \to 0$$  
as $n \to +\infty$, there exists some positive integer $N_0$ such that for every $n \geq N_0$, we have:
$$\mathrm{vol} \left\{ g \in G/\Gamma_n \; | \; d((L_g)_* \lambda_{\Gamma_n , I_i} , \mu_{{\rm Gauss} , s_0}) \geq \eta \right\} \leq \varepsilon.$$
It follows that 
\begin{equation*}
\begin{split}
\beta_{\Gamma_n , I_n} (U) & = \mathrm{vol} \left\{ g \in G/\Gamma_n \; | \; (L_g)_* \lambda_{\Gamma_n , I_i}  \in U \right\} \\
& \geq \mathrm{vol} \left\{ g \in G/\Gamma_n \; | \; d((L_g)_* \lambda_{\Gamma_n , I_i} , \mu_{{\rm Gauss} , s_0}) < \eta \right\} \\
& \geq 1 -\varepsilon.
\end{split}
\end{equation*}
In other words, $\liminf_n \beta_{\Gamma_n , I_n} (U) \geq 1 = \delta_{\mu_{{\rm Gauss} , s_0}} (U)$ and the proposition follows.
\end{proof}

\subsection{Proof of Lemma \ref{L:T1abis}}
The measure $\mu_{{\rm Gauss}, s_0}$ and any given translate of $\lambda_{\Gamma_n , I_n}$ are Gaussian measures, it follows in particular from the Bochner-Minlos theorem that they are determined by their characteristic functional, or equivalently by their covariance kernel. 
Recall that the characteristic functional of a Gaussian measure $\mu$ on $C^\infty(X)$ is given by
$$ \hat \mu(f) = \exp \left( -\frac12 \iint_{X \times X} f(x) K(x,y) f(y) \, dx \, dy \right), $$
where $K$ is the covariance kernel of $\mu$ and $f\in C^\infty(X)$.

Moreover, Paul L\'evy continuity theorem for generalized random fields (due to Fernique \cite{Fernique}) implies that weak* convergence on measures corresponds to simple convergence of characteristic functionals or equivalently the topology of uniform convergence of the kernels on compact subset. 
As the kernel of $(L_g)_* \lambda_{\Gamma_n , I_n}$ is given by $K_{I_n,\Gamma_n}(g^{-1}x, g^{-1}y)$ (see Lemma \ref{L:cov}) and the kernel of $\mu_{{\rm Gauss}, s_0}$ is the spherical function $\varphi_{s_0}(x^{-1} y)$ (see Lemma \ref{l:covkernel}), to prove Lemma \ref{L:T1abis} it suffices to show that for any compact set $C$ in $X$, the average 
\begin{equation} \label{average}
\int_{G/\Gamma_n } \left| K_{I_n,\Gamma_n} (h^{-1} x, h^{-1} y) - \varphi_{s_0}(x^{-1} y) \right| \, d\dot h
\end{equation}
of the difference between the covariance kernels of the $G$-translates of $\lambda_{\Gamma_n , I_n}$ and the covariance kernel of the Gaussian wave associated with $s_0$, converges to $0$ with $n$ uniformly for $x$ and $y$ in $C$. 

Given a positive $\delta$ such that $I_\delta = [s_0 - \delta , s_0 + \delta ]$ is contained in the tempered spectrum, we first remark that for any test function $\widehat{F}$ on $\mathfrak{a}^*$ --- with $F : K\backslash G / K \to \R$ compactly supported --- we have
\begin{equation} \label{E:T1a1}
\int_{G/\Gamma_n} \left| \sum_{j} \widehat{F} (s_j ) \phi_j^{(n)} (h^{-1} x) \phi_j^{(n)} (h^{-1} y) - \int \widehat{F} (s) \varphi_s (x^{-1} y) \, d\mu_{\text{Planch}}(s) \right| \, d\dot{h} \to 0 ,
\end{equation}
when $n\to+\infty$. Here $(\phi_j^{(n)})_j$ is an orthonormal basis of $L^2 (\Gamma_n \backslash X)$ that consist of eigenfunctions whose eigenvalue corresponds to $s_j \in \mathfrak{a}^*$. 

The proof of \eqref{E:T1a1} goes as follows: first rewrite the kernel appearing on the left hand side of \eqref{E:T1a1} as 
\begin{equation} \label{E:2side}
\sum_{j}  \widehat{F} (s_j ) \phi_j^{(n)} (x) \phi_j^{(n)} (y) = \sum_{\gamma \in \Gamma_n} F (x^{-1} \gamma y).
\end{equation}
Now we separate the right-hand side of \eqref{E:2side} into two terms:
$$\sum_{\gamma \in \Gamma_n} F (x^{-1} h \gamma h^{-1} y)  =  F (x^{-1} y)  
 +  \sum_{\gamma \in \Gamma_n, \gamma\neq e} F (x^{-1} h\gamma h^{-1} y).$$
We then integrate both sides of \eqref{E:2side} over $G/\Gamma_n$. The right-hand side yields
\begin{multline*}
 \int_{G/\Gamma_n} \sum_{\gamma \in \Gamma_n} F (x^{-1} h \gamma h^{-1} y) \, d\dot{h} = \int_{G/\Gamma_n} F (x^{-1} y) \, d\dot{h} \\
+  \int_{G/\Gamma_n} \sum_{\gamma \in \Gamma_n, \gamma\neq e} F (x^{-1} h\gamma h^{-1} y) \, d\dot{h}.
\end{multline*}
The first term is just 
$$F (x^{-1} y) = \int \widehat{F} (s) \varphi_s (x^{-1} y) \, d\mu_{\text{Planch}}(s). $$
To conclude, note that in the second integral, we can restrict the integration to the set 
$$\{ h \in G/\Gamma_n : \text{InjRad}(h^{-1}y) \leq d(x,y) + r \},$$
where $\mathrm{supp} (F) \subset B_G (e , r)$. It then follows from BS convergence that the average, over $G/\Gamma_n$, of the module this second integral tends to $0$ as $n$ tends to infinity, see e.g. \cite{7samuraiCRAS} for more details. This concludes the proof of \eqref{E:T1a1}.

Sauvageot density principle \cite[Thm. 7.3(b)]{Sauvageot} (see also \cite[Prop. 6.4]{7samurai}) then implies that \eqref{E:T1a1} holds with $\widehat{F}$ replaced by the characteristic function of $I_\delta$. We then conclude the proof as in Lemma \ref{L:deltan} by defining the sequence $(\delta_n)$ so that $\delta_n$ is the infimum over all positive $\delta$ such that 
\begin{equation*}
\int_{G/\Gamma_n} \left| K_{I_\delta , \Gamma_n}(h^{-1}x,h^{-1}y) - \int_{I_\delta} \varphi_s (x^{-1} y) \, d\mu_{\text{Planch}}(s) \right| \, d\dot{h} \leq \delta
\end{equation*}
uniformly for $x$ and $y$ in $B_G ( e, \delta^{-1})$.
\qed

\subsection{Conclusion of the proof of Theorem \ref{T1}}
It remains to prove that the measures $\alpha_{\Gamma_n , I_n}$ weakly converge toward the Dirac mass concentrated at the Gaussian random wave $\mu_{{\rm Gauss} , s_0} \in \mathbb{M}^1 (C^\infty (X))$. We first want to extract a converging subsequence from $\alpha_{\Gamma_n , I_n}$. This is possible because measures of the form $\alpha_{\Gamma_n , I_n}$ are defined on a compact set of measures, and so they form a compact space.
\begin{lem}
The set of BS-samplings of superpositions of eigenfunctions with eigenvalues in a bounded interval $I$, such as in \eqref{Process2} is sequentially compact for the weak topology.
\end{lem}

\begin{proof}
Let $\mu_{\psi_n}$ be a sequence of BS-samplings of superpositions of $\Gamma_n$-invariant eigenfunctions, where $\Gamma_n$ is any sequence of co-compact subgroups. The measures $\mu_{\psi_n}$ are supported on $C^\infty(X)$, equipped with the usual compact convergence topology. In particular $C^\infty(X)$ is a Polish space and by Prokhorov theorem, $\mu_{\psi_n}$ is compact iff it is uniformly tight. We will therefore show that $\mu_{\psi_n}$ is uniformly tight.

Let $\epsilon > 0$. We need to find a compact set $K_\epsilon \subset C^\infty(X)$ such that
$$ \forall n \in \N \quad \mu_{\psi_n}( K_\epsilon) \geq 1- \epsilon. $$
Recall that $C^\infty(X)$ has the Heine-Borel property and the topology is defined by the family of semi-norms
$$ p_N(f) = \max \{ |D^\alpha f(x)| : x \in K_N , |\alpha| \leq N \}$$
where $K_N$ is an exhaustion by compact sets of $X$ and $\alpha$ is a multi-index (See for example \cite[Section 1.46]{Rud91}).
Therefore for any arbitrary sequence of non-negative real numbers $(C_N)_{N\in\N}$ the set
$$ K(C_N) = \{ f\in C^\infty(X) :  \forall N \in \N, \: p_N(f) \leq C_N \} $$
is compact.
We have
\begin{align*}
\mu_{\psi_n}(C^\infty(X) - K(C_N)) &= |\{ g \in \Gamma_n \backslash G \; : \; \psi_n(g^{-1} \cdot) \notin K(C_N) \} | \\
& = | \{ g\in \Gamma_n \backslash G \; : \; \exists N, \: p_N(\psi_n(g^{-1} \cdot)) > C_N \} |\\
&\leq \sum_N | \{ g\in \Gamma_n \backslash G \;  :  \; p_N(\psi_n(g^{-1} \cdot)) > C_N \}| \\
&\leq \sum_N \frac1{C_N} \int_{\Gamma_n \backslash G} p_N(\psi_n(g^{-1} \cdot)) \, d\dot g
\end{align*}
where $d\dot g$ is the normalized Haar measure on $\Gamma_n \backslash G$ and the last line is obtained from Markov's inequality. Now by a Sobolev embedding theorem \cite[Theorem 3.4]{Hebey}, for each $N$ there exists $M$ such that
$$p_N(\psi_n(g^{-1} \cdot)) \lesssim_N \sum_{|\alpha| \leq M} \int_{K_N} | D^\alpha \psi_n(g^{-1}x)|^2 dx $$

Because $\psi_n$ is a normalized superposition of eigenfunctions with eigenvalues bounded by a uniform value $\lambda > 0$ (upper bound of the spectral interval $I$) we have
$$|D^\alpha \psi_n| \leq \lambda^{ \frac12 |\alpha|} | \psi_n|$$
from which we deduce that
$$p_N(\psi_n(g^{-1} \cdot)) \lesssim_N M^d \lambda^{\frac{M}2} \int_{K_N} | \psi_n(g^{-1}x)|^2 dx. $$
Because of the normalization of the eigenfunctions we also have for any $x \in X$
$$\int_{\Gamma_n \backslash G} |\psi_n(g^{-1} x)|^2 \, d \dot g = 1,$$
so by Fubini's theorem we deduce
$$\int_{\Gamma_n \backslash G} p_N(\psi_n(g^{-1} \cdot)) \, d\dot g \leq C(I,N), $$
where $C(I,N)$ is a constant depending only on the interval $I$ and $N$. We can now take $C_N = \frac{2^{N+1} C(I,N)}{\epsilon}$ so that for all $n\in\N$
$$\mu_{\psi_n}(C^\infty(X) - K(C_N)) \leq \epsilon, $$
which shows the uniform tightness of $\{\mu_{\psi_n}\}$.
\end{proof}

Now let $\alpha$ be any weak limit of the sequence $\alpha_{\Gamma_n , I_n}$. The measure $\alpha$ is supported on the subset of $\mathbb{M}^1 (\mathcal{F} (X))$ that consists of $G$-invariant measures. 

The expected value of $\alpha_{\Gamma_n , I_n}$ is equal to the expected value of $\beta_{\Gamma_n , I_n}$. Since the latter is supported on a bounded set, the weak convergence of the sequence of measures $\beta_{\Gamma_n , I_n}$ (Lemma \ref{L:T1abis}) implies the convergence of the expected values. It follows that the sequence of expected values $\mathbb{E} (\alpha_{\Gamma_n , I_n})$ weakly converges toward the Gaussian random wave $\mu_{{\rm Gauss} , s_0} \in \mathbb{M}^1 (C^\infty (X))$. We conclude that 
$$\mathbb{E} (\alpha ) = \mu_{{\rm Gauss} , s_0}.$$
In particular the measure $\alpha$ is supported on smooth functions, and, since $\mu_{{\rm Gauss} , s_0}$ is ergodic, we conclude that $\alpha$ is equal to the Dirac mass on $\mu_{{\rm Gauss} , s_0}$. This proves Theorem~\ref{T1}.  

\section{Some problems}\label{s:problems}

In this section we list some open problems motivated by the Benjamini-Schramm viewpoint on quantum chaos. 
We do not repeat Conjectures \ref{BerryConj} and \ref{BC2} from the Introduction here. We try to choose the wildest possible interpretations to stimulate finding (counter)examples.

\bigskip

Our formulation of Berry's conjecture allows us to decompose the problem to
smaller ones and point out some baby cases to be attacked.

Let $M$ be a $d$ dimensional compact manifold. We call an invariant random function $F$ a \emph{Wigner wave for} $M$ if there exists a sequence of
($L^2$-normalized) eigenvectors $(\phi _{n},\lambda _{n})$ with $\lambda _{n}\rightarrow \infty 
$ such that $(M,\phi _{n},\lambda _{n})$ Benjamini-Schramm converges to $F$.
Berry's conjecture says that when $M$ has negative curvature, the only
Wigner wave for $M$ is the Gaussian monochromatic wave.

Let us define the \emph{invariant sine wave} on $\mathbf{R}^{2}=\mathbf{C}$
by 
\[
\mathrm{IS}_{a,\varepsilon }(z)=\sin (\left\langle \varepsilon
,z\right\rangle + a) 
\]%
where $\varepsilon $ is uniform random in the unit circle and $a$ is uniform
random on $[0,2\pi ]$. That is, $\mathrm{IS}$ is a random translation of a
random rotation of $\sin $. Note that $\mathrm{IS}(z)$ is equal to the
Benjamini-Schramm sample (a random lift) of the $\sin $ function on the
standard torus.

The following is a baby case of Berry's conjecture. It may serve as a first
step to understand the role of negative curvature in the picture.

\begin{problem}
Let $M$ be a compact negatively curved surface. Show that $\mathrm{IS}$ is
not a Wigner wave for $M$.
\end{problem}

This roughly translates to saying that $M$ does not admit high energy
eigenfunctions that locally, at most points, look like the sine wave. 

\medskip

The following is a weak version of Conjecture \ref{BerryConj}. 

\begin{problem}
Let $M$ be a compact manifold. Is the monochromatic Gaussian eigenwave
a Wigner wave for $M$? 
\end{problem}

A first attempt would be to try and analyze the behavior of a random
eigenfunction in a shrinking window, opposed to a Gaussian random sum of
these eigenfunctions, which in the level aspect is settled in Theorem \ref{T1}. 

\medskip

Our next problem addresses the conservation of energy in Quantum Ergodicity.
A priori, it can happen that the distribution of values loses mass at
infinity. As we show in Proposition \ref{P15}, assuming that this will not happen implies that
any limit of the square measures is absolutely continuous with respect to
volume. Note that the other implication is not clear, as the high energy
places could very well equidistribute enough to admit volume as a weak limit.

\begin{problem}
Is there a Wigner wave for a negatively curved compact manifold with energy $<1$?
\end{problem}

\medskip

To conclude this section, we believe that the Benjamini-Schramm viewpoint should also be useful to study nodal
domains. This study has attracted a lot of research recently (see e.g. \cite{GW,NS,NaliniBourbaki,Sarnak}). Recall that if $M$ is a manifold, then the zero set of an
eigenfunction of $M$ cuts $M$ into pieces called nodal domains. A very general (and vague) problem is to 
analyze the shape and number of nodal domains for Benjamini-Schramm
convergent sequences of manifolds. 

We do not expect a straight continuity result here, that is, nodal
domains will not be entirely local. However \cite[Proposition 2]{Ingremeau} already shows that Benjamini-Schramm convergence can be used to give lower bounds on the number of nodal
domains of a family of eigenfunctions.

\end{document}